\documentclass[10pt]{article}

\usepackage[letterpaper,textwidth=6.75in,textheight=9.0in]{geometry}

\usepackage{times}

\usepackage{amsmath,amssymb,amsfonts,amsthm}
\usepackage{graphicx}
\usepackage{subcaption}
\usepackage{float}
\usepackage{xcolor}
\usepackage[numbers,square,sort&compress]{natbib}
\usepackage{hyperref}
\hypersetup{colorlinks,citecolor=blue,linkcolor=blue,breaklinks=true}
\usepackage{booktabs}
\usepackage{colortbl}
\usepackage{caption}
\usepackage{enumitem}
\usepackage{algorithm}
\usepackage{algpseudocode}
\usepackage{array}
\usepackage{bbding}
\usepackage{fancyhdr} 
\usepackage{fancyvrb} 
\usepackage{longtable}
\usepackage{listings}
\usepackage{rotating,rotfloat} 
\usepackage{yhmath} 
\usepackage{tikz}
\usetikzlibrary{3d, positioning, shapes.geometric, arrows.meta, calc, shadows, backgrounds, fit, calc}
\usepackage{multirow, bigstrut}
\usepackage{adjustbox} 
\usepackage{tabularx}
\usepackage{mathtools}

\allowdisplaybreaks

\theoremstyle{plain}
\newtheorem{theorem}{Theorem}[section]
\newtheorem{lemma}[theorem]{Lemma}

\newtheorem{assumption}[theorem]{Assumption}
\theoremstyle{definition}
\newtheorem{definition}[theorem]{Definition}
\newtheorem{remark}[theorem]{Remark}

\newcommand{\paren}[1]{\left(#1\right)}

\newcommand{\mc}[1]{\mathcal{#1}}

\newcommand{\bm}[1]{\boldsymbol{#1}}
\newcommand{\abs}[1]{\left\lvert #1 \right\rvert}
\newcommand{\norm}[1]{\left\lVert #1 \right\rVert}

\newcommand{\R}{\mathbb{R}}

\newcommand{\Z}{\mathbb{Z}}
\newcommand{\T}{\mathbb{T}}

\title{Principal-Part Decomposition for Neural Operator Learning of Dirichlet-to-Neumann Maps}

\author{%
\begin{minipage}{0.95\textwidth}
\centering
Shuo Ling\textsuperscript{1} \quad
Wenjun Ying\textsuperscript{2} \quad
Han Zhou\textsuperscript{3,*}\\[0.35em]
\small
\href{mailto:lingshuo1@sjtu.edu.cn}{lingshuo1@sjtu.edu.cn} \quad
\href{mailto:wying@sjtu.edu.cn}{wying@sjtu.edu.cn} \quad
\href{mailto:hzhou24@sas.upenn.edu}{hzhou24@sas.upenn.edu}\\[0.6em]
\textsuperscript{1}School of Mathematical Sciences, Shanghai Jiao Tong University, Minhang, Shanghai 200240, P.R. China\\
\textsuperscript{2}School of Mathematical Sciences, MOE-LSC and Institute of Natural Sciences, Shanghai Jiao Tong University, Minhang, Shanghai 200240, P.R. China\\
\textsuperscript{3}Department of Mathematics, University of Pennsylvania, Philadelphia, Pennsylvania, 19104, USA\\[0.35em]
\textsuperscript{*}Corresponding author.
\end{minipage}%
}

\date{\today}

\begin{document}

\maketitle

\begin{abstract}
Dirichlet-to-Neumann (DtN) maps send boundary values of a partial differential equation (PDE) solution to its normal derivative on the boundary. Learning such maps across varying domains is important for boundary-value problems, but a black-box neural operator must model both the operator's non-smoothing principal behavior and its dependence on boundary geometry. We use the boundary integral representation of the DtN map to obtain, for smooth planar boundaries, a useful principal-part decomposition: a geometry-independent leading operator can be written as a universal Fourier multiplier, while the remaining geometry-dependent correction is smoother. We propose Principal-Part Decomposed Neural Operators (PPDNO), a hybrid analytic-neural framework that turns this decomposition into a geometry-conditioned operator learning model. PPDNO computes the principal part by FFT and trains a low-rank Deep Operator Network (DeepONet)-type architecture to approximate only the residual correction across families of boundary geometries. This design keeps the exact linear action on the boundary data, exposes the sampled boundary as an input to the model, and turns the learned target into a smoother operator family. We justify the decomposition theoretically by proving smoothing and separated-approximation properties of the residual, and we derive finite-node and training-error bounds for the reconstructed full map. Experiments on interior Laplace problems over elliptical and Fourier-parameterized domains, and on exterior Helmholtz problems over rose curves, show that PPDNO improves accuracy over direct neural operator baselines while adding little inference overhead and generalizing to unseen boundary data. These results suggest that analytic operator structure and geometry-conditioned learning can be combined effectively for boundary solution maps.
\end{abstract}

\section{Introduction}

Boundary response operators, notably the Dirichlet-to-Neumann (DtN) map, are central to scientific computing applications such as electrical impedance tomography \cite{calderon2006inverse,sylvester1987global}, domain decomposition \cite{toselli2004domain}, and exact non-reflecting boundary conditions for wave scattering \cite{keller1989exact}. While classical boundary integral methods evaluate DtN maps accurately, assembling and solving dense linear systems for every new geometry is computationally expensive. Therefore, this paper focuses on operator learning for DtN maps: efficiently predicting the Neumann trace across varying planar domains given sampled boundary geometries and Dirichlet data.

Beyond using neural networks to solve specific PDE instances \cite{raissi2019physics,yu2018deep,sirignano2018dgm,yang2021local,wu2026iterative}, neural operators offer a compelling data-driven alternative to bypass these bottlenecks by learning mappings between infinite-dimensional function spaces. Architectures including deep operator networks (DeepONet) \cite{lu2021learning}, Fourier neural operators (FNO) \cite{li2021fourier}, graph-based learners \cite{anandkumar2020neural,lotzsch2022learning,kovachki2023neural}, and transformers \cite{cao2021choose,li2023transformer} have successfully approximated PDE solutions, with recent extensions handling irregular geometries \cite{li2023geo,wang2024beno,10535080,MELCHERS2026118893}. However, these approaches typically treat the target boundary operator as a generic black box, introducing a severe mismatch with the underlying analytic structure of DtN maps.

Analytically, a DtN map comprises a singular principal part and a smoother, geometry-dependent correction. Purely data-driven black-box models struggle to learn both concurrently, causing poor generalization \cite{rahaman2019spectral}. To resolve this, we analytically decouple the map via its boundary integral representation:
\[
    \breve\Lambda_{\boldsymbol X} f = 2\mathcal W_0 f + \mathcal R_{\boldsymbol X} f.
\]
Here, the universal principal part $\mathcal W_0$ is a Fourier multiplier ($|k|/2$) computed exactly via fast Fourier transform (FFT), leaving only the smooth residual $\mathcal R_{\boldsymbol X}$ to be learned. 

To realize this, we propose \textbf{Principal-Part Decomposed Neural Operators (PPDNO)}, a hybrid architecture illustrated in Figure \ref{fig0}. PPDNO isolates the exact FFT multiplier and approximates $\mathcal R_{\boldsymbol X}$ via a geometry-aware low-rank network. By mapping the input boundary shape into kernel basis coefficients that act strictly linearly on the Dirichlet data, PPDNO restricts geometric complexity entirely to the smooth correction. This strict preservation of the analytic linear structure disentangles the domain shape from high-frequency responses, enabling highly accurate zero-shot generalization to unseen and out-of-distribution boundary data.

\begin{figure}[t]
\centering
\resizebox{0.8\textwidth}{!}{
\begin{tikzpicture}[
    node distance=1.2cm and 2.2cm,
    box/.style={draw=black!70, rounded corners=4pt, inner sep=10pt, align=center, thick, fill=#1},
    arrow/.style={-stealth, thick, draw=black!70}
]
\definecolor{boxblue}{HTML}{E8F0FE}
\definecolor{boxgreen}{HTML}{E6F4EA}
\definecolor{boxorange}{HTML}{FCE8E6}
\definecolor{boxpurple}{HTML}{F3E8FF}

\node[box=boxblue] (inputF) {Boundary Data $f$ \\ \small (Dirichlet Condition)};
\node[box=boxblue, below=1.2cm of inputF] (inputX) {Geometry $\boldsymbol{X}$ \\ \small (Sampled Shape)};

\node[box=boxgreen, right=2.5cm of inputF] (fft) {\textbf{Analytic Pathway} \\ Exact FFT Multiplier \\ $\mathcal{F}^{-1}(|k|/2)\mathcal{F}$};
\node[box=boxorange, right=2.5cm of inputX] (neural) {\textbf{Neural Pathway} \\ Geometry-Aware \\ Low-Rank Operator};

\node[right=2cm of fft] (part1) {\Large $2\mathcal{W}_0 f$};
\node[right=2cm of neural] (part2) {\Large $\mathcal{R}_{\boldsymbol{X}} f$};

\node[circle, draw=black!70, thick, fill=gray!10, right=1.2cm of part1, yshift=-1.1cm] (plus) {\Large $+$};

\node[box=boxpurple, right=1.2cm of plus] (output) {Neumann Trace \\ $\breve\Lambda_{\boldsymbol{X}} f$};

\draw[arrow] (inputF) -- (fft);
\draw[arrow] (inputF.east) -- ++(0.6,0) |- (neural.165);
\draw[arrow] (inputX.east) -- (neural.180);

\draw[arrow] (fft) -- (part1);
\draw[arrow] (neural) -- (part2);

\draw[arrow] (part1.east) -| (plus.north);
\draw[arrow] (part2.east) -| (plus.south);

\draw[arrow] (plus) -- (output);

\node[above=0.1cm of part1] {\small Singular Principal Part};
\node[below=0.1cm of part2] {\small Smooth Residual};
\node[above=0.2cm of fft, font=\small\color{black!60}] {\textit{Geometry-Independent}};
\node[below=0.2cm of neural, font=\small\color{black!60}] {\textit{Geometry-Dependent}};

\end{tikzpicture}
}
\caption{Conceptual overview of the Principal-Part Decomposed Neural Operator (PPDNO). Unlike black-box models, PPDNO analytically decouples the DtN map. The singular, geometry-independent principal part is computed exactly via FFT, leaving the neural network to approximate only the smoother, geometry-aware residual correction.}
\label{fig0}
\end{figure}

In summary, our main contributions are threefold: (i) we introduce a novel operator learning paradigm that leverages the analytic principal-part decomposition of DtN maps, computing the universal, geometry-independent Fourier multiplier exactly via FFT while modeling the remaining geometry-dependent smooth residual with a low-rank network; (ii) we provide a rigorous theoretical analysis establishing the smoothing and separated-approximation properties of the residual, alongside finite-node and training-error bounds for the fully reconstructed map; and (iii) we validate our approach on interior Laplace and exterior Helmholtz problems, demonstrating that PPDNO significantly outperforms standard black-box baselines in accuracy and zero-shot generalization with only a minor inference overhead.

\section{Related Work} 

\paragraph{Neural Operators for PDEs.}
Neural operators learn mappings between function spaces, bypassing grid-dependent limitations. Architectures like DeepONet \cite{lu2021learning,MIONet,Cai_2021}, kernel-based operators with graph, low-rank, or spectral representations \cite{anandkumar2020neural,kovachki2023neural,li2021fourier,tran2023factorized,rahman2022u}, and attention-based models \cite{cao2021choose,li2023transformer} provide powerful baselines. However, they are implicitly optimized for smoothing maps. Our target, a boundary operator with an explicit order-one singularity, requires an architectural decomposition to isolate the non-smoothing principal symbol before training.

\paragraph{Operator Learning on Complex Geometries.}
Extending neural operators to varying domains \cite{GNOT,serrano2023operator,seidman2022nomad,yin2024scalable} often relies on coordinate transformations onto latent grids \cite{li2023geo} or unstructured graph/point-cloud encoders \cite{lotzsch2022learning,wang2024beno,brandstetter2022message}. While these geometry-conditioned frameworks primarily predict volumetric interior fields across domains, PPDNO targets the boundary response map directly. By embedding the analytic DtN decomposition, geometric features condition only the smoother residual operator, avoiding learning high-frequency boundary interactions from scratch.

\paragraph{Physics-Informed and Boundary Integral Neural Networks.}
Incorporating physical information and laws \cite{wang2021learning,gin2021deepgreen,li2021physics} regularizes operator learning. More closely related to our work are frameworks reformulating boundary value problems via boundary integral equations \cite{10535080,GU2026119058,CSIAM-AM-4-275,lin2023bi,MELCHERS2026118893}. While classical theory heavily studies the logarithmic and hypersingular kernels governing DtN maps \cite{kress1989linear,mclean2000strongly,hsiao2022boundary,sauter2010boundary}, we advance this principle by deploying boundary integrals not merely as alternative solvers or regularizers, but as an explicit architectural decomposition that cleanly extracts the universal principal singularity to drastically simplify the learning landscape.

\section{Background}
We introduce the notation using the interior Laplace equation; extensions to other problems are discussed in Section~\ref{OtherPDEs}.

\subsection{Dirichlet-to-Neumann Maps}
Let $\Omega \subset \mathbb{R}^2$ be a bounded domain with smooth boundary $\Gamma=\partial\Omega$, and let $\boldsymbol{n}$ denote the outward unit normal. Given Dirichlet data $g\in H^{1/2}(\Gamma)$, consider
\begin{equation} \label{eq1}
\begin{cases}
\Delta u = 0, & \text{in } \Omega, \\
u = g, & \text{on } \partial\Omega .
\end{cases}
\end{equation}
The corresponding Dirichlet-to-Neumann (DtN) map is
\[
\Lambda:H^{1/2}(\Gamma)\to H^{-1/2}(\Gamma),
\]
defined by
\begin{equation} \label{eq2}
\Lambda[g]=\partial_{\boldsymbol{n}} u .
\end{equation}

\subsection{Boundary Integral Equations}
The DtN map can also be expressed through boundary integral operators. Let $G(\boldsymbol{x})$ be the two-dimensional Green's function, whose detailed definition can be found in \ref{green's func1}. Using an indirect double-layer representation with density $\varphi$,
\begin{equation} \label{eq4}
u(\boldsymbol{x})=\int_\Gamma
\frac{\partial G(\boldsymbol{x}-\boldsymbol{y})}{\partial \boldsymbol{n}_{\boldsymbol{y}}}
\varphi(\boldsymbol{y})\,ds_{\boldsymbol{y}},
\qquad \boldsymbol{x}\in\Omega .
\end{equation}
The jump relation yields the boundary integral equation (BIE)
\begin{equation} \label{eq5}
\frac12\varphi(\boldsymbol{x})+
\int_\Gamma
\frac{\partial G(\boldsymbol{x}-\boldsymbol{y})}{\partial \boldsymbol{n}_{\boldsymbol{y}}}
\varphi(\boldsymbol{y})\,ds_{\boldsymbol{y}}
=g(\boldsymbol{x}),
\qquad \boldsymbol{x}\in\Gamma .
\end{equation}
The Neumann trace is given by the hypersingular operator
\begin{equation} \label{eq6}
\partial_{\boldsymbol{n}} u(\boldsymbol{x})
=
\frac{\partial}{\partial \boldsymbol{n}_{\boldsymbol{x}}}
\int_\Gamma
\frac{\partial G(\boldsymbol{x}-\boldsymbol{y})}{\partial \boldsymbol{n}_{\boldsymbol{y}}}
\varphi(\boldsymbol{y})\,ds_{\boldsymbol{y}}
=
\frac{\partial}{\partial \boldsymbol{\tau}_{\boldsymbol{x}}}
\int_\Gamma
G(\boldsymbol{x}-\boldsymbol{y})
\frac{\partial\varphi}{\partial \boldsymbol{\tau}_{\boldsymbol{y}}}(\boldsymbol{y})
\,ds_{\boldsymbol{y}},
\qquad \boldsymbol{x}\in\Gamma .
\end{equation}
Equations~\eqref{eq5}--\eqref{eq6} provide the boundary-integral form of the DtN map and reveal the singular structure used in our principal-part decomposition.

\section{Proposed Method} \label{sec:method}
We introduce Principal-Part Decomposed Neural Operators (PPDNO), a hybrid analytic-neural framework for learning Dirichlet-to-Neumann (DtN) maps. The central idea is to analytically extract the universal singular principal part of the DtN operator and learn only the geometry-dependent smooth remainder.

\subsection{Decomposition of the DtN Maps} \label{Decomposition of DtN}
Let the boundary be parameterized by a smooth closed curve $\boldsymbol{X}:\T\to\mathbb{R}^2$, where $\T=\mathbb{R}/(2\pi\mathbb{Z})$. We write $\breve g=g\circ \boldsymbol{X}$, $\breve \varphi=\varphi\circ \boldsymbol{X}$, $\breve \Lambda_{\boldsymbol{X}}=\Lambda_{\boldsymbol{X}} \circ \boldsymbol{X}$ and $\breve\psi=(\partial_{\boldsymbol n}u\circ\boldsymbol{X})|\boldsymbol{X}'|$ as the parameterized Neumann trace. The BIE \eqref{eq5} gives
\begin{equation} \label{eq11}
(\mathcal{I}+2\mathcal{K})\breve\varphi=2\breve g,
\qquad
\breve\psi=\mathcal{W}\breve\varphi:=\breve\Lambda_{\boldsymbol{X}}\breve g ,
\end{equation}
where $\mathcal{I}$ denote the identity map and $\mathcal{K}$ is the double-layer potential obtained from \eqref{eq4} after parameterizing the boundary curve by $\boldsymbol{X}(t)$. A direct calculation, detailed in Appendix~\ref{app:dtn_decomposition_details}, yields
\begin{equation}
\mathcal{W}f(s)
=
\frac{1}{2\pi}\partial_s
\int_{\T}
\log |\boldsymbol{X}(s)-\boldsymbol{X}(t)|
\,\partial_t f(t)\,dt .
\end{equation}
Splitting the logarithmic kernel into its universal singular part and a geometry-dependent smooth part gives
\begin{equation}
\mathcal{W}=\mathcal{W}_0+\mathcal{W}_1,
\qquad
\mathcal{W}_0 f(s)
=
\frac{1}{2\pi}\partial_s
\int_{\T}
\log\left|2\sin\frac{s-t}{2}\right|
\,\partial_t f(t)\,dt .
\end{equation}
The operator $\mathcal{W}_0$ is independent of the geometry and is diagonal in Fourier space:
\begin{equation} \label{eq19}
\mathcal{W}_0f=\mathcal{F}^{-1}\widehat W_0(k)\mathcal{F}f,
\qquad
\widehat W_0(k)=\frac{|k|}{2},
\quad k\in\mathbb Z .
\end{equation}
Using \eqref{eq11}, we obtain
\begin{equation} \label{eq20}
\breve\psi=\mathcal{W}\breve\varphi
=
2\mathcal{W}(\breve g-\mathcal{K}\breve\varphi)
=
2\mathcal{W}_0\breve g+2\mathcal{W}_1\breve g-4\mathcal{W}\mathcal{K}(\mathcal{I}+2\mathcal{K})^{-1}\breve g.
\end{equation}
Since $\mathcal{K}$ and $\mathcal{W}_1$ have smooth kernels for smooth boundaries, the last two terms form a smoothing remainder. This motivates the decomposition
\begin{equation} \label{eq21}
\breve\Lambda_{\boldsymbol{X}} f
=
2\mathcal{W}_0 f
+
\mathcal{R}_{\boldsymbol{X}}f .
\end{equation}
Thus the singular principal part is computed analytically by FFT, while the neural network only approximates the smooth residual operator $\mathcal{R}_{\boldsymbol{X}}$. The smoothing statement is summarized in Theorem~\ref{thm:pdo_remainder} and proved in Appendix~\ref{app:theory_proofs}.

\subsection{Neural Network Approximation} \label{NN}
We model the residual $\mathcal{R}_{\boldsymbol{X}}f$ as a geometry-dependent smooth integral operator with low-rank decomposition
\begin{equation} \label{eqNN1}
\mathcal{R}_{\boldsymbol{X}} f(s)
=
\int_{\T} R(s,t;\boldsymbol{X}) f(t)\,dt,
\qquad
R(s,t;\boldsymbol{X})
\approx
\sum_{k=1}^{r} a_k(s) b_k(t;\boldsymbol{X}).
\end{equation}
Let $\{t_j\}_{j=1}^N$ be sensor points,
$\boldsymbol{f}=(f(t_1),\ldots,f(t_N))^\top\in\mathbb{R}^N$, and
$z_{\boldsymbol{X}}=(\boldsymbol{X}(t_1);\ldots;\boldsymbol{X}(t_N))\in\mathbb{R}^{N\times 2}$.
Applying quadrature to the separated kernel gives
\[
\mathcal{R}_{\boldsymbol{X}} f(s)
\approx
\sum_{k=1}^r
a_k(s)
\sum_{j=1}^N
\omega_j b_k(t_j;z_{\boldsymbol{X}}) f(s_j).
\]
The factor depending on the integration variable is discretized as a geometry-dependent vector
\[
\boldsymbol{\beta}_k(z_{\boldsymbol{X}})
:=
\big(
\omega_1 b_k(t_1;z_{\boldsymbol{X}}),
\ldots,
\omega_N b_k(t_N;z_{\boldsymbol{X}})
\big)^\top
\in \mathbb{R}^N,
\]
so that its action on the Dirichlet data is
$\boldsymbol{f}^{\top}\boldsymbol{\beta}_k(z_{\boldsymbol{X}})$. Then
\begin{equation} \label{eqNN5}
\mathcal{R}_{\boldsymbol{X}} f(s)
\approx
\sum_{k=1}^r
a_k(s)\,
\boldsymbol{f}^{\top}\boldsymbol{\beta}_k(z_{\boldsymbol{X}}).
\end{equation}

The network mirrors \eqref{eqNN5}. An evaluation-point network
$\mathrm{MLP}_{s}:\mathbb{R}\to\mathbb{R}^r$ outputs
$a(s)=(a_1(s),\ldots,a_r(s))^\top$, while a geometry network
$\mathrm{MLP}_{\boldsymbol{X}}:\mathbb{R}^{2N}\to\mathbb{R}^{rN}$ maps the flattened boundary representation to a matrix in $\mathbb{R}^{r\times N}$ whose rows are
$\boldsymbol{\beta}_k(z_{\boldsymbol{X}})^\top$. The final prediction is
\begin{equation} \label{eqNN6}
\mathcal{N}_r(z_{\boldsymbol{X}},\boldsymbol{f},s;\theta)
=
\big(\mathrm{MLP}_{s}(s)\big)^\top
\big(\mathrm{MLP}_{\boldsymbol{X}}(z_{\boldsymbol{X}})\boldsymbol{f}\big),
\end{equation}
where $\theta$ contains the parameters of both subnetworks. Figure~\ref{fig:network} illustrates the architecture. This architecture explicitly encodes the low-rank separable structure of the smooth DtN residual kernel and yields an efficient approximation of the parametric boundary integral operator. In practice, batches of geometries, boundary data, and evaluation points are processed simultaneously by vectorized matrix operations, which is equivalent to repeated application of \eqref{eqNN6}.

\begin{figure}[htbp]
\centering
\resizebox{0.85\textwidth}{!}{
\begin{tikzpicture}[
    define color/.code={
        \definecolor{tBlue}{HTML}{E8F0FE}
        \definecolor{tBlueBorder}{HTML}{1A73E8}
        \definecolor{tOrange}{HTML}{FCE8E6}
        \definecolor{tOrangeBorder}{HTML}{D93025}
        \definecolor{tGreen}{HTML}{E6F4EA}
        \definecolor{tGreenBorder}{HTML}{1E8E3E}
        \definecolor{tYellow}{HTML}{FEF7E0}
        \definecolor{tYellowBorder}{HTML}{F9AB00}
        \definecolor{tPurple}{HTML}{F3E8FF}
        \definecolor{tPurpleBorder}{HTML}{9333EA}
        \definecolor{tGrayBorder}{HTML}{5F6368}
    },
    define color,
    mlp/.style={
        rectangle, draw=tOrangeBorder, fill=tOrange, thick,
        rounded corners=3pt,
        minimum height=1.25cm, minimum width=1.55cm,
        drop shadow={opacity=0.12, shadow xshift=0.04cm, shadow yshift=-0.04cm}
    },
    matrixTex/.style={
        rectangle, draw=tBlueBorder, fill=tBlue, thick,
        rounded corners=2pt,
        minimum width=1.75cm, minimum height=1.35cm,
        drop shadow={opacity=0.12, shadow xshift=0.04cm, shadow yshift=-0.04cm}
    },
    vectorTex/.style={
        rectangle, draw=tGreenBorder, fill=tGreen, thick,
        rounded corners=2pt,
        minimum width=0.52cm, minimum height=1.35cm,
        drop shadow={opacity=0.12, shadow xshift=0.04cm, shadow yshift=-0.04cm}
    },
    op/.style={
        circle, draw=tYellowBorder, fill=tYellow, thick,
        inner sep=0pt, minimum size=0.65cm, font=\large\bfseries,
        drop shadow={opacity=0.12, shadow xshift=0.04cm, shadow yshift=-0.04cm}
    },
    input/.style={align=center, font=\small},
    imgbox/.style={
        rectangle, draw=#1, fill=white, thick,
        rounded corners=4pt, inner sep=2pt, align=center,
        drop shadow={opacity=0.1, shadow xshift=0.04cm, shadow yshift=-0.04cm}
    },
    arrow/.style={-{Stealth[scale=1.15]}, thick, draw=tGrayBorder, rounded corners=4pt},
    line/.style={thick, draw=tGrayBorder, rounded corners=4pt}
]

    \tikzset{
        mlpicon/.pic={
            \foreach \ya in {-0.22,0,0.22} {
                \foreach \yb in {-0.30,-0.10,0.10,0.30} {
                    \draw[tOrangeBorder!35, line width=0.25pt] (-0.46,\ya) -- (0,\yb);
                    \draw[tOrangeBorder!35, line width=0.25pt] (0,\yb) -- (0.46,\ya);
                }
            }
            \foreach \y in {-0.22,0,0.22}
                \filldraw[fill=white, draw=tOrangeBorder, line width=0.35pt] (-0.46,\y) circle (1.35pt);
            \foreach \y in {-0.30,-0.10,0.10,0.30}
                \filldraw[fill=white, draw=tOrangeBorder, line width=0.35pt] (0,\y) circle (1.35pt);
            \foreach \y in {-0.22,0,0.22}
                \filldraw[fill=white, draw=tOrangeBorder, line width=0.35pt] (0.46,\y) circle (1.35pt);
        }
    }

    \node[imgbox=tBlueBorder] (Z_img) at (0, 3.8)
        {\includegraphics[width=1.6cm]{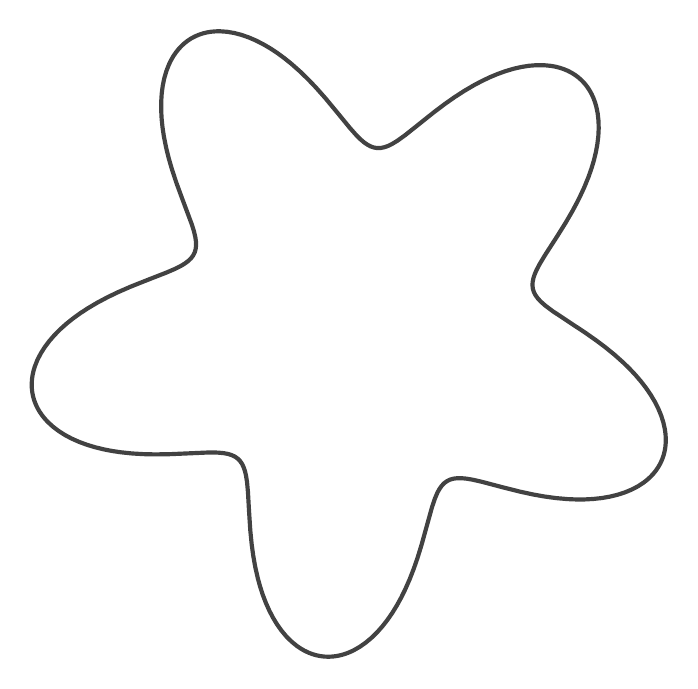}};
    \node[input, below=0.08cm of Z_img] (Z)
        {$z_{\boldsymbol{X}} \in \mathbb{R}^{N \times 2}$ \\ \textcolor{black!50}{\scriptsize Geometry}};

    \node[imgbox=tGreenBorder] (f_img) at (0, 0.45)
        {\includegraphics[width=1.6cm]{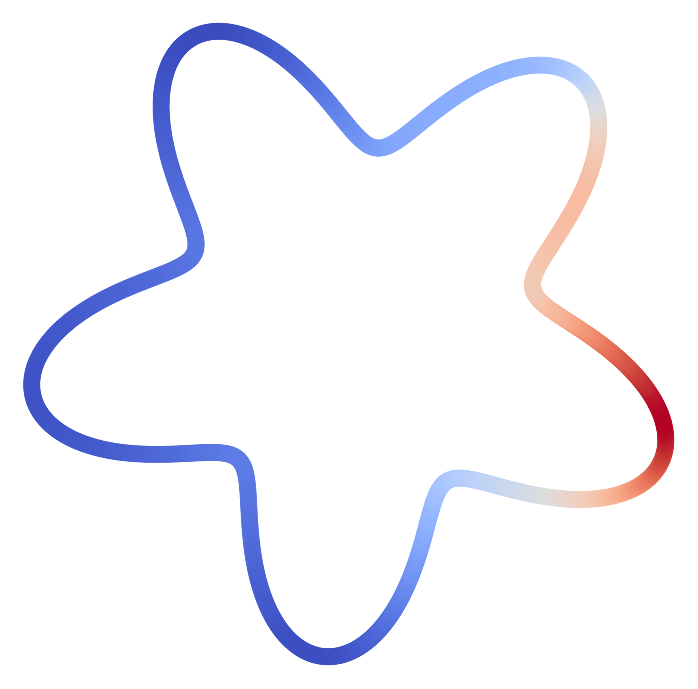}};
    \node[input, below=0.08cm of f_img] (f)
        {$\boldsymbol{f} \in \mathbb{R}^N$ \\ \textcolor{black!50}{\scriptsize Dirichlet Boundary Data}};

    \node[input] (s) at (0, -2.5)
        {$s$ \\ \textcolor{black!50}{\scriptsize Evaluation Point}};

    \node[mlp] (mlpX) at (3.2, 3.8) {};
    \node[font=\scriptsize\bfseries, text=tOrangeBorder] at ($(mlpX.north)+(0,-0.18)$)
        {$\mathrm{MLP}_{\boldsymbol{X}}$};
    \pic at ($(mlpX.center)+(0,-0.15)$) {mlpicon};

    \node[matrixTex] (C) at (6.2, 3.8) {};
    \foreach \yy in {0.18,0.34,0.50,0.66,0.82} {
        \draw[tBlueBorder!35, line width=0.35pt]
            ($(C.south west)!\yy!(C.north west)$) --
            ($(C.south east)!\yy!(C.north east)$);
    }
    \foreach \xx in {0.25,0.50,0.75} {
        \draw[tBlueBorder!35, line width=0.35pt]
            ($(C.south west)!\xx!(C.south east)$) --
            ($(C.north west)!\xx!(C.north east)$);
    }
    \node[font=\small\bfseries, fill=tBlue, inner sep=1pt] at (C.center)
        {$\boldsymbol{\beta}(z_{\boldsymbol{X}})$};
    \node[below=0.05cm of C, font=\scriptsize, text=black!60] (C_dim)
        {$\mathbb{R}^{r \times N}$};

    \node[mlp] (mlps) at (3.2, -2.5) {};
    \node[font=\scriptsize\bfseries, text=tOrangeBorder] at ($(mlps.north)+(0,-0.18)$)
        {$\mathrm{MLP}_{s}$};
    \pic at ($(mlps.center)+(0,-0.15)$) {mlpicon};

    \node[vectorTex] (d) at (6.2, -2.5) {};
    \foreach \yy in {0.18,0.34,0.50,0.66,0.82} {
        \draw[tGreenBorder!45, line width=0.35pt]
            ($(d.south west)!\yy!(d.north west)$) --
            ($(d.south east)!\yy!(d.north east)$);
    }
    \node[font=\small\bfseries, fill=tGreen, inner sep=1pt] at (d.center)
        {$a(s)$};
    \node[below=0.05cm of d, font=\scriptsize, text=black!60] (d_dim)
        {$\mathbb{R}^r$};

    \node[op] (matmul) at (8.2, 2.1) {$\times$};
    \node[above=0.08cm of matmul, font=\scriptsize\bfseries, text=black!70]
        {Mat-Vec};

    \node[vectorTex] (v) at (10.2, 2.1) {};
    \foreach \yy in {0.18,0.34,0.50,0.66,0.82} {
        \draw[tGreenBorder!45, line width=0.35pt]
            ($(v.south west)!\yy!(v.north west)$) --
            ($(v.south east)!\yy!(v.north east)$);
    }
    \node[font=\small\bfseries, fill=tGreen, inner sep=1pt] at (v.center)
        {$\boldsymbol{v}$};
    \node[below=0.05cm of v, font=\scriptsize, text=black!60] (v_dim)
        {$\mathbb{R}^r$};

    \node[op] (dot) at (12.2, -0.68) {$\cdot$};
    \node[above=0.08cm of dot, font=\scriptsize\bfseries, text=black!70]
        {Inner Prod};

    \node[imgbox=tPurpleBorder] (out_img) at (15.0, -0.68)
        {\includegraphics[width=1.8cm]{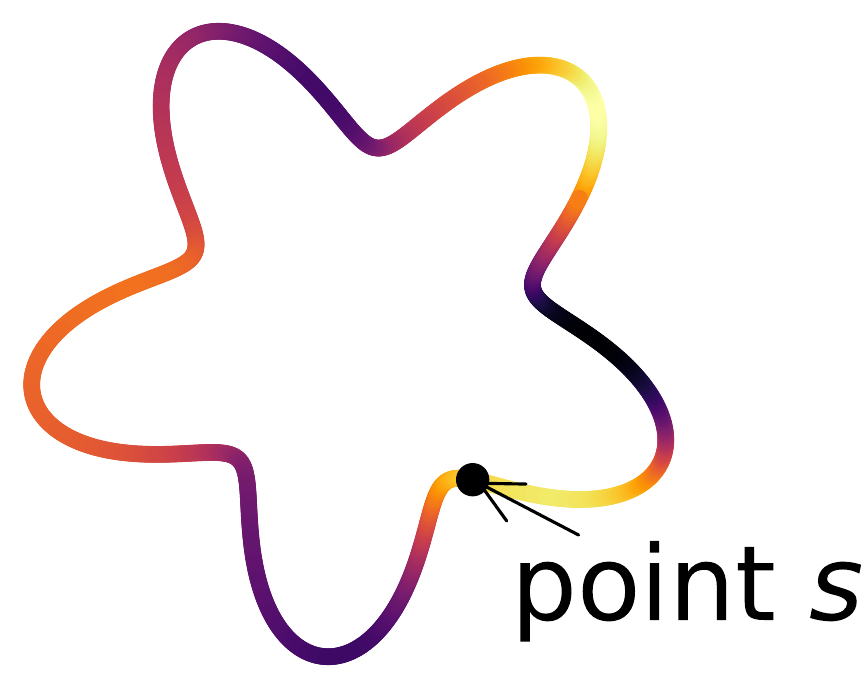}};
    \node[input, below=0.08cm of out_img] (out)
        {\textbf{Output} \\ \textcolor{black!50}{\scriptsize Neumann Trace on Point s}};

    \draw[arrow] (Z_img) -- (mlpX);
    \draw[arrow] (mlpX) -- (C);

    \draw[arrow] (s) -- (mlps);
    \draw[arrow] (mlps) -- (d);

    \draw[arrow] (C) -| (matmul);
    \draw[arrow] (f_img) -| (matmul);
    \draw[arrow] (matmul) -- (v);

    \draw[arrow] (v) |- (dot);
    \draw[arrow] (d) |- (dot);
    \draw[arrow] (dot) -- (out_img);

    \begin{scope}[on background layer]
        \node[
            fill=tBlue, fill opacity=0.15,
            rounded corners=8pt, dashed,
            draw=tBlueBorder, draw opacity=0.35,
            inner sep=0.4cm,
            fit=(mlpX) (C) (C_dim)
        ] (group1) {};
        \node[above=0.1cm of group1, font=\scriptsize\bfseries\color{tBlueBorder}]
            {Geometry Feature Extraction};

        \node[
            fill=tOrange, fill opacity=0.15,
            rounded corners=8pt, dashed,
            draw=tOrangeBorder, draw opacity=0.35,
            inner sep=0.4cm,
            fit=(mlps) (d) (d_dim)
        ] (group2) {};
        \node[below=0.1cm of group2, font=\scriptsize\bfseries\color{tOrangeBorder}]
            {Parameter Embedding};
    \end{scope}

\end{tikzpicture}
}
\caption{Schematic illustration of the proposed network architecture \eqref{eqNN6}.}
\label{fig:network}
\end{figure}

\subsection{Training Strategy}
Given reference DtN data $\widetilde{\Lambda}_{\boldsymbol{X}^l} f^l$, we train the network on the residual target
\begin{equation}
y_{rem}^l
:=
\widetilde{\Lambda}_{\boldsymbol{X}^l} f^l
-
2\mathcal{W}_0 f^l .
\end{equation}
The loss is the mean squared residual error on evaluation points $\{s_\ell\}_{\ell=1}^{N_s}$:
\begin{equation}
\mathcal{L}_{rem}(\theta)
:=
\frac{1}{N_{batch}}
\sum_{l=1}^{N_{batch}}
\frac{1}{N_s}
\sum_{\ell=1}^{N_s}
\left|
\mathcal{N}_r(z_{\boldsymbol{X}^l}, f^l, s_\ell; \theta)
-
y_{rem}^l(s_\ell)
\right|^2 .
\end{equation}
At inference time, the full DtN prediction is reconstructed as
\begin{equation} \label{eq:DtNSplitting}
\widehat{\Lambda}_{\boldsymbol{X}} f(s)
:=
2\mathcal{W}_0 f(s)
+
\mathcal{N}_r(z_{\boldsymbol{X}}, f, s; \theta).
\end{equation}

\subsection{Extension to Other PDEs} \label{OtherPDEs}
The decomposition extends to elliptic and exterior boundary value problems whose Green's functions share the same leading logarithmic singularity as the two-dimensional Laplace kernel. For the interior Helmholtz equation, this gives
\begin{equation} \label{eq22222}
\breve\Lambda_{\boldsymbol{X}, k} f
=
2\mathcal{W}_0 f
+
\mathcal{R}_{\boldsymbol{X}, k} f ,
\end{equation}
where the residual contains the geometry- and wave-number-dependent smooth contribution. For the exterior Helmholtz problem, the orientation of the normal derivative changes the sign of the principal part:
\begin{equation} \label{eq:helmholtz_exterior}
\breve\Lambda^{\mathrm{ext}}_{\boldsymbol{X}, k} f
=
-2\mathcal{W}_0 f
+
\mathcal{R}^{\mathrm{ext}}_{\boldsymbol{X}, k} f .
\end{equation}
The detailed derivation is given in Appendix~\ref{Other PDEs}.

\section{Theoretical Analysis} \label{sec:theory}
The analysis explains why PPDNO learns only the part of the DtN map where geometry actually enters.
In boundary coordinates, the order-one singular component is the universal Fourier multiplier
$2\mc W_0$; the geometry-dependent correction in \eqref{eq21} is a much smoother operator. We record
three consequences. First, subtracting $2\mc W_0$ gives a remainder with high-order Sobolev mapping
regularity. Second, this remainder admits a low-rank separated approximation with a trunk basis fixed
independently of the geometry. Third, after boundary sampling and training, the reconstructed DtN
error is the sum of the residual approximation error and the explicit principal-part error. The
proofs and detailed assumptions are given in Appendix~\ref{app:theory_proofs}.

\begin{theorem}[Mapping property of the remainder operator]\label{thm:pdo_remainder}
Let $\Gamma$ be a $C^\infty$ closed curve for which the interior Dirichlet problem is uniquely
solvable. With the density convention in Section~\ref{Decomposition of DtN},
\begin{equation}
    \mc R_{\bm X}:=\breve{\Lambda}_{\bm X}-2\mc W_0
\end{equation}
is a smoothing operator on $\T$: for all Sobolev indices $s$ and all integers $N\ge 0$,
\begin{equation}
    \mc R_{\bm X}:H^s(\T)\to H^{s+N}(\T)
\end{equation}
is bounded. For finite $C^n$ curves, the same argument gives a finite-smoothness kernel whose
regularity is limited by $n$.
\end{theorem}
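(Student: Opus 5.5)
We will prove the theorem through the explicit formula \eqref{eq20}. It gives
\[
\mc R_{\bm X}=2\mc W_1-4\mc W\mc K(\mc I+2\mc K)^{-1}.
\]
We show that each factor is either smoothing or bounded between Sobolev spaces with a controlled loss of derivatives. The smoothing then comes from $\mc K$ and $\mc W_1$, which have $C^\infty$ kernels.

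The first step is kernel regularity. For a $C^\infty$ regular closed curve we write
\[
\log|\bm X(s)-\bm X(t)|=\log\left|2\sin\tfrac{s-t}{2}\right|+L(s,t),
\qquad
L(s,t)=\log\frac{|\bm X(s)-\bm X(t)|}{|2\sin\frac{s-t}{2}|}.
\]
Because $\bm X$ is injective and regular, the quotient inside the logarithm is a smooth, strictly positive, $2\pi$-periodic function on $\T\times\T$. Its diagonal value is $|\bm X'(s)|$, which we check by Taylor expansion with integral remainder. Hence $L\in C^\infty(\T^2)$.

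Integrating by parts in $t$ then gives
\[
\mc W_1 f(s)=-\frac1{2\pi}\int_\T \partial_s\partial_t L(s,t)\,f(t)\,dt,
\]
an integral operator with $C^\infty$ kernel. Such an operator maps $H^{s}\to H^{s+N}$ for every $s$ and $N$: expand the kernel in a double Fourier series whose coefficients decay faster than any power, and use duality for negative $s$.

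Similarly, the parameterized double-layer kernel
\[
\frac{(\bm X(t)-\bm X(s))\cdot \bm n(t)|\bm X'(t)|}{|\bm X(s)-\bm X(t)|^2}
\]
extends smoothly to the diagonal, with limiting value proportional to the curvature. So $\mc K$ is smoothing in the same sense.

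The second step handles the remaining factors.

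\begin{itemize}
\item $\mc W=\mc W_0+\mc W_1$ is bounded $H^{s+1}\to H^{s}$. This follows from the multiplier $|k|/2$ in \eqref{eq19} together with the first step.
\item $(\mc I+2\mc K)^{-1}$ is bounded on every $H^s$. Since $\mc K$ is compact, $\mc I+2\mc K$ is Fredholm of index zero on each $H^s(\T)$. Its injectivity follows from the unique solvability hypothesis, which is exactly the classical double-layer injectivity argument for the interior Dirichlet problem. Elliptic regularity of the kernel, namely $\varphi=-2\mc K\varphi\in C^\infty$ for any kernel element, shows the null space is the same in every $H^s$. Hence $\mc I+2\mc K$ is invertible on every $H^s$.
\end{itemize}

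Composing, $\mc K(\mc I+2\mc K)^{-1}:H^s\to H^{s+N+1}$ and then $\mc W:H^{s+N+1}\to H^{s+N}$. Adding $2\mc W_1$ gives the claim for $\mc R_{\bm X}$.

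For $C^n$ curves, the same computation shows that $L$ and the double-layer kernel are $C^{n-2}$ (roughly), since we lose derivatives in the difference quotients. So the smoothing order is limited accordingly.

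The main obstacle is the first step: the uniform smoothness of $L$ and of the double-layer kernel across the diagonal. It must be done carefully with periodic divided differences,
\[
\bm X(s)-\bm X(t)=(s-t)\int_0^1\bm X'(t+\theta(s-t))\,d\theta,
\]
which has to be reconciled with $2\sin\frac{s-t}{2}$ near both $s=t$ and the wraparound $s-t\equiv 0 \pmod{2\pi}$. The injectivity step needs only one precise sentence relating the density convention to the interior problem.
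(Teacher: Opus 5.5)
Your proposal is correct and follows essentially the same route as the paper. Both arguments use the splitting $\mathcal R_{\bm X}=2\mathcal W_1-4\mathcal W\mathcal K(\mathcal I+2\mathcal K)^{-1}$ from \eqref{eq20}, divided differences showing that $L_{\bm X}$ and the double-layer kernel extend smoothly across the diagonal (Lemma~\ref{lem:diagonal-cancellation}), the Fourier-coefficient bound for smooth-kernel operators, and the composition $H^s\to H^{s+N+1}\to H^{s+N}$. The one thing you add is the Fredholm-index-zero plus null-space-regularity argument for invertibility of $\mathcal I+2\mathcal K$ on every $H^s$, which the paper simply cites. Note that the classical injectivity step there also needs continuity of the double layer's normal derivative and exterior Neumann uniqueness, not only the interior unique-solvability hypothesis.
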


Theorem~\ref{thm:pdo_remainder} is the analytic reason for the decomposition: the network is not asked
to rediscover the multiplier $|k|$. It only sees the compact residual operator, whose kernel varies
with the boundary but has no diagonal singularity.

\begin{theorem}[Low-rank approximation of the remainder operator]\label{thm:rank-kernel}
Assume the admissible remainder family of Definition~\ref{def:smooth-kernel}. For each rank budget
$r\ge m$, one can choose trunk functions $a_1,\ldots,a_r$ independently of $\bm X$ such that, for
every $\bm X\in\mc A_n$, there are geometry-dependent functions $b_k(\cdot;\bm X)$ defining
\begin{equation}
    \mc R_{\bm X,r}f(s)=\sum_{k=1}^r a_k(s)c_k^{\bm X}(f),
    \qquad
    c_k^{\bm X}(f)=\int_\T b_k(t;\bm X)f(t)\,dt,
\end{equation}
such that
\begin{equation}
    \norm{\mc R_{\bm X}-\mc R_{\bm X,r}}_{L^2(\T)\to L^\infty(\T)}
    +
    \norm{\mc R_{\bm X}-\mc R_{\bm X,r}}_{L^2(\T)\to L^2(\T)}
    \le C r^{-m}
\end{equation}
uniformly over the admissible geometry class.
\end{theorem}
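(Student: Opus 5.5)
The plan is to approximate the remainder kernel $R(s,t;\bm X)$ in the first variable $s$ by a fixed, geometry-independent basis, and to absorb all geometry dependence into the coefficient functions $b_k(t;\bm X)$. Definition~\ref{def:smooth-kernel} (stated in the appendix) presumably supplies three things: the admissible class $\mc A_n$; the representation $\mc R_{\bm X}f(s)=\int_\T R(s,t;\bm X)f(t)\,dt$, which is available by Theorem~\ref{thm:pdo_remainder}; and a uniform bound $\sup_{\bm X\in\mc A_n}\sup_{t}\norm{R(\cdot,t;\bm X)}_{H^{m+1}(\T)}\le M$, or an equivalent $C^{m,\alpha}$-type bound in $s$. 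I will take that uniform mixed-regularity bound as the working hypothesis.

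\textbf{Choice of trunk.} I take $a_1,\ldots,a_r$ to be the real trigonometric functions $1,\cos(js),\sin(js)$ for $|j|\le K$, with $r\approx 2K+1$ (any leftover index gets $a_k=0$). I set
\[
b_k(t;\bm X)=\frac{1}{2\pi}\int_\T R(s,t;\bm X)\,\overline{a_k(s)}\,ds \quad\text{(suitably normalized)},
\]
so that $R_r(s,t;\bm X):=\sum_k a_k(s)b_k(t;\bm X)$ is the Fourier truncation $P_K$ of $R(\cdot,t;\bm X)$ in the $s$-variable. The trunk depends only on $r$, while the $b_k$ inherit the geometry. The requirement $r\ge m$ is only there so that $K\gtrsim r$ is a meaningful truncation level.

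\textbf{Kernel error estimate.} For each fixed $t$, I will use the standard tail estimate
\[
\norm{(I-P_K)R(\cdot,t)}_{L^\infty}\le \sum_{|j|>K}|\widehat R_j(t)|\le \Big(\sum_{|j|>K}|j|^{-2(m+1)}\Big)^{1/2}\norm{R(\cdot,t)}_{H^{m+1}}\lesssim K^{-m-1/2}M,
\]
and the $L^2$ tail is even better, of order $K^{-m-1}M$. Writing $E=R-R_r$, Cauchy--Schwarz in $t$ gives
\[
|(\mc R_{\bm X}-\mc R_{\bm X,r})f(s)|\le \norm{E(s,\cdot)}_{L^2_t}\norm{f}_{L^2}\le \sqrt{2\pi}\,\sup_{s,t}|E|\,\norm{f}_{L^2},
\]
which controls the $L^2\to L^\infty$ norm. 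The $L^2\to L^2$ norm is bounded by the Hilbert--Schmidt norm $\norm{E}_{L^2(\T^2)}$, which is in turn dominated by the same sup bound. Since $K\sim r/2$, both norms are bounded by $C M r^{-m}$, and $C$ is uniform over $\mc A_n$ because $M$ is.

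\textbf{Main obstacle.} The difficult step is justifying the uniform regularity bound $M$ over $\mc A_n$ if the definition does not assume it outright. To derive it, I would go back to \eqref{eq20}, namely $\mc R_{\bm X}=2\mc W_1-4\mc W\mc K(\mc I+2\mc K)^{-1}$. The kernels of $\mc K$ and $\mc W_1$ are built from divided differences of $\bm X$, so their $C^{m+1}$ norms are controlled by $\norm{\bm X}_{C^{n}}$ for $n\ge m+3$ and by a lower bound on the chord-arc constant $|\bm X(s)-\bm X(t)|/|2\sin\frac{s-t}{2}|$. The inverse $(\mc I+2\mc K)^{-1}$ must be bounded uniformly; I would get this from compactness of $\mc A_n$ in $C^{n-1}$ together with continuity of $\bm X\mapsto\mc K$ in operator norm and invertibility at each admissible curve. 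Finally, $\mc W$ is applied on the left, and it costs one derivative in $s$. This is exactly why the smoothness index must exceed $m$ by a fixed margin.
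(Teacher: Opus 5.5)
There is a genuine gap. Your argument uses a hypothesis that Definition~\ref{def:smooth-kernel} does not supply, and the key step fails under the hypothesis it does supply. The definition only gives $\sup_{\bm X\in\mc A_n}\norm{R(\cdot,\cdot;\bm X)}_{C^m(\T^2)}\le M_R$, i.e.\ a sup-norm bound on $\partial_s^m R$, not an $H^{m+1}$ or $C^{m,\alpha}$ bound in $s$.

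With only this bound, your Fourier-tail estimate gives $\sum_{|j|>K}\abs{\widehat R_j(t)}\lesssim K^{-m+1/2}\norm{R(\cdot,t)}_{H^m}$. The sharper route, Jackson's theorem combined with the Lebesgue constant of the Dirichlet kernel, gives $\norm{(I-P_K)R(\cdot,t)}_{L^\infty}\lesssim K^{-m}\log K$. By a classical result of Kolmogorov on partial Fourier sums of functions with bounded $m$-th derivative, this logarithm cannot be removed uniformly over the $C^m$ ball. So Dirichlet truncation yields at best $Cr^{-m}\log r$, not the claimed $Cr^{-m}$.

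Your \emph{Main obstacle} paragraph does not repair this. It is unnecessary, because the definition assumes the uniform kernel bound outright. It also cannot produce the extra derivative in general, because the definition fixes $m$ independently of the curve regularity $n$.

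The repair is cheap and keeps your global trigonometric trunk: replace $P_K$ by a linear near-best operator of uniformly bounded norm. A suitable choice is the de la Vall\'ee Poussin mean $V_K=2\sigma_{2K}-\sigma_K$, with $\sigma_K$ the Fej\'er means. It has degree $<2K$, norm at most $3$ on $C(\T)$, and reproduces trigonometric polynomials of degree $\le K$. Hence $\norm{R(\cdot,t;\bm X)-V_KR(\cdot,t;\bm X)}_{L^\infty}\le 4E_K(R(\cdot,t;\bm X))\le C_mK^{-m}M_R$ by Jackson, where $E_K$ is the best uniform error by trigonometric polynomials of degree $\le K$. Take $b_j(t;\bm X)=\lambda_j\widehat R_j(t;\bm X)$ with the de la Vall\'ee Poussin multipliers $\lambda_j$; the rank is about $4K$, and the rest of your operator-norm argument goes through verbatim.

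The paper avoids Fourier analysis entirely. It glues degree-$(m-1)$ Taylor polynomials in $s$ at $p+1$ equispaced nodes using a nonnegative hat partition of unity. The trunk is $a_{j,\ell}(s)=\chi_j(s)d_j(s)^\ell/\ell!$ and the geometry-dependent factor is $b_{j,\ell}(t;\bm X)=\partial_s^\ell R(s_j,t;\bm X)$. The rank is $m(p+1)$ and the kernel error is $\frac{M_R}{m!}\paren{\frac{2\pi}{p+1}}^m$. Nonnegativity of the partition acts like a Lebesgue constant equal to one, which is exactly why no logarithm appears under the bare $C^m$ hypothesis.

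The paper's route is elementary, has explicit constants, and uses only $\sup\abs{\partial_s^m R}$. Your corrected Fourier route gives smooth global trunk functions, matching the harmonic encoding used in the experiments, and automatically improves beyond $r^{-m}$ when the kernel is smoother.
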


This is the structural statement matched by the architecture in \eqref{eqNN6}. The residual kernel
has the fixed-trunk form
\begin{equation}
    R(s,t;\bm X)\approx \sum_{k=1}^r a_k(s)b_k(t;\bm X),
\end{equation}
where $a_k(s)$ is chosen once and the geometry dependence is carried by $b_k(t;\bm X)$, equivalently
by the branch functionals $c_k^{\bm X}$. This is different from learning an unconstrained
two-variable kernel for each boundary.

\begin{theorem}[Discretized residual learning and full-map error]\label{thm:trained-total}
Let $\Theta_{r,J}$ be a class of low-rank DeepONets and let $\mu$ be the distribution of boundary data
and geometries. For a residual predictor $\mc N_\theta$, set
\begin{equation}
    \mc L_{\rm pop}(\theta)
    :=
    \mathbb E_{(f,\bm X)\sim\mu}
    \norm{\mc N_\theta(f,\bm X)-\mc R_{\bm X}f}_{L^2(\T)}^2,
    \qquad
    \mc L_{\rm emp}(\theta)
    :=
    \frac1N\sum_{i=1}^N
    \norm{\mc N_\theta(f_i,\bm X_i)-\mc R_{\bm X_i}f_i}_{L^2(\T)}^2 .
\end{equation}
Define
\begin{equation}
    \Delta_N:=\sup_{\theta\in\Theta_{r,J}}
    \abs{\mc L_{\rm pop}(\theta)-\mc L_{\rm emp}(\theta)}.
\end{equation}
Suppose the trained parameter $\widehat\theta$ satisfies
$\mc L_{\rm emp}(\widehat\theta)\le \inf_{\theta\in\Theta_{r,J}}\mc L_{\rm emp}(\theta)+\eta_{\rm opt}$,
and suppose there is a deterministic witness in $\Theta_{r,J}$ with residual error
$E_{\rm det}(r,J)$ on $\operatorname{supp}\mu$. Then
\begin{equation}
    \mc L_{\rm pop}(\widehat\theta)^{1/2}
    \le
    E_{\rm det}(r,J)+\sqrt{2\Delta_N}+\sqrt{\eta_{\rm opt}}.
\end{equation}
If the final predictor evaluates $2\mc W_0$ exactly, the same bound holds for the reconstructed DtN
map. If $2\mc W_0$ is truncated to $2\mc W_0^{(K)}$, one adds the corresponding root-mean-square
Fourier truncation error.

Under the finite-node assumptions in Appendix~\ref{app:theory_proofs}, the deterministic residual
term separates into the expected numerical and approximation errors:
\begin{equation}
    E_{\rm det}(r,J)
    \lesssim
    E_{\rm rank}(r)+E_{\rm geom}(J)+E_{\rm quad}(r,J)+E_{\rm net}(r,J),
\end{equation}
where $E_{\rm rank}(r)=O(r^{-m})$ is the fixed-trunk rank error, $E_{\rm geom}(J)$ is the geometry
reconstruction error, $E_{\rm quad}(r,J)$ is the input quadrature error, and $E_{\rm net}(r,J)$ is the
neural realization error for the trunk and geometry-conditioned branch factors.
\end{theorem}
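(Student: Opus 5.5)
The argument is a standard oracle-inequality decomposition in $L^2(\mu;L^2(\T))$, followed by a triangle-inequality splitting of the deterministic witness error.

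\textbf{Step 1: population bound.} Write $\|\cdot\|_\mu$ for the norm $\big(\mathbb E_{(f,\bm X)\sim\mu}\|\cdot\|_{L^2(\T)}^2\big)^{1/2}$, so that $\mc L_{\rm pop}(\theta)^{1/2}=\|\mc N_\theta-\mc R\|_\mu$. Let $\theta^\ast\in\Theta_{r,J}$ be the witness. Its residual error is at most $E_{\rm det}(r,J)$ pointwise on $\operatorname{supp}\mu$, so $\mc L_{\rm pop}(\theta^\ast)\le E_{\rm det}^2$. Then chain the inequalities
\[
\mc L_{\rm pop}(\widehat\theta)\le \mc L_{\rm emp}(\widehat\theta)+\Delta_N
\le \mc L_{\rm emp}(\theta^\ast)+\eta_{\rm opt}+\Delta_N
\le \mc L_{\rm pop}(\theta^\ast)+2\Delta_N+\eta_{\rm opt}.
\]
The first and third inequalities use the definition of $\Delta_N$ (both parameters lie in $\Theta_{r,J}$). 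The second uses near-minimality of $\widehat\theta$ against the competitor $\theta^\ast$. Subadditivity of the square root, $\sqrt{a+b+c}\le\sqrt a+\sqrt b+\sqrt c$, then gives $E_{\rm det}+\sqrt{2\Delta_N}+\sqrt{\eta_{\rm opt}}$.

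\textbf{Step 2: reconstructed DtN map.} By \eqref{eq21}, $\widehat\Lambda_{\bm X}f-\breve\Lambda_{\bm X}f=\mc N_{\widehat\theta}(f,\bm X)-\mc R_{\bm X}f$ exactly when $2\mc W_0$ is evaluated exactly, so the same bound holds. If $2\mc W_0$ is replaced by $2\mc W_0^{(K)}$, add and subtract that term. Minkowski's inequality in $\|\cdot\|_\mu$ then adds $\|2(\mc W_0-\mc W_0^{(K)})f\|_\mu$. By \eqref{eq19}, this is the root-mean-square of the Fourier tail $\big(\sum_{|k|>K}|k|^2|\hat f(k)|^2\big)^{1/2}$, which is finite whenever $f\in H^1$ $\mu$-almost surely.

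\textbf{Step 3: splitting $E_{\rm det}$.} I would build the witness in stages and apply the triangle inequality at each stage, working in sup over $\operatorname{supp}\mu$ of the $L^2(\T)$ error normalized by $\|f\|$.

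First, start from the fixed-trunk operator $\mc R_{\bm X,r}$ of Theorem~\ref{thm:rank-kernel}. Its $L^2\to L^2$ error is $Cr^{-m}$, which gives $E_{\rm rank}$.

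Second, replace $b_k(\cdot;\bm X)$ by $b_k(\cdot;\bm X_J)$, where $\bm X_J$ is the geometry reconstructed from the $J$ sampled nodes. This requires a Lipschitz dependence of $b_k$ on $\bm X$ in the admissible class. Such dependence follows from the smooth dependence of the kernel of $\mc R_{\bm X}$ on $\bm X$, since $\mc K$, $\mc W_1$ and $(\mc I+2\mc K)^{-1}$ all depend smoothly on the curve, and it yields $E_{\rm geom}(J)$.

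Third, replace $c_k(f)=\int b_k f$ by the quadrature sum $\sum_j\omega_j b_k(t_j)f(t_j)$. The periodic trapezoidal error is controlled by the smoothness of $b_k f$, which gives $E_{\rm quad}$. The constants must be kept uniform in $k\le r$, which is why $E_{\rm quad}$ depends on $r$.

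Fourth, realize $a_k$ and the maps $z\mapsto \omega_j b_k(t_j;z)$ by MLPs to accuracy $\epsilon$ using a universal approximation theorem on the compact image of $\mathcal A_n$. The bilinear structure \eqref{eqNN6} makes the propagated error at most $r\epsilon(\max\|b\|+\max\|a\|)$, which gives $E_{\rm net}$. The implied constant in $\lesssim$ absorbs the norms of $a_k$, $b_k$ and the bound on $f$.

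The main obstacle is the second substep. It needs uniform Lipschitz continuity of the geometry-dependent branch factors with respect to the curve, which Theorem~\ref{thm:rank-kernel} does not directly give. I would first try to choose $b_k(t;\bm X)=\int_\T a_k(s)R(s,t;\bm X)\,ds$, with the $a_k$ orthonormal. This inherits continuity from the kernel, which in turn follows from a Neumann-series perturbation bound for $(\mc I+2\mc K_{\bm X})^{-1}$.
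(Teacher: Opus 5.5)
Your Steps 1 and 2 are the paper's proof. It uses the same chain through $\mc L_{\rm emp}$ with $\theta^\star$ as competitor, subadditivity of the square root, exact cancellation of $2\mc W_0$, and Minkowski for the truncated multiplier.

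Step 3 is where you genuinely differ. The paper's Theorem~\ref{thm:discrete} telescopes in the order
\[
\mc R_{\bm X}\;\to\;\mc R_{\bm X_J}\;\to\;\mc R_{\bm X_J,r}\;\to\;\mc Q^{\bm X}_{r,J}\;\to\;\widehat{\mc N}_{r,J}.
\]
That is, it swaps the geometry on the \emph{full} kernel first, using the Lipschitz bound \eqref{eq:lipschitz-kernel} of Definition~\ref{def:smooth-kernel}. Only afterwards does it rank-truncate the kernel of the reconstructed curve $\bm X_J$, which is admissible because Assumption~\ref{ass:geom} keeps $\bm X_J$ in an enlarged class. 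With this ordering no regularity of $\bm X\mapsto b_k(\cdot;\bm X)$ is ever used, so the obstacle you flag never arises. Any rank-$r$ construction is allowed, including the non-orthogonal hat--Taylor trunk, and the bound is derived in $L^\infty(\T)$ and converted to $L^2$ at the end.

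Your ordering (rank first, then geometry inside the branch factors) does go through with your fix, and the fix is genuinely needed. The hat--Taylor factors $\partial_s^\ell R(s_j,t;\bm X)$ of Theorem~\ref{thm:rank-kernel-detail} would require $\mu\ge m-1$ in \eqref{eq:lipschitz-kernel}.

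Take $a_k$ to be an orthonormal basis of the fixed trunk span and $b_k(t;\bm X)=\int_\T a_k(s)R(s,t;\bm X)\,ds$. Then $\mc R_{\bm X,r}=P_r\mc R_{\bm X}$. You must redo the rank step for this choice rather than cite the paper's operator, but that is immediate. $P_r$ is the best $L^2_s$ approximation, and the paper's $R_p(\cdot,t;\bm X)$ lies in the span. The Hilbert--Schmidt norm then gives $O(r^{-m})$ in $L^2\to L^2$.

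The geometry term becomes $P_r(\mc R_{\bm X}-\mc R_{\bm X_J})$. It is even $r$-independent, since $P_r$ is a contraction, and it needs only the $C^0$ kernel Lipschitz bound. What you give up is sup-norm control, since $P_r$ is not $L^\infty$-stable in general. Your version is therefore intrinsically an $L^2$ estimate, which is all the statement requires.

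Two side remarks. First, you do not need a Neumann-series derivation of the kernel's Lipschitz dependence on $\bm X$: it is part of the admissible-family assumptions under which the statement is made. Second, finiteness of the root-mean-square Fourier tail requires $\mathbb E_\mu\norm{f}_{H^1}^2<\infty$, not merely $f\in H^1$ almost surely.
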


\section{Numerical Experiments}

\paragraph{Objective.} In this section, we comprehensively evaluate the performance of the proposed Principal-Part Decomposed Neural Operators (PPDNO) framework. We organize the experiments into three parts to demonstrate its accuracy, efficiency, and generalization capabilities:
(1) the interior Laplace equation on elliptical domains,
(2) the interior Laplace equation on complex domains parameterized by Fourier series,
(3) the exterior Helmholtz equation.
We report relative $L^2$ errors and relative maximum errors on the boundary evaluation grid.

\subsection{Experimental Setup} \label{Experimental Setup}
\paragraph{Discretization and data.}
All boundaries are sampled uniformly on $\T$ with $N=256$ sensor points, and the evaluation grid is the same unless stated otherwise. In all cases, Dirichlet data and Neumann traces are evaluated analytically on the boundary, and each dataset is split into $80\%$ training and $20\%$ testing samples.

\paragraph{Baselines and ablation.}
We compare PPDNO with DeepONet~\cite{lu2021learning}, 1D Fourier Neural Operator (FNO)~\cite{li2021fourier}, and Operator Transformer (OT)~\cite{li2023transformer}. These baselines directly learn the full DtN map $\breve\Lambda_{\boldsymbol{X}}$, whereas PPDNO computes the principal part by FFT and learns only the residual $\mathcal{R}_{\boldsymbol{X}}$. We keep the black-box baselines architecture-only, since residualizing them would already inject the proposed analytic decomposition into the baseline models. We include two ablations. \textbf{PPDNO-Direct} uses the same low-rank architecture as PPDNO but is trained on the full map. \textbf{FFT-only} uses only the analytic principal part, and omits the learned residual.

\paragraph{Network configuration and training.}
For most PPDNO runs, after fixing rank $r$, $\mathrm{MLP}_s$ and $\mathrm{MLP}_{\boldsymbol{X}}$ are 4-layer fully connected networks with width $r^2$ and \texttt{ReLU}. Baseline widths and depths are tuned so that their trainable-parameter memory is comparable to, or slightly larger than, that of PPDNO; detailed architectures and hyperparameters are given in Appendix~\ref{app:hyperparameters}. Unless stated otherwise, all models are trained with MSE loss, \texttt{Adam} with initial learning rate $10^{-4}$, and a \texttt{ReduceLROnPlateau} scheduler. Training dynamics are reported in Appendix~\ref{app:loss_curves}, and the inference benchmark protocol is described in Appendix~\ref{app:benchmark_protocol}. The code is available at \url{https://github.com/vstppen/DtN_learning}.

\subsection{Laplace Equation on Elliptical Domains} \label{Laplace Equation on Elliptical Domains}
In our first experiment, we consider the interior boundary value problem for the Laplace equation on elliptical domains.

\paragraph{Data Generation.}
The boundary is parameterized by $x(\theta)=a\cos\theta$ and $y(\theta)=b\sin\theta$, with $a,b\sim\mathcal{U}(0.5,1.5)$. We sample $n_X=5{,}000$ geometries and, for each geometry, generate $10$ harmonic fields of the form $u(\boldsymbol{x})=\sum_{k=1}^m c_k G(\boldsymbol{x}-\boldsymbol{y}_k)$. Here $m\sim\mathcal{U}\{1,2,3,4\}$, $c_k\sim\mathcal{U}(-0.5,0.5)$, and the sources $\boldsymbol{y}_k$ are placed at radii $r_y\in[1.1R_{\max},2.0R_{\max}]$ with $R_{\max}=\max(a,b)$. This gives $50{,}000$ paired Dirichlet and Neumann traces.

\paragraph{Performance and Efficiency Comparison.}
We set $r=12$ and evaluate on $10{,}000$ held-out samples. Table~\ref{tab:laplace_ellipse} shows that directly learning the full DtN map is difficult: DeepONet, FNO, OT, and PPDNO-Direct all incur substantially larger errors than PPDNO. The FFT-only ablation measures the contribution of the analytic principal part without any learned residual. After subtracting the analytic principal part and learning the residual, PPDNO reaches a relative $L^2$ error of $3.17\times10^{-3}$, more than an order of magnitude below the strongest full-map baseline in this test.

The decomposition also preserves efficiency. The end-to-end latency of PPDNO is $2.10$ ms for the full test set, close to PPDNO-Direct ($1.54$ ms) and far below FNO and OT. Thus the FFT-based principal-part correction adds little overhead while removing the main singular component of the operator; benchmark details are given in Appendix~\ref{app:benchmark_protocol}.

\begin{table}[htbp]
\centering
\small
\caption{Average errors and inference speed comparison for the Laplace problem on elliptical domains. The reported latency is the median wall-clock time for batched prediction of the entire test set ($10,000$ samples) on a single GPU.}
\label{tab:laplace_ellipse}
\begin{tabular}{lcccc}
\toprule
\textbf{Method} & \textbf{Relative $L^2$ Error} & \textbf{Relative Max Error} & \textbf{Latency (ms)} & \textbf{Throughput (samples/s)} \\
\midrule
DeepONet & $2.02 \times 10^{-1}$ & $2.35 \times 10^{-1}$ & 1.40 & $7.13 \times 10^6$ \\
FNO      & $4.35 \times 10^{-2}$ & $5.36 \times 10^{-2}$ & 320.19 & $3.12 \times 10^4$ \\
OT       & $1.51 \times 10^{-1}$ & $1.83 \times 10^{-1}$ & 3752.2 & $2.67 \times 10^3$ \\
FFT-only & $2.79 \times 10^{-1}$ & $1.95 \times 10^{-1}$ & 0.65 & $1.53 \times 10^7$ \\
PPDNO-Direct & $9.24 \times 10^{-2}$ & $1.06 \times 10^{-1}$ & 1.54  & $6.47 \times 10^6$ \\
\textbf{PPDNO} & $\mathbf{3.17 \times 10^{-3}}$ & $\mathbf{3.43 \times 10^{-3}}$ & 2.10 & $4.75 \times 10^6$ \\
\bottomrule
\end{tabular}
\end{table}

\paragraph{Specific Test Cases.}
We test the trained models on a specific ellipse ($a=1.0, b=0.8$) under two exact solutions:
(1) a fundamental solution $u(x,y) = \frac{1}{2\pi}\log \sqrt{(x-1)^2+(y-1)^2}$ corresponding to a source located at $(1, 1)$, which induces sharp gradients on the boundary, and
(2) the out-of-distribution (OOD) smooth harmonic function $u(x,y)=e^x\cos y$.

Table~\ref{tab:ood_cases} shows that PPDNO remains the most accurate on both cases. FFT-only captures the principal response but does not include the geometry-dependent residual correction. PPDNO-Direct already improves OOD extrapolation over generic baselines, indicating the benefit of the low-rank architecture, but the additional principal-part subtraction consistently gives the best errors. Pointwise prediction plots are provided in Appendix~\ref{app:specific_case_visualizations}.

\begin{table}[htbp]
\centering
\small
\caption{Relative errors for the specific test cases on an ellipse.}
\label{tab:ood_cases}
\begin{tabular}{lcccc}
\toprule
\multirow{2}{*}{\textbf{Method}} & \multicolumn{2}{c}{\textbf{Case 1: Point Source}} & \multicolumn{2}{c}{\textbf{Case 2:} $\mathbf{e^x \cos y}$} \\
\cmidrule(lr){2-3} \cmidrule(lr){4-5}
& \textbf{Rel. $L^2$} & \textbf{Rel. Max} & \textbf{Rel. $L^2$} & \textbf{Rel. Max} \\
\midrule
DeepONet & $1.36 \times 10^{-1}$ & $1.29 \times 10^{-1}$ & $2.79 \times 10^{-1}$ & $2.65 \times 10^{-1}$ \\
FNO      & $1.36 \times 10^{-2}$ & $1.88 \times 10^{-2}$ & $5.20 \times 10^{-1}$ & $5.67 \times 10^{-1}$ \\
OT & $ 3.68\times 10^{-2}$ & $ 5.70\times 10^{-2}$ & $ 1.08\times 10^{0}$ & $ 9.08 \times 10^{-1}$ \\
FFT-only & $1.72 \times 10^{-1}$ & $8.84 \times 10^{-2}$ & $1.69 \times 10^{-1}$ & $1.06 \times 10^{-1}$ \\
PPDNO-Direct & $7.28 \times 10^{-2}$ & $6.91 \times 10^{-2}$ & $1.09 \times 10^{-2}$ & $1.35 \times 10^{-2}$ \\
\textbf{PPDNO} & $\mathbf{2.72 \times 10^{-3}}$ & $\mathbf{1.95 \times 10^{-3}}$ & $\mathbf{2.83 \times 10^{-3}}$ & $\mathbf{2.82 \times 10^{-3}}$ \\
\bottomrule
\end{tabular}%
\end{table}

\subsection{Laplace Equation on Fourier-Parameterized Domains} \label{Laplace Fourier}
To further evaluate the robustness of our method on more irregular and non-convex geometries, our second experiment considers the interior Laplace problem on randomly generated Fourier domains.

\paragraph{Data Generation.}
The boundaries are parameterized by $x(t)=r(t)\cos t$ and $y(t)=r(t)\sin t$, where
\begin{equation}
    r(t) = a_0 + \sum_{k=1}^K (a_k \cos kt + b_k \sin kt).
\end{equation}
We draw $a_0\sim\mathcal{U}(0.8,1.2)$, $K\sim\mathcal{U}\{1,2,3,4\}$, and $a_k,b_k\sim\mathcal{N}(0,\sigma_k^2)$ with $\sigma_k=0.2(1+k)^{-0.5}$. After filtering for valid curves, we sample $n_X=10{,}000$ geometries and generate $10$ point-source solutions per geometry as in the elliptical experiment, yielding $100{,}000$ paired boundary conditions.

\paragraph{Performance Comparison.}
We use $r=16$ and evaluate on $20{,}000$ held-out samples. Table~\ref{tab:laplace_fourier} shows that PPDNO remains the most accurate model on this more variable geometry class, reducing the relative $L^2$ error to $1.47\times10^{-2}$.

\paragraph{Specific Test Cases.}
We also evaluate zero-shot extrapolation on a specific Fourier domain
\begin{equation}
    r(\theta) = 1.1 + 0.3 \cos\theta - 0.1 \sin 2\theta + 0.05 \cos 4\theta.
\end{equation}
As before, we test two exact solutions:
(1) a fundamental solution $u(x,y) = \frac{1}{2\pi}\log \sqrt{(x-2.0)^2+(y-2.0)^2}$ corresponding to a source located at $(2.0, 2.0)$, and
(2) the OOD smooth harmonic function $u(x,y)=e^x(\sin y+\cos y)$.

As shown in Table~\ref{tab_laplace_fourier}, PPDNO gives the lowest errors on both tests. OT is unstable on this case, with errors close to one, while PPDNO keeps the relative $L^2$ error below $10^{-2}$. Additional visualizations are provided in Appendix~\ref{app:specific_case_visualizations}.

\begin{table}[htbp]
\centering
\begin{minipage}[t]{0.38\textwidth}
\centering
\caption{Average relative errors for the Laplace problem on Fourier-parameterized domains.}
\label{tab:laplace_fourier}
\resizebox{\linewidth}{!}{%
\begin{tabular}{lcc}
\toprule
\textbf{Method} & \textbf{Rel. $L^2$} & \textbf{Rel. Max} \\
\midrule
DeepONet & $1.66 \times 10^{-1}$ & $2.09 \times 10^{-1}$ \\
FNO      & $5.14 \times 10^{-2}$ & $7.95 \times 10^{-2}$ \\
OT       & $1.00 \times 10^{0}$ & $1.01 \times 10^{0}$ \\
FFT-only & $2.07 \times 10^{-1}$ & $2.50 \times 10^{-1}$ \\
PPDNO-Direct & $6.17 \times 10^{-2}$ & $8.60 \times 10^{-2}$ \\
\textbf{PPDNO} & $\mathbf{1.47 \times 10^{-2}}$ & $\mathbf{1.99 \times 10^{-2}}$ \\
\bottomrule
\end{tabular}%
}
\end{minipage}
\hfill
\begin{minipage}[t]{0.58\textwidth}
\centering
\caption{Relative errors for the specific test cases on a Fourier-parameterized domain.}
\label{tab_laplace_fourier}
\resizebox{\linewidth}{!}{%
\begin{tabular}{lcccc}
\toprule
\multirow{2}{*}{\textbf{Method}}
& \multicolumn{2}{c}{\textbf{Point Source}}
& \multicolumn{2}{c}{$\mathbf{e^x(\sin y+\cos y)}$} \\
\cmidrule(lr){2-3} \cmidrule(lr){4-5}
& \textbf{Rel. $L^2$} & \textbf{Rel. Max}
& \textbf{Rel. $L^2$} & \textbf{Rel. Max} \\
\midrule
DeepONet & $5.00 \times 10^{-2}$ & $6.22 \times 10^{-2}$ & $3.85 \times 10^{-1}$ & $4.24 \times 10^{-1}$ \\
FNO      & $1.25 \times 10^{-2}$ & $2.00 \times 10^{-2}$ & $5.38 \times 10^{-1}$ & $4.70 \times 10^{-1}$ \\
OT       & $9.99 \times 10^{-1}$ & $9.99 \times 10^{-1}$ & $9.99 \times 10^{-1}$ & $9.99 \times 10^{-1}$ \\
FFT-only & $1.56 \times 10^{-1}$ & $1.82 \times 10^{-1}$ & $1.07 \times 10^{-1}$ & $8.19 \times 10^{-2}$ \\
PPDNO-Direct & $2.99 \times 10^{-2}$ & $3.28 \times 10^{-2}$ & $6.03 \times 10^{-2}$ & $7.29 \times 10^{-2}$ \\
\textbf{PPDNO} & $\mathbf{8.64 \times 10^{-3}}$ & $\mathbf{8.91 \times 10^{-3}}$ & $\mathbf{9.94 \times 10^{-3}}$ & $\mathbf{7.52 \times 10^{-3}}$ \\
\bottomrule
\end{tabular}%
}
\end{minipage}
\end{table}

\subsection{Exterior Helmholtz Equation on Rose Curves} \label{sec:helmholtz_exterior}
We next consider the exterior Helmholtz DtN map on complex-valued radiating wave fields. Since DeepONet and OT are already dominated in the Laplace experiments, we compare here with the FNO baseline, PPDNO-Direct, and FFT-only.

\paragraph{Harmonic Trunk Encoding.}
For wave problems, the residual $\mathcal{R}^{\text{ext}}_{\boldsymbol{X},k}$ becomes increasingly oscillatory as $k$ grows. We therefore feed the evaluation network a harmonic encoding of $s$:
\begin{equation} \label{eq:harmonic_encoding}
    \gamma(s) = \Big( \cos(s), \sin(s), \cos(2s), \sin(2s), \dots, \cos(n_{\max} s), \sin(n_{\max} s) \Big)^\top \in \mathbb{R}^{2n_{\max}},
\end{equation}
where $n_{\max}$ is a hyper-parameter and the trunk becomes $\mathrm{MLP}_s(\gamma(s))$. The same encoding is used for all models.

\paragraph{Data Generation.}
The boundaries are rose curves
\[
\{(x(t), y(t)) : x(t)=r(t)\cos t,\; y(t)=r(t)\sin t,\; t\in[0,2\pi)\},
\]
with
\[
r(\theta)=R\bigl(1+\epsilon\cos(k_{\text{petal}}\theta)\bigr).
\]
We draw $R\sim\mathcal{U}(0.8,1.2)$ and $\epsilon\sim\mathcal{U}(0.15,0.25)$, fix $k_{\text{petal}}=5$, and generate $n_X=5{,}000$ geometries. Exact radiating solutions are random sums of Hankel fundamental solutions, $u(\boldsymbol{x})=\sum_j c_j H_0^{(1)}(k|\boldsymbol{x}-\boldsymbol{y}_j|)$, with sources $\boldsymbol{y}_j$ placed inside $\Omega$. We use $10$ complex-valued fields per geometry, for $50{,}000$ data pairs.

\paragraph{Performance Comparison.}
We test $k=1$ and $k=10$, using rank $r=14$ and $r=32$ for PPDNO, respectively. Table~\ref{tab:helmholtz_exterior} reports errors on $10{,}000$ held-out samples. At $k=1$, subtracting the principal part gives the best error. At $k=10$, the gap between PPDNO and PPDNO-Direct narrows, while both models remain substantially more accurate than FNO.

\begin{table}[htbp]
\centering
\small
\caption{Average relative errors for the exterior Helmholtz problem on Rose Curve domains under different wave numbers. Errors are computed using the full complex-valued Neumann traces.}
\label{tab:helmholtz_exterior}
\begin{tabular}{lcccc}
\toprule
\multirow{2}{*}{\textbf{Method}}
& \multicolumn{2}{c}{$k=1$}
& \multicolumn{2}{c}{$k=10$} \\
\cmidrule(lr){2-3} \cmidrule(lr){4-5}
& Rel. $L^2$ & Rel. Max
& Rel. $L^2$ & Rel. Max \\
\midrule
FNO
& $1.55 \times 10^{-2}$ & $2.88 \times 10^{-2}$
& $4.75 \times 10^{-2}$ & $7.70 \times 10^{-2}$ \\
FFT-only
& $1.08 \times 10^{0}$ & $9.69 \times 10^{-1}$
& $1.17 \times 10^{0}$ & $1.08 \times 10^{0}$ \\
PPDNO-Direct
& $2.54 \times 10^{-2}$ & $3.57 \times 10^{-2}$
& $\mathbf{1.01 \times 10^{-2}}$ & $\mathbf{1.55 \times 10^{-2}}$ \\
\textbf{PPDNO}
& $\mathbf{9.37 \times 10^{-3}}$ & $\mathbf{1.05 \times 10^{-2}}$
& $1.10 \times 10^{-2}$ & $1.69 \times 10^{-2}$ \\
\bottomrule
\end{tabular}
\end{table}

\begin{remark}
The shrinking gap at larger $k$ is consistent with the operator structure. The Helmholtz singularity remains the $k$-independent logarithmic term extracted by $\mathcal{W}_0$, while derivatives of the oscillatory factor make $\mathcal{R}^{\text{ext}}_{\boldsymbol{X},k}$ dominant as $k$ grows. Thus the bottleneck shifts from the localized principal singularity to the oscillatory residual, where the low-rank architecture and harmonic encoding become decisive.
\end{remark}

\paragraph{Specific Test Cases and OOD Generalization.}
We evaluate zero-shot extrapolation on a fixed rose curve ($R=1.0,\epsilon=0.2$) under two wave fields:
(1) A single point source $-\frac{i}{4}H_0^{(1)}(k|\boldsymbol{x} - \boldsymbol{y}_0|)$ originating from the origin.
(2) An OOD cylindrical wave $u(r,\theta)=H_n^{(1)}(kr)e^{in\theta}$ with $n=1$. For the $k=10$ cases in Figure~\ref{fig:helmholtz_exterior}, PPDNO has component errors mostly near $2\times10^{-3}$, versus $10^{-2}$--$10^{-1}$ for FNO; detailed errors are in Appendix~\ref{app:helmholtz_ood_details}.

\begin{figure}[htbp]
\centering
\captionsetup[subfigure]{font=small,skip=1pt}
\resizebox{0.86\textwidth}{!}{%
\begin{minipage}{\textwidth}
\centering
\begin{subfigure}[c]{0.38\textwidth}
    \centering
    \includegraphics[width=\textwidth]{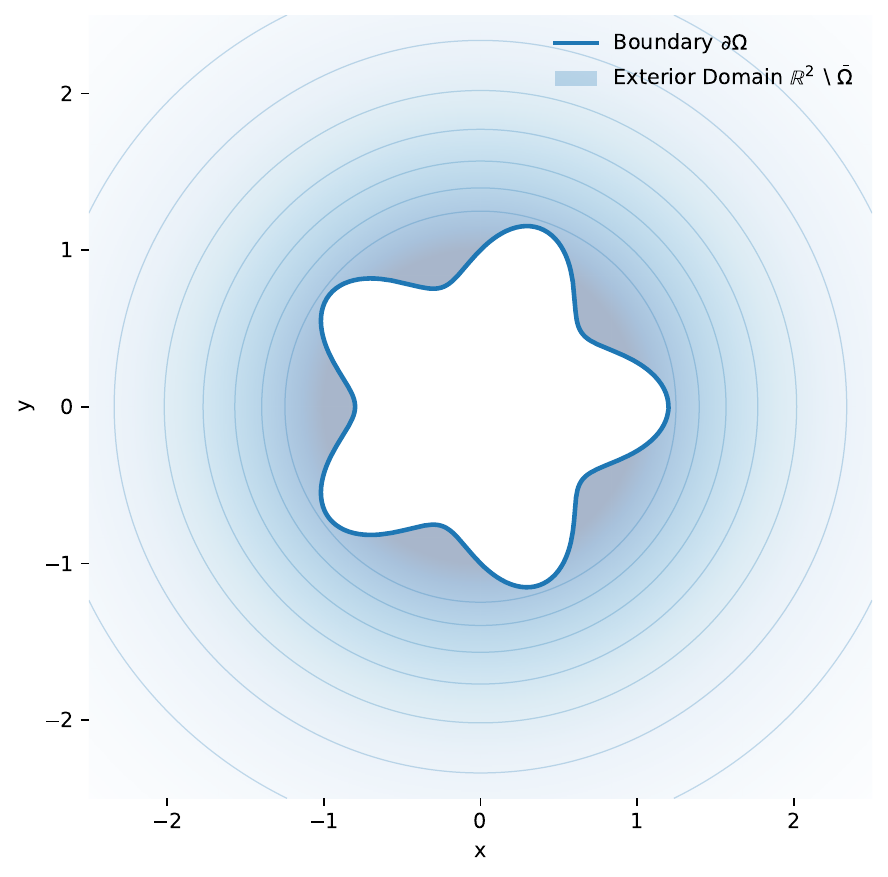}
    \caption{Exterior domain}
\end{subfigure}
\hfill
\begin{minipage}[c]{0.58\textwidth}
\centering
\begin{subfigure}[t]{0.47\linewidth}
    \centering
    \includegraphics[width=\textwidth]{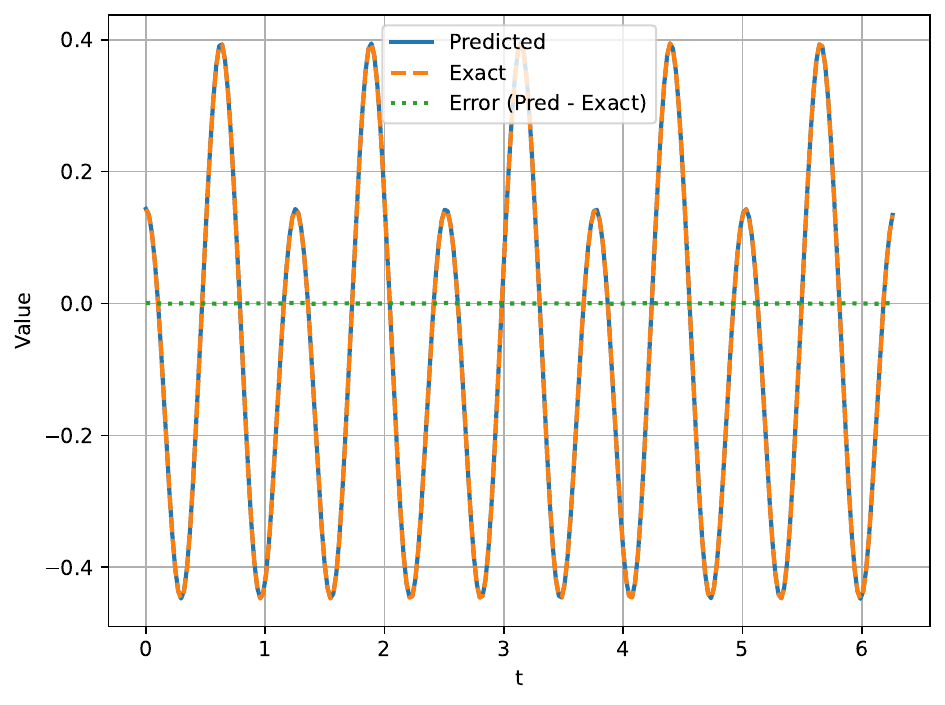}
    \caption{Case 1, real}
\end{subfigure}
\hfill
\begin{subfigure}[t]{0.47\linewidth}
    \centering
    \includegraphics[width=\textwidth]{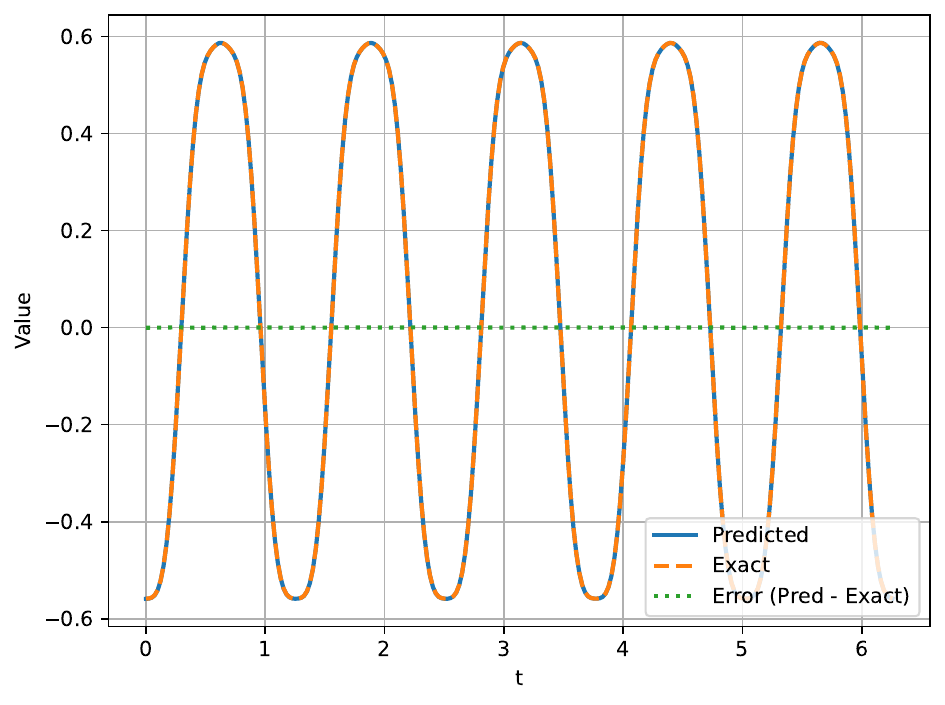}
    \caption{Case 1, imaginary}
\end{subfigure}
\vspace{0.1em}
\begin{subfigure}[t]{0.47\linewidth}
    \centering
    \includegraphics[width=\textwidth]{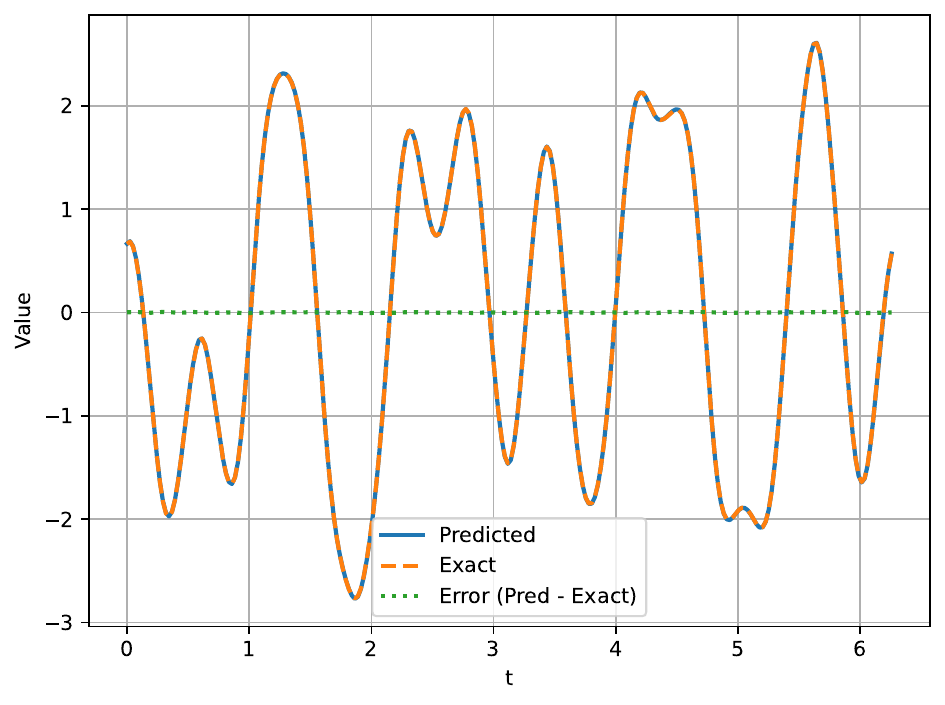}
    \caption{Case 2, real}
\end{subfigure}
\hfill
\begin{subfigure}[t]{0.47\linewidth}
    \centering
    \includegraphics[width=\textwidth]{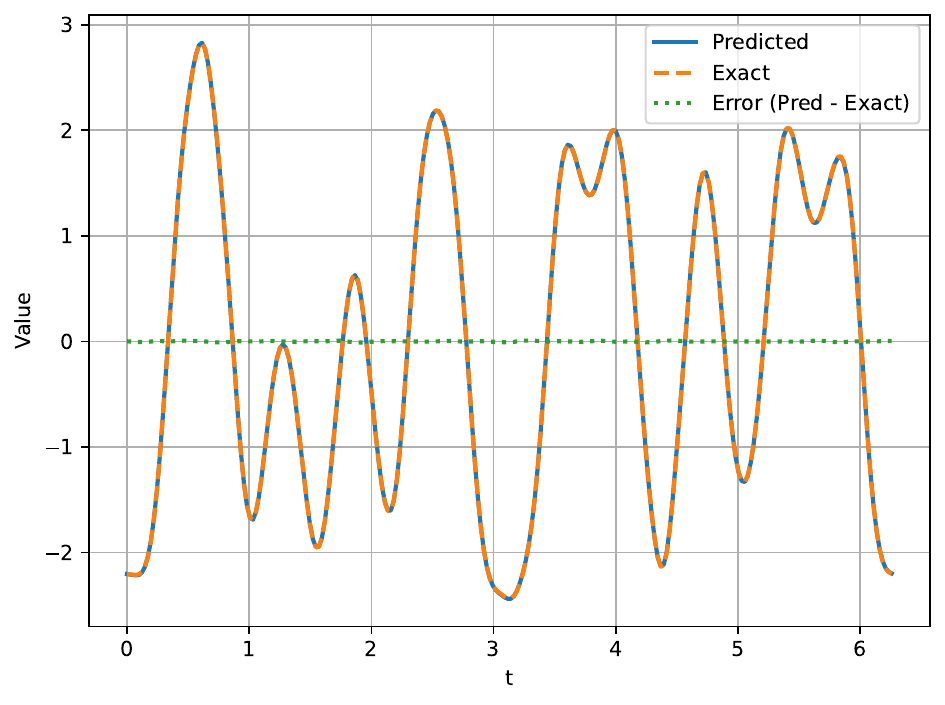}
    \caption{Case 2, imaginary}
\end{subfigure}
\end{minipage}
\end{minipage}%
}
\caption{Predictions of PPDNO versus the exact Neumann trace for the exterior Helmholtz problem with $k = 10$.}
\label{fig:helmholtz_exterior}
\end{figure}

\subsection{Zero-Shot Super-Resolution}
\label{subsec:super_res}
To evaluate the mesh-independence of PPDNO, we test its zero-shot super-resolution capability. Benefiting from exact spectral zero-padding in the analytic pathway and continuous coordinate mapping in $\mathrm{MLP}_{s}$, PPDNO can predict the Neumann trace at dyadically refined resolutions without retraining or altering implicit quadrature weights. We fix the input geometry and boundary data at $N_{\text{in}} = 256$, and query the specific analytic test cases at increasingly finer output grids ($N_{\text{query}} = 512, 1024, 2048$). As detailed in Table~\ref{tab:super_res_total} in Appendix~\ref{app_resolution}, PPDNO maintains highly stable relative $L_2$ errors across all refinements, \textbf{consistently staying at a low error level of around $2.8 \times 10^{-3}$ for the Laplace ellipse case even up to $N=2048$}.

\section{Conclusion}
In this paper, we introduced Principal-Part Decomposed Neural Operators (PPDNO), a hybrid analytic-neural framework for learning Dirichlet-to-Neumann (DtN) maps across varying domains. By embedding the boundary integral decomposition directly into the architecture, PPDNO computes the universal, singular principal part exactly via FFT, leaving only the smoother, geometry-dependent residual to be learned by a low-rank network. This strict preservation of the analytic linear structure effectively mitigates the spectral bias inherent in black-box models and drastically simplifies the learning landscape. Extensive experiments on interior Laplace and exterior Helmholtz problems demonstrate that PPDNO achieves superior accuracy and enables robust zero-shot generalization to out-of-distribution boundary data with minor inference overhead. Our results highlight that integrating rigorous analytic operator structures into neural architectures provides a highly effective pathway for building scalable surrogate models for complex boundary value problems. The present study focuses on smooth two-dimensional boundaries and moderate-frequency wave regimes, where the principal-part decomposition is directly available and numerically stable. Future work will explore extensions to three-dimensional surfaces, less regular or dynamically evolving boundaries, and higher-frequency boundary PDEs.

\section{Acknowledgement}
This work is partially supported by the National Natural Science Foundation of China in the Division of Mathematical Sciences (Project No. 12471342) and the fundamental research funds for the central universities.

\bibliographystyle{plain}
\bibliography{ref}

@book{hsiao2022boundary,
  title={Boundary Integral Equations},
  author={Hsiao, G.C. and Wendland, W.L.},
  isbn={9783030711290},
  series={Applied Mathematical Sciences},
  url={https://books.google.com.hk/books?id=xIzwzgEACAAJ},
  year={2022},
  publisher={Springer International Publishing}
}

@book{kress1989linear,
  title={Linear integral equations},
  author={Kress, Rainer and Maz'ya, Vladimir and Kozlov, Vladimir},
  volume={82},
  year={1989},
  publisher={Springer}
}

@book{sauter2010boundary,
  title={Boundary Element Methods},
  author={Sauter, S.A. and Schwab, C.},
  isbn={9783540680932},
  series={Springer Series in Computational Mathematics},
  url={https://books.google.com.hk/books?id=yEFu7sVW3LEC},
  year={2010},
  publisher={Springer Berlin Heidelberg}
}

@book{mclean2000strongly,
  title={Strongly Elliptic Systems and Boundary Integral Equations},
  author={McLean, W.C.H.},
  isbn={9780521663755},
  lccn={99030938},
  url={https://books.google.com.hk/books?id=RILqjEeMfK0C},
  year={2000},
  publisher={Cambridge University Press}
}

@article{calderon2006inverse,
  title={On an inverse boundary value problem},
  author={Calder{\'o}n, Alberto P},
  journal={Computational \& Applied Mathematics},
  volume={25},
  number={2-3},
  pages={133--138},
  year={2006},
  publisher={SciELO Brasil}
}

@article{sylvester1987global,
  title={A global uniqueness theorem for an inverse boundary value problem},
  author={Sylvester, John and Uhlmann, Gunther},
  journal={Annals of mathematics},
  pages={153--169},
  year={1987},
  publisher={JSTOR}
}

@book{toselli2004domain,
  title={Domain Decomposition Methods - Algorithms and Theory},
  author={Toselli, A. and Widlund, O.},
  isbn={9783540206965},
  lccn={2004113304},
  series={Springer Series in Computational Mathematics},
  url={https://books.google.com.hk/books?id=tpSPx68R3KwC},
  year={2004},
  publisher={Springer Berlin Heidelberg}
}

@article{lu2021learning,
  title={Learning nonlinear operators via DeepONet based on the universal approximation theorem of operators},
  author={Lu, Lu and Jin, Pengzhan and Pang, Guofei and Zhang, Zhongqiang and Karniadakis, George Em},
  journal={Nature machine intelligence},
  volume={3},
  number={3},
  pages={218--229},
  year={2021},
  publisher={Nature Publishing Group UK London}
}

@inproceedings{
li2021fourier,
title={Fourier Neural Operator for Parametric Partial Differential Equations},
author={Zongyi Li and Nikola Borislavov Kovachki and Kamyar Azizzadenesheli and Burigede liu and Kaushik Bhattacharya and Andrew Stuart and Anima Anandkumar},
booktitle={International Conference on Learning Representations},
year={2021},
}

@inproceedings{anandkumar2020neural,
  title={Neural operator: Graph kernel network for partial differential equations},
  author={Anandkumar, Anima and Azizzadenesheli, Kamyar and Bhattacharya, Kaushik and Kovachki, Nikola and Li, Zongyi and Liu, Burigede and Stuart, Andrew},
  booktitle={ICLR 2020 workshop on integration of deep neural models and differential equations},
  year={2020}
}

@article{kovachki2023neural,
  title={Neural operator: Learning maps between function spaces with applications to pdes},
  author={Kovachki, Nikola and Li, Zongyi and Liu, Burigede and Azizzadenesheli, Kamyar and Bhattacharya, Kaushik and Stuart, Andrew and Anandkumar, Anima},
  journal={Journal of Machine Learning Research},
  volume={24},
  number={89},
  pages={1--97},
  year={2023}
}

@article{li2021physics,
author = {Li, Zongyi and Zheng, Hongkai and Kovachki, Nikola and Jin, David and Chen, Haoxuan and Liu, Burigede and Azizzadenesheli, Kamyar and Anandkumar, Anima},
title = {Physics-Informed Neural Operator for Learning Partial Differential Equations},
year = {2024},
issue_date = {September 2024},
publisher = {Association for Computing Machinery},
address = {New York, NY, USA},
volume = {1},
number = {3},
url = {https://doi.org/10.1145/3648506},
doi = {10.1145/3648506},
journal = {ACM / IMS J. Data Sci.},
month = may,
articleno = {9},
numpages = {27}
}

@article{
li2023transformer,
title={Transformer for Partial Differential Equations{\textquoteright} Operator Learning},
author={Zijie Li and Kazem Meidani and Amir Barati Farimani},
journal={Transactions on Machine Learning Research},
issn={2835-8856},
year={2023},
url={https://openreview.net/forum?id=EPPqt3uERT},
note={}
}

@article{cao2021choose,
  title={Choose a transformer: Fourier or galerkin},
  author={Cao, Shuhao},
  journal={Advances in neural information processing systems},
  volume={34},
  pages={24924--24940},
  year={2021}
}

@article{li2023geo,
  title={Fourier Neural Operator with Learned Deformations for PDEs on General Geometries},
  author={Li, Zongyi and Huang, Daniel Zhengyu and Liu, Burigede and Anandkumar, Anima},
  journal={Journal of Machine Learning Research},
  volume={24},
  number={388},
  pages={1--26},
  year={2023}
}

@inproceedings{
wang2024beno,
title={{BENO}: Boundary-embedded Neural Operators for Elliptic {PDE}s},
author={Haixin Wang and Jiaxin LI and Anubhav Dwivedi and Kentaro Hara and Tailin Wu},
booktitle={The Twelfth International Conference on Learning Representations},
year={2024},
url={https://openreview.net/forum?id=ZZTkLDRmkg}
}

@inproceedings{lotzsch2022learning,
title={Learning the Solution Operator of Boundary Value Problems using Graph Neural Networks},
author={Winfried L{\"o}tzsch and Simon Ohler and Johannes Otterbach},
booktitle={ICML 2022 2nd AI for Science Workshop},
year={2022},
url={https://openreview.net/forum?id=4vx9FQA7wiC}
}

@ARTICLE{10535080,
  author={Fang, Zhiwei and Wang, Sifan and Perdikaris, Paris},
  journal={Neural Computation}, 
  title={Learning Only on Boundaries: A Physics-Informed Neural Operator for Solving Parametric Partial Differential Equations in Complex Geometries}, 
  year={2024},
  volume={36},
  number={3},
  pages={475-498},
  doi={10.1162/neco_a_01647}}

@article{MELCHERS2026118893,
title = {Neural Green’s operators for parametric partial differential equations},
journal = {Computer Methods in Applied Mechanics and Engineering},
volume = {455},
pages = {118893},
year = {2026},
issn = {0045-7825},
doi = {https://doi.org/10.1016/j.cma.2026.118893},
url = {https://www.sciencedirect.com/science/article/pii/S0045782526001660},
author = {H.A. Melchers and J.H.M. Prins and M.R.A. Abdelmalik}
}

@article{MIONet,
author = {Jin, Pengzhan and Meng, Shuai and Lu, Lu},
title = {MIONet: Learning Multiple-Input Operators via Tensor Product},
journal = {SIAM Journal on Scientific Computing},
volume = {44},
number = {6},
pages = {A3490-A3514},
year = {2022},
doi = {10.1137/22M1477751},
URL = {https://doi.org/10.1137/22M1477751},
eprint = {https://doi.org/10.1137/22M1477751}
}

@inproceedings{
tran2023factorized,
title={Factorized Fourier Neural Operators},
author={Alasdair Tran and Alexander Mathews and Lexing Xie and Cheng Soon Ong},
booktitle={The Eleventh International Conference on Learning Representations },
year={2023},
url={https://openreview.net/forum?id=tmIiMPl4IPa}
}

@article{Cai_2021,
   title={{DeepM\&Mnet}: Inferring the electroconvection multiphysics fields based on operator approximation by neural networks},
   volume={436},
   ISSN={0021-9991},
   url={http://dx.doi.org/10.1016/j.jcp.2021.110296},
   DOI={10.1016/j.jcp.2021.110296},
   journal={Journal of Computational Physics},
   publisher={Elsevier BV},
   author={Cai, Shengze and Wang, Zhicheng and Lu, Lu and Zaki, Tamer A. and Karniadakis, George Em},
   year={2021},
   pages={110296} 
}

@inproceedings{GNOT,
author = {Hao, Zhongkai and Wang, Zhengyi and Su, Hang and Ying, Chengyang and Dong, Yinpeng and Liu, Songming and Cheng, Ze and Song, Jian and Zhu, Jun},
title = {GNOT: a general neural operator transformer for operator learning},
year = {2023},
publisher = {JMLR.org},
booktitle = {Proceedings of the 40th International Conference on Machine Learning},
articleno = {509},
numpages = {14},
location = {Honolulu, Hawaii, USA},
series = {ICML'23}
}

@inproceedings{
serrano2023operator,
title={Operator Learning with Neural Fields: Tackling {PDE}s on General Geometries},
author={Louis Serrano and Lise Le Boudec and Armand Kassa{\"\i} Koupa{\"\i} and Thomas X Wang and Yuan Yin and Jean-No{\"e}l Vittaut and patrick gallinari},
booktitle={Thirty-seventh Conference on Neural Information Processing Systems},
year={2023},
url={https://openreview.net/forum?id=4jEjq5nhg1}
}

@inproceedings{
seidman2022nomad,
title={{NOMAD}: Nonlinear Manifold Decoders for Operator Learning},
author={Jacob H Seidman and Georgios Kissas and Paris Perdikaris and George J. Pappas},
booktitle={Advances in Neural Information Processing Systems},
editor={Alice H. Oh and Alekh Agarwal and Danielle Belgrave and Kyunghyun Cho},
year={2022},
url={https://openreview.net/forum?id=5OWV-sZvMl}
}

@article{yin2024scalable,
  title={A scalable framework for learning the geometry-dependent solution operators of partial differential equations},
  author={Yin, Minglang and Charon, Nicolas and Brody, Ryan and Lu, Lu and Trayanova, Natalia and Maggioni, Mauro},
  journal={Nature computational science},
  volume={4},
  number={12},
  pages={928--940},
  year={2024},
  publisher={Nature Publishing Group US New York}
}

@article{wang2021learning,
  title={Learning the solution operator of parametric partial differential equations with physics-informed DeepONets},
  author={Wang, Sifan and Wang, Hanwen and Perdikaris, Paris},
  journal={Science advances},
  volume={7},
  number={40},
  pages={eabi8605},
  year={2021},
  publisher={American Association for the Advancement of Science}
}

@article{GU2026119058,
title = {A novel deep learning-enhanced boundary element framework for three-dimensional elasticity},
journal = {Computer Methods in Applied Mechanics and Engineering},
volume = {458},
pages = {119058},
year = {2026},
issn = {0045-7825},
doi = {https://doi.org/10.1016/j.cma.2026.119058},
url = {https://www.sciencedirect.com/science/article/pii/S0045782526003312},
author = {Yan Gu and Chuanzeng Zhang and Chaofeng Lv and Yang Yang and Vladimir Babeshko}
}

@Article{CSIAM-AM-4-275,
author = {Lin, Guochang and Hu, Pipi and Chen, Fukai and Chen, Xiang and Chen, Junqing and Wang, Jun and Shi, Zuoqiang},
title = {BINet: Learn to Solve Partial Differential Equations with Boundary Integral Networks},
journal = {CSIAM Transactions on Applied Mathematics},
year = {2023},
volume = {4},
number = {2},
pages = {275--305},
issn = {2708-0579}
}

@article{lin2023bi,
  title={BI-GreenNet: learning Green’s functions by boundary integral network},
  author={Lin, Guochang and Chen, Fukai and Hu, Pipi and Chen, Xiang and Chen, Junqing and Wang, Jun and Shi, Zuoqiang},
  journal={Communications in Mathematics and Statistics},
  volume={11},
  number={1},
  pages={103--129},
  year={2023},
  publisher={Springer}
}

@article{gin2021deepgreen,
  title={DeepGreen: deep learning of Green’s functions for nonlinear boundary value problems},
  author={Gin, Craig R and Shea, Daniel E and Brunton, Steven L and Kutz, J Nathan},
  journal={Scientific reports},
  volume={11},
  number={1},
  pages={21614},
  year={2021},
  publisher={Nature Publishing Group UK London}
}

@article{raissi2019physics,
  title={Physics-informed neural networks: A deep learning framework for solving forward and inverse problems involving nonlinear partial differential equations},
  author={Raissi, Maziar and Perdikaris, Paris and Karniadakis, George E},
  journal={Journal of Computational physics},
  volume={378},
  pages={686--707},
  year={2019},
  publisher={Elsevier}
}

@article{yu2018deep,
  title={The deep Ritz method: a deep learning-based numerical algorithm for solving variational problems},
  author={Yu, Bing and others},
  journal={Communications in Mathematics and Statistics},
  volume={6},
  number={1},
  pages={1--12},
  year={2018},
  publisher={Springer}
}

@article{sirignano2018dgm,
  title={DGM: A deep learning algorithm for solving partial differential equations},
  author={Sirignano, Justin and Spiliopoulos, Konstantinos},
  journal={Journal of computational physics},
  volume={375},
  pages={1339--1364},
  year={2018},
  publisher={Elsevier}
}

@article{yang2021local,
  title={A local deep learning method for solving high order partial differential equations},
  author={Yang, Jiang and Zhu, Quanhui},
  journal={Numerical Mathematics-Theory Methods and Applications},
  year={2021}
}

@inproceedings{rahaman2019spectral,
  title={On the spectral bias of neural networks},
  author={Rahaman, Nasim and Baratin, Aristide and Arpit, Devansh and Draxler, Felix and Lin, Min and Hamprecht, Fred and Bengio, Yoshua and Courville, Aaron},
  booktitle={International conference on machine learning},
  pages={5301--5310},
  year={2019},
  organization={PMLR}
}

@article{keller1989exact,
  title={Exact non-reflecting boundary conditions},
  author={Keller, Joseph B and Givoli, Dan},
  journal={Journal of computational physics},
  volume={82},
  number={1},
  pages={172--192},
  year={1989},
  publisher={Elsevier}
}

@article{rahman2022u,
  title={U-NO: U-shaped Neural Operators},
  author={Rahman, Md Ashiqur and Ross, Zachary E and Azizzadenesheli, Kamyar},
  journal={arXiv preprint arXiv:2204.11127},
  year={2022}
}

@inproceedings{
brandstetter2022message,
title={Message Passing Neural {PDE} Solvers},
author={Johannes Brandstetter and Daniel E. Worrall and Max Welling},
booktitle={International Conference on Learning Representations},
year={2022},
url={https://openreview.net/forum?id=vSix3HPYKSU}
}

@inproceedings{
wu2026iterative,
title={Iterative Training of Physics-Informed Neural Networks with Fourier-enhanced Features},
author={Yulun Wu and Miguel Aguiar and Karl Henrik Johansson and Matthieu Barreau},
booktitle={The Fourteenth International Conference on Learning Representations},
year={2026},
url={https://openreview.net/forum?id=ybffyf7LE7}
}

\newpage

\appendix

\section{Green's Function in the Free Space} \label{green's func}
This appendix provides the definitions and explicit forms of the free-space Green's functions for the Laplace and Helmholtz equations. By definition, the Green's function $G(\boldsymbol{x})$ satisfies the governing differential equation with a Dirac delta function $\delta(\boldsymbol{x})$ as the source term.

\subsection{Green's Function for the Laplace Equation} \label{green's func1}
The free-space Green's function for the Laplace equation satisfies:
\begin{equation} \label{eq:laplace_def}
\Delta G(\boldsymbol{x}) = \delta(\boldsymbol{x})
\end{equation}
where $\Delta$ is the Laplacian operator. The explicit forms in two-dimensional (2D) and three-dimensional (3D) spaces are given by:
\begin{equation} \label{eq:laplace_2d}
G(\boldsymbol{x}) = \frac{1}{2\pi} \ln|\boldsymbol{x}| \quad (d=2)
\end{equation}
\begin{equation} \label{eq:laplace_3d}
G(\boldsymbol{x}) = -\frac{1}{4\pi |\boldsymbol{x}|} \quad (d=3)
\end{equation}

\subsection{Green's Function for the Helmholtz Equation} \label{green's func2}
The free-space Green's function for the Helmholtz equation satisfies:
\begin{equation} \label{eq:helmholtz_def}
\left( \Delta + k^2 \right) G_k(\boldsymbol{x}) = \delta(\boldsymbol{x})
\end{equation}
where $k$ is the wavenumber. Under the outward radiation condition at infinity, the explicit forms are expressed as:
\begin{equation} \label{eq:helmholtz_2d}
G_k(\boldsymbol{x}) = -\frac{i}{4} H_0^{(1)}(k|\boldsymbol{x}|) \quad (d=2)
\end{equation}
\begin{equation} \label{eq:helmholtz_3d}
G_k(\boldsymbol{x}) = -\frac{e^{ik|\boldsymbol{x}|}}{4\pi |\boldsymbol{x}|} \quad (d=3)
\end{equation}
where $H_0^{(1)}$ is the Hankel function of the first kind and zeroth order.

\section{Details of the DtN Principal-Part Decomposition}
\label{app:dtn_decomposition_details}

This appendix records the boundary-integral calculation used in Section~\ref{Decomposition of DtN}. 

To make the integral operator more explicit, we parameterize the closed curve $\Gamma$ by $\boldsymbol{X}:\T \to\mathbb{R}^2$ with $\T=\mathbb{R}/(2\pi\mathbb{Z})$. The solution in the form of a double-layer potential is given by
\begin{equation} 
u(\boldsymbol{x})=
\frac{1}{2\pi}
\int_{\T}
\frac{(\boldsymbol{x}-\boldsymbol{X}(t))\cdot R \boldsymbol{X}'(t)}{|\boldsymbol{x}-\boldsymbol{X}(t)|^2}
\varphi(\boldsymbol{X}(t))\,dt,
\end{equation}
where
\begin{equation} 
R=
\begin{bmatrix}
0 & 1\\
-1 & 0
\end{bmatrix}.
\end{equation}
The boundary integral equation is given by
\begin{equation} 
\frac12\varphi(\boldsymbol{X}(s))
+
\frac{1}{2\pi}
\int_{\T}
\frac{\Delta \boldsymbol{X}\cdot R\boldsymbol{X}'(t)}{|\Delta \boldsymbol{X}|^2}\varphi(\boldsymbol{X}(t))\,dt
=
g(\boldsymbol{X}(s)),
\qquad s\in \T,
\end{equation}
where $\Delta \boldsymbol{X}=\boldsymbol{X}(s)-\boldsymbol{X}(t)$. The normal derivative is given by
\begin{equation} 
\partial_{\boldsymbol{n}} u(\boldsymbol{X}(s))
=
\frac{1}{2\pi}
\frac{R\boldsymbol{X}'(s)}{|\boldsymbol{X}'(s)|}
\cdot
\int_{\T}
\left(
\frac{1}{|\Delta \boldsymbol{X}|^2}I
-
2\frac{\Delta \boldsymbol{X}\otimes\Delta \boldsymbol{X}}{|\Delta \boldsymbol{X}|^4}
\right)
R\boldsymbol{X}'(t)\varphi(\boldsymbol{X}(t))\,dt,
\end{equation}
where $I$ denotes the $2 \times 2$ identity matrix and $\otimes$ denotes the outer product. Let $\breve g=g\circ \boldsymbol{X}$, $\breve\phi=\phi\circ \boldsymbol{X}$, $\breve\psi=(\partial_{\boldsymbol{n}} u\circ \boldsymbol{X})|\boldsymbol{X}'|$, and $\mathcal{I}$ denote the identity mapping. We have the relations
\begin{equation}
(\mathcal{I}+2\mathcal{K})\breve\varphi=2\breve g,
\qquad
\breve\psi=\mathcal{W}\breve\varphi:=\breve\Lambda_{\boldsymbol{X}}\breve g,
\end{equation}
where
\begin{equation}
\mathcal{K}f(s)=
\frac{1}{2\pi}
\int_{\T}
\frac{\Delta \boldsymbol{X}\cdot R\boldsymbol{X}'(t)}{|\Delta \boldsymbol{X}|^2}f(t)\,dt,
\end{equation}
and
\begin{align}
\mathcal{W}f(s) &= -\frac{1}{2\pi} R\boldsymbol{X}'(s) \cdot \int_{\T} \left( \frac{1}{|\Delta\boldsymbol{X}|^2}I - 2\frac{\Delta\boldsymbol{X} \otimes \Delta\boldsymbol{X}}{|\Delta\boldsymbol{X}|^4} \right) R\boldsymbol{X}'(t)f(t) \, dt \\
&= \frac{1}{2\pi} \boldsymbol{X}'(s) \cdot \int_{\T} \left( \frac{1}{|\Delta\boldsymbol{X}|^2}I - 2\frac{\Delta\boldsymbol{X} \otimes \Delta\boldsymbol{X}}{|\Delta\boldsymbol{X}|^4} \right) \boldsymbol{X}'(t)f(t) \, dt \\
&= -\frac{1}{2\pi} \int_{\T} \partial_s \partial_t \log |\Delta\boldsymbol{X}| f(t) \, dt, \\
&= \frac{1}{2\pi} \partial_s \int_{\T} \log |\Delta\boldsymbol{X}| \partial_t f(t) \, dt, \\
&= \frac{1}{2\pi} \partial_s \int_{\T} \log \left| 2\sin\frac{s-t}{2} \right| \partial_t f(t) \, dt + \frac{1}{2\pi} \partial_s \int_{\T} \log \left| \frac{\Delta\boldsymbol{X}}{2\sin\frac{s-t}{2}} \right| \partial_t f(t) \, dt \\
&:= \mathcal{W}_0 f(s) + \mathcal{W}_1 f(s).
\end{align}
The operator $\mathcal{W}_0$ is independent of $\boldsymbol{X}$ and can be characterized and efficiently computed using the Fourier transform:
\begin{equation} 
\mathcal{W}_0f=\mathcal{F}^{-1}\widehat W_0(\hat{k})\mathcal{F}f,
\qquad
\widehat W_0(\hat{k})=\frac{|\hat{k}|}{2},
\quad \hat{k} \in \mathbb Z,
\end{equation}
where $\mathcal{F}$ is the Fourier transform on the torus $\T$. The operator $\mathcal{W}_1$ is a smoothing operator.

\section{Proofs for the Theoretical Analysis}
\label{app:theory_proofs}

This appendix contains the assumptions, constants, and proofs behind the concise statements in
Section~\ref{sec:theory}. The main text records the operator structure and the resulting rates; here
we keep the detailed boundary-integral mapping argument, the low-rank remainder construction, and the
discretized residual-learning bookkeeping.

\subsection{Mapping Property of the Remainder Operator}
We begin by examining the validity of the decomposition
\(\mathcal{W} = \mathcal{W}_0 + \mathcal{W}_1\) in Section \ref{Decomposition of DtN},
where \(\mathcal{W}_0\) is defined in equation~\eqref{eq19}
and \(\mathcal{W}_1\) is smooth. Indeed, the periodic logarithmic kernel has the Fourier series
\begin{equation}\label{eq:circle-log-fourier}
    \log\abs{2\sin\frac{\theta}{2}}
    =
    -\sum_{k=1}^{\infty}\frac{\cos(k\theta)}{k}
    =
    -\sum_{k\in\Z\setminus\{0\}}\frac{e^{ik\theta}}{2|k|}
\end{equation}
in the distributional sense. Applying $\mc W_0$ to $e^{iks}$ and using
\eqref{eq:circle-log-fourier} gives
\begin{equation}
    \mc W_0 e^{iks}
    =
    \frac{1}{2\pi}\partial_s
    \int_\T
    \log\abs{2\sin\frac{s-t}{2}}
    \partial_t e^{ikt}\,dt
    =
    \frac{|k|}{2}e^{iks},
\end{equation}
with the zero mode mapped to zero. Thus $2\mc W_0$ is exactly the Fourier multiplier $|k|$.

The only non-smooth part of the pulled-back hypersingular operator is therefore the universal circular
kernel contained in $\mc W_0$.

\begin{lemma}[Cancellation of the diagonal singularity]\label{lem:diagonal-cancellation}
Let $\bm X\in C^n(\T;\R^2)$, $n\ge 2$, satisfy the chord-arc lower bound
\begin{equation}
    |\bm X(s)-\bm X(t)|\ge c_1\abs{2\sin\frac{s-t}{2}}.
\end{equation}
Then the double-layer kernel
\begin{equation}
    K_{\bm X}(s,t):=\frac{\paren{\bm X(s)-\bm X(t)}\cdot R\bm X'(t)}
    {|\bm X(s)-\bm X(t)|^2}
\end{equation}
extends continuously across $s=t$; if $n\ge 3$ it extends with $C^{n-2}$ regularity. Moreover
\begin{equation}\label{eq:regular-log-kernel}
    L_{\bm X}(s,t):=
    \log\abs{\frac{\bm X(s)-\bm X(t)}{2\sin\frac{s-t}{2}}}
\end{equation}
extends to a $C^{n-1}$ function on $\T^2$. Thus $\mc K$ has a regular kernel, and, for
$n\ge 3$,
\begin{equation}
    \mc W_1 f(s)
    =
    -\frac{1}{2\pi}\int_\T \partial_s\partial_t L_{\bm X}(s,t)f(t)\,dt
\end{equation}
for smooth periodic $f$. Its kernel is $C^{n-3}$ because $L_{\bm X}\in C^{n-1}(\T^2)$.
\end{lemma}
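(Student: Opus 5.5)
The approach is the standard divided-difference (Hadamard) representation, which factors out the diagonal zero of $\bm X(s)-\bm X(t)$ explicitly. Set $h=s-t$, taken in $(-\pi,\pi]$ and regarded locally near the diagonal. Write
\[
\bm X(s)-\bm X(t)=h\,\bm D(s,t),\qquad \bm D(s,t):=\int_0^1 \bm X'(t+\lambda h)\,d\lambda .
\]
Differentiating under the integral shows $\bm D\in C^{n-1}(\T^2)$ near the diagonal, with $\bm D(t,t)=\bm X'(t)$. Similarly, $2\sin\frac{h}{2}=h\,\sigma(h)$, where $\sigma(h)=\frac{\sin(h/2)}{h/2}$ is real-analytic and positive on $[-\pi,\pi]$.

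\textbf{The log kernel.} Away from the diagonal, $\bm X(s)-\bm X(t)$ and $\sin\frac{s-t}{2}$ are both nonvanishing, so $L_{\bm X}$ is $C^{n}$ there. Near the diagonal,
\[
L_{\bm X}=\log|\bm D(s,t)|-\log\sigma(s-t).
\]
The chord-arc bound gives $|\bm D|\ge c_1\sigma\ge c_1\cdot\frac{2}{\pi}>0$. Hence $\log|\bm D|=\frac12\log(\bm D\cdot\bm D)$ is a composition of a smooth function with a $C^{n-1}$ map, and so $L_{\bm X}\in C^{n-1}(\T^2)$. Periodicity is consistent because both definitions agree on overlaps.

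\textbf{The double-layer kernel.} Use a second-order expansion:
\[
\bm X(s)-\bm X(t)=h\bm X'(t)+h^2\bm E(s,t),\qquad \bm E=\int_0^1(1-\lambda)\bm X''(t+\lambda h)\,d\lambda\in C^{n-2}.
\]
Since $\bm X'(t)\cdot R\bm X'(t)=0$, the numerator equals $h^2\,\bm E\cdot R\bm X'(t)$, while the denominator equals $h^2|\bm D|^2$. Therefore
\[
K_{\bm X}=\frac{\bm E\cdot R\bm X'(t)}{|\bm D|^2},
\]
which is continuous for $n\ge2$ and $C^{n-2}$ for $n\ge 3$, because the denominator is bounded below. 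On the diagonal it gives the familiar curvature value.

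\textbf{The formula for $\mc W_1$.} By the appendix derivation, $\mc W_1 f(s)=\frac1{2\pi}\partial_s\int_\T L_{\bm X}(s,t)\,\partial_tf(t)\,dt$. For $n\ge3$, $L_{\bm X}\in C^{2}$, so we may first integrate by parts in $t$ (no boundary terms on $\T$) and then differentiate under the integral in $s$. This yields the stated formula with kernel $-\frac1{2\pi}\partial_s\partial_tL_{\bm X}\in C^{n-3}$.

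The main obstacle is the bookkeeping that keeps the divided-difference regularity from losing more than one derivative and that establishes the uniform lower bound on $|\bm D|$ globally rather than only near the diagonal. The plan is to handle these with a partition of unity separating a neighborhood of the diagonal from its complement; the chord-arc hypothesis is used precisely on the complement and to control $|\bm D|$.
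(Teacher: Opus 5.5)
Your proposal is correct and follows essentially the same route as the paper, which also writes $\bm X(t+h)-\bm X(t)=hA(t,h)$ and $2\sin\frac h2=hB(h)$, uses $\bm X'(t)\cdot R\bm X'(t)=0$ to pull an $h^2$ out of the double-layer numerator (its double integral $C(t,h)$ equals your $\bm E$ by Fubini), and integrates by parts in $t$ to get the formula for $\mc W_1$. One small difference in your favor: you get the lower bound on $|\bm D|$ directly from the chord-arc hypothesis, whereas the paper invokes $|\bm X'|\ge c_0$, which is not among the lemma's stated hypotheses (though chord-arc implies it); the partition of unity you plan is not needed, because regularity away from the diagonal is immediate.
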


\begin{proof}
The only issue is the diagonal $s=t$. Work in a local lift of the torus and write $h=s-t$. For
$h\ne 0$,
\begin{equation}
    \bm X(t+h)-\bm X(t)
    =
    h A(t,h),
    \qquad
    A(t,h):=\int_0^1 \bm X'(t+\theta h)\,d\theta .
\end{equation}
The function $A$ extends to $h=0$ with $A(t,0)=\bm X'(t)$ and has one fewer derivative than $\bm X$.
Similarly,
\begin{equation}
    2\sin\frac h2=hB(h),
    \qquad
    B(h):=\frac{2\sin(h/2)}{h},\qquad B(0)=1,
\end{equation}
where $B$ is smooth and nonzero near $h=0$. Thus
\begin{equation}
    \frac{|\bm X(t+h)-\bm X(t)|^2}{\abs{2\sin(h/2)}^2}
    =
    \frac{|A(t,h)|^2}{B(h)^2}
\end{equation}
extends regularly to $h=0$ with diagonal value $|\bm X'(t)|^2$. The chord-arc bound keeps this ratio
bounded away from zero globally, so taking the logarithm gives the claimed regular extension of
$L_{\bm X}$.

For the double-layer kernel, the apparent first-order singularity cancels because
$\bm X'(t)\cdot R\bm X'(t)=0$. Indeed,
\begin{equation}
    \begin{aligned}
    \paren{\bm X(t+h)-\bm X(t)}\cdot R\bm X'(t)
    &=
    h\int_0^1
    \paren{\bm X'(t+\theta h)-\bm X'(t)}\cdot R\bm X'(t)\,d\theta .
    \end{aligned}
\end{equation}
For $n\ge 2$ the difference inside the integral may be written as
\begin{equation}
    \bm X'(t+\theta h)-\bm X'(t)
    =
    h\int_0^\theta \bm X''(t+\eta h)\,d\eta .
\end{equation}
Thus
\begin{equation}
    \paren{\bm X(t+h)-\bm X(t)}\cdot R\bm X'(t)
    =
    h^2 C(t,h)\cdot R\bm X'(t),
    \qquad
    C(t,h):=\int_0^1\int_0^\theta \bm X''(t+\eta h)\,d\eta\,d\theta .
\end{equation}
The denominator equals $h^2|A(t,h)|^2$, and $|A(t,0)|=|\bm X'(t)|\ge c_0$. Hence, near the
diagonal,
\begin{equation}
    K_{\bm X}(t+h,t)
    =
    \frac{C(t,h)\cdot R\bm X'(t)}{|A(t,h)|^2}
\end{equation}
for $h\ne 0$, and the right-hand side gives the continuous extension. Its diagonal value is
\begin{equation}
    K_{\bm X}(t,t)
    =
    \frac{1}{2}
    \frac{\bm X''(t)\cdot R\bm X'(t)}{|\bm X'(t)|^2}
\end{equation}
because $C(t,0)=\frac12\bm X''(t)$. Since $C$ is $C^{n-2}$ and the denominator is bounded away from
zero, the quotient is $C^{n-2}$.

Finally,
\begin{equation}
    \mc W_1 f(s)
    =
    \frac{1}{2\pi}\partial_s\int_\T L_{\bm X}(s,t)\partial_t f(t)\,dt .
\end{equation}
For smooth periodic $f$, integration by parts in $t$ has no boundary term, so
\begin{equation}
    \mc W_1 f(s)
    =
    -\frac{1}{2\pi}\int_\T \partial_s\partial_t L_{\bm X}(s,t)f(t)\,dt .
\end{equation}
The kernel regularity follows by differentiating the $C^{n-1}$ function $L_{\bm X}$ once in each
variable.
\end{proof}

For the Laplace equation, Lemma~\ref{lem:diagonal-cancellation} shows that the double-layer operator
$\mc K$ has a regular kernel. The composition $\mc W\mc K(\mc I+2\mc K)^{-1}$ in \eqref{eq20} is smoothing as well:
$\mc K$ first maps rough data into smooth boundary functions, and $\mc W$ acts only after this
smoothing. The uniform versions of these mapping statements over admissible geometries are recorded
in Definition~\ref{def:smooth-kernel}. We have the decomposition \eqref{eq21}.

\begin{theorem}[Mapping property of the remainder operator]\label{thm:pdo_remainder-detail}
Let $\Gamma$ be a $C^\infty$ closed curve for which the interior Dirichlet problem is uniquely solvable.
With the density convention above, the operator
\begin{equation}
    \mc R_{\bm X}:=\breve{\Lambda}_{\bm X}-2\mc W_0
\end{equation}
is smoothing on $\T$: for all Sobolev indices $s$ and all integers $N\ge 0$,
\begin{equation}
    \mc R_{\bm X}:H^s(\T)\to H^{s+N}(\T)
\end{equation}
is bounded. For finite $C^n$ curves, the same argument gives a finite-smoothness kernel whose
regularity is limited by $n$.
\end{theorem}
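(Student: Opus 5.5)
Starting from \eqref{eq20}, we write the remainder explicitly as
\begin{equation*}
    \mc R_{\bm X}
    =
    2\mc W_1-4\mc W\mc K(\mc I+2\mc K)^{-1},
\end{equation*}
and prove that each piece maps $H^s(\T)\to H^{s+N}(\T)$ boundedly for every $s$ and $N$. The identity first holds on smooth $f$ and then extends by density once the mapping bounds are in hand. The only input is Lemma~\ref{lem:diagonal-cancellation} applied with arbitrarily large $n$: for a $C^\infty$ curve the chord-arc bound holds, since the curve is simple and regular, so both $K_{\bm X}$ and $L_{\bm X}$ extend to $C^\infty(\T^2)$.

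\textbf{Step 1 (smooth kernels are smoothing).} An integral operator $Tf(s)=\int_\T k(s,t)f(t)\,dt$ with $k\in C^\infty(\T^2)$ maps $H^{s}(\T)\to H^{s+N}(\T)$ for all $s$ and $N$. For $s\ge0$ this follows from $\partial_s^j Tf=\int \partial_s^j k\,f$ and Cauchy--Schwarz. For negative $s$, pair $f$ against $k(s,\cdot)$: $|\partial_s^jTf(s)|\le \|\partial_s^j k(s,\cdot)\|_{H^{-s}}\|f\|_{H^{s}}$. Equivalently, the Fourier coefficients $\hat k(p,q)$ decay faster than any power of $|p|+|q|$. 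By Lemma~\ref{lem:diagonal-cancellation}, $\mc K$ has kernel $K_{\bm X}/(2\pi)$ and $\mc W_1$ has kernel $-\partial_s\partial_t L_{\bm X}/(2\pi)$, both smooth, so $\mc K$ and $\mc W_1$ are smoothing of every order. The integration-by-parts formula for $\mc W_1$ extends from smooth $f$ to distributions by continuity.

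\textbf{Step 2 (the composite term).} The operator $\mc W=\mc W_0+\mc W_1$ is bounded $H^{t+1}\to H^{t}$ for every $t$, because $\mc W_0$ is the multiplier $|k|/2$ and $\mc W_1$ is smoothing. It remains to show that $(\mc I+2\mc K)^{-1}$ exists and is bounded on every $H^s$. Since $\mc K$ is compact on $H^s$, the operator $\mc I+2\mc K$ is Fredholm of index zero, and injectivity suffices. Injectivity on $L^2$ is the classical fact for the interior double-layer equation: a null density produces an exterior harmonic field with vanishing Neumann data, so the density is zero. This uses the unique solvability hypothesis together with the density convention fixed in Section~\ref{Decomposition of DtN}.

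For other $s$, any kernel element $\varphi\in H^s$ satisfies $\varphi=-2\mc K\varphi\in C^\infty$, so the kernel is the same for every $s$. Hence $(\mc I+2\mc K)^{-1}$ is bounded on every $H^s$. The chain
\begin{equation*}
    H^s\xrightarrow{(\mc I+2\mc K)^{-1}}H^s\xrightarrow{\mc K}H^{s+N+1}\xrightarrow{\mc W}H^{s+N}
\end{equation*}
then gives the claim for the composite term. Adding the bound for $2\mc W_1$ from Step 1 yields boundedness of $\mc R_{\bm X}:H^s\to H^{s+N}$.

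\textbf{Finite smoothness.} For a $C^n$ curve, the same chain runs with kernels in $C^{n-2}$ and $C^{n-3}$. This gives smoothing by a finite number of orders, roughly $n-3$, with $s$ restricted to a compatible range.

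\textbf{Main obstacle.} The delicate step is the invertibility of $\mc I+2\mc K$. This requires tracking the sign and orientation conventions (the factor $R\bm X'(t)$, the interior jump $+\tfrac12$) to confirm that $-\tfrac12$ is not an eigenvalue of the parameterized $\mc K$, and handling the constant mode correctly. My first attempt would be to reduce to the classical result in Kress, \emph{Linear Integral Equations}, on the interior Dirichlet double-layer equation, after matching conventions. If the conventions introduce a one-dimensional kernel, I would instead argue modulo constants: $\mc W$ annihilates constants, so that mode does not affect $\mc R_{\bm X}$.
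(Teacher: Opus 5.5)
Your proof is correct and follows the paper's argument: the same splitting $\mc R_{\bm X}=2\mc W_1-4\mc W\mc K(\mc I+2\mc K)^{-1}$, smooth kernels from Lemma~\ref{lem:diagonal-cancellation} making $\mc K$ and $\mc W_1$ smoothing of every order, and the chain $H^s\to H^s\to H^{s+N+1}\to H^{s+N}$ for the composite term. You go beyond the paper only in proving that $\mc I+2\mc K$ is invertible on every $H^s$ (Fredholm index zero, interior uniqueness, smoothness of null vectors), where the paper simply cites standard theory, and your worry about conventions there is justified: with the appendix's explicit kernel and a counterclockwise parameterization, the unit circle gives $\mc K f=-\frac{1}{4\pi}\int_\T f\,dt$, so $\mc I+2\mc K$ annihilates constants. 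The fix is to use the sign convention that actually matches the interior jump relation; working modulo constants would not restore solvability for all data.
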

\begin{proof}
The jump relations and invertibility of the second-kind double-layer equation on smooth curves are
standard consequences of Fredholm boundary-integral theory \cite{mclean2000strongly, kress1989linear}. Hence $(\mc I+2\mc K)^{-1}$ is bounded on the Sobolev and H\"older scales used here. By
Lemma~\ref{lem:diagonal-cancellation}, $\mc K$ has a smooth kernel when $\bm X$ is $C^\infty$, and the
non-circular part $\mc W_1$ of the hypersingular operator also has a smooth kernel.

We first record the mapping consequence of having a smooth kernel. If
\begin{equation}
    Af(s)=\int_\T A(s,t)f(t)\,dt
\end{equation}
and $A\in C^\infty(\T^2)$, then $A:H^s(\T)\to H^q(\T)$ is bounded for all $s,q$. To see this, write
the two-variable Fourier coefficients as $A_{\ell j}$. Smoothness gives, for every $M_1,M_2$,
\begin{equation}
    |A_{\ell j}|
    \le
    C_{M_1,M_2}\langle \ell\rangle^{-M_1}\langle j\rangle^{-M_2},
    \qquad
    \langle k\rangle:=(1+k^2)^{1/2}.
\end{equation}
The Fourier coefficients of $Af$ are finite linear combinations of
$\sum_j A_{\ell,-j}\widehat f_j$. Choosing $M_1>q+\frac12$ and
$M_2>\frac12-s$ and applying Cauchy--Schwarz gives
\begin{equation}
    \begin{aligned}
    \norm{Af}_{H^q}^2
    &\le
    C
    \sum_{\ell\in\Z}\langle \ell\rangle^{2q-2M_1}
    \left(
    \sum_{j\in\Z}\langle j\rangle^{-M_2}|\widehat f_j|
    \right)^2\\
    &\le
    C
    \sum_{\ell\in\Z}\langle \ell\rangle^{2q-2M_1}
    \sum_{j\in\Z}\langle j\rangle^{-2(M_2+s)}
    \norm{f}_{H^s}^2
    \le
    C\norm{f}_{H^s}^2 .
    \end{aligned}
\end{equation}
Thus both $\mc K$ and $\mc W_1$ gain arbitrarily many Sobolev derivatives.

Using the identity displayed before the theorem,
\begin{equation}
    \breve{\Lambda}_{\bm X}
    =
    2\mc W_0
    +
    2\mc W_1
    -
    4\mc W\mc K(\mc I+2\mc K)^{-1}.
\end{equation}
The term $2\mc W_1$ is smoothing by the preceding paragraph. For the last term, fix $s\in\R$ and an
integer $N\ge 0$. Given $f\in H^s(\T)$, set
\begin{equation}
    g:=(\mc I+2\mc K)^{-1}f.
\end{equation}
Bounded invertibility gives $\norm{g}_{H^s}\le C\norm{f}_{H^s}$. The smooth-kernel estimate for
$\mc K$ with target index $s+N+1$ gives
\begin{equation}
    \norm{\mc K g}_{H^{s+N+1}}
    \le
    C\norm{g}_{H^s}
    \le
    C\norm{f}_{H^s}.
\end{equation}
The hypersingular operator $\mc W=\mc W_0+\mc W_1$ is bounded $H^a(\T)\to H^{a-1}(\T)$: $\mc W_0$
has Fourier symbol $|k|/2$, and $\mc W_1$ is smoothing. Therefore
\begin{equation}
    \norm{\mc W\mc K(\mc I+2\mc K)^{-1}f}_{H^{s+N}}
    \le
    C\norm{f}_{H^s}.
\end{equation}
Combining this estimate with the smoothing bound for $2\mc W_1$ proves
$\mc R_{\bm X}:H^s(\T)\to H^{s+N}(\T)$. If $\bm X$ has only $C^n$ regularity, the same calculation
applies only up to the number of kernel derivatives supplied by
Lemma~\ref{lem:diagonal-cancellation}; this is the finite-regularity situation encoded in
Definition~\ref{def:smooth-kernel}.
\end{proof}

Theorem~\ref{thm:pdo_remainder} is the reason for subtracting $2\mc W_0$ before training. After the
principal part is removed, the neural approximation is applied only to the compact
geometry-dependent remainder, not to the order-one Fourier multiplier.

\subsection{Remainder Class and Low-Rank Approximation}

The theory is formulated for the family of remainder operators
$\{\mc R_{\bm X}\}_{\bm X\in\mc A_n}$ in equation \eqref{eq21}. Since $2\mc W_0$ is known,
the unknown object to approximate is $\mc R_{\bm X}$, uniformly over the geometry class.

Let $n\ge 2$ and fix positive constants $M,c_0,c_1$. We define the admissible class
\begin{equation}
    \mc A_n(M,c_0,c_1)
    :=
    \left\{
    \begin{aligned}
        \bm X\in C^n(\T;\R^2):\quad
        &\norm{\bm X}_{C^n(\T)}\le M,\\
        &|\bm X'(s)|\ge c_0,\\
        &|\bm X(s)-\bm X(t)|\ge c_1\abs{2\sin\frac{s-t}{2}}
    \end{aligned}
    \right\}.
\end{equation}
The lower bounds prevent degenerating speeds and self-intersections, so the family describes smooth
embedded curves with uniform constants.

\begin{definition}[Admissible remainder family]\label{def:smooth-kernel}
Fix integers $m\ge 1$, $0\le \mu\le m$, and $0\le \nu<n$. We assume that for every
$\bm X\in \mc A_n(M,c_0,c_1)$ the remainder operator has the representation
\begin{equation}
    \mc R_{\bm X}f(s)=\int_{\T} R(s,t;\bm X)f(t)\,dt
\end{equation}
with
\begin{equation}
    \sup_{\bm X\in\mc A_n}\norm{R(\cdot,\cdot;\bm X)}_{C^m(\T^2)}\le M_R,
\end{equation}
and
\begin{equation}\label{eq:lipschitz-kernel}
    \norm{R(\cdot,\cdot;\bm X)-R(\cdot,\cdot;\bm Y)}_{C^\mu(\T^2)}
    \le
    L_R \norm{\bm X-\bm Y}_{C^\nu(\T)}.
\end{equation}
This assumption is the finite-regularity version of Theorem~\ref{thm:pdo_remainder}; it records only the
uniform kernel bounds needed for approximation and discretization.
\end{definition}

Because $R(\cdot,\cdot;\bm X)$ is continuous on the compact torus, $\mc R_{\bm X}$ is Hilbert--Schmidt
and therefore compact as an operator $L^2(\T)\to L^2(\T)$. We repeatedly use the following estimates:
if $(\mc Af)(s)=\int_\T K(s,t)f(t)\,dt$ with $K\in L^\infty(\T^2)$, then H\"older's inequality gives
\begin{align}
    \norm{\mc A}_{L^2(\T)\to L^\infty(\T)}
    &\le
    |\T|^{1/2}\norm{K}_{L^\infty(\T^2)},\label{eq:kernel-to-operator-infty}\\
    \norm{\mc A}_{L^2(\T)\to L^2(\T)}
    &\le
    |\T|\norm{K}_{L^\infty(\T^2)},\label{eq:kernel-to-operator-two}\\
    \norm{\mc Af}_{L^\infty(\T)}
    &\le
    |\T|\norm{K}_{L^\infty(\T^2)}\norm{f}_{C^0(\T)}.
    \label{eq:kernel-to-operator-czero}
\end{align}

The approximation class is a geometry-conditioned low-rank DeepONet with a fixed trunk
\begin{equation}\label{eq:low-rank-net}
    \mc N_{r,J}(f,\bm X)(s)
    :=
    \sum_{k=1}^r \tau_k(s)\,\rho_k(f;Z_J(\bm X)),
\end{equation}
where $Z_J(\bm X)$ is built from $J$ boundary nodes, $\tau_k$ are fixed trunk outputs,
and $\rho_k$ are branch outputs conditioned on the geometry. In the exact representation results,
$f\mapsto \rho_k(f;Z_J(\bm X))$ is allowed to be a bounded linear functional for each fixed geometry
code. This is the mathematical idealization of the branch component before finite-node and neural
approximation errors are added.

Finite-rank kernels fit this form. If
\begin{equation}
    R(s,t;\bm X)=\sum_{k=1}^r a_k(s)b_k(t;\bm X)
\end{equation}
then
\begin{equation}
    \mc R_{\bm X}f(s)=\sum_{k=1}^r a_k(s)c_k^{\bm X}(f),
    \qquad
    c_k^{\bm X}(f):=\int_{\T} b_k(t;\bm X)f(t)\,dt.
\end{equation}
If $b_k(\cdot;\bm X)\in L^2(\T)$, then
\begin{equation}
    |c_k^{\bm X}(f)|
    \le
    \norm{b_k(\cdot;\bm X)}_{L^2(\T)}\norm{f}_{L^2(\T)}
\end{equation}
by Cauchy--Schwarz, so the branch map is bounded.

\subsection{Low-Rank Approximation of the Remainder Operator}

The next estimate gives the constructive rank rate used below. Its important architectural feature is
the order of choices: the trunk functions are selected once for the rank budget, while the factors
paired with the input data may depend on the geometry.

\begin{theorem}[Low-rank approximation of the remainder operator]\label{thm:rank-kernel-detail}
Assume Definition~\ref{def:smooth-kernel}. Then for every $\bm X\in\mc A_n$ and every integer $p\ge 2$ there
exists a separated kernel
\begin{equation}
    R_p(s,t;\bm X)=\sum_{k=1}^{r_p} a_k(s)b_k(t;\bm X)
\end{equation}
with rank
\begin{equation}
    r_p\le m(p+1)
\end{equation}
such that
\begin{equation}
    \norm{R(\cdot,\cdot;\bm X)-R_p(\cdot,\cdot;\bm X)}_{L^\infty(\T^2)}
    \le
    C_{\rm ker}p^{-m},
\end{equation}
where $C_{\rm ker}$ depends only on $m$, $|\T|$, and $M_R$, uniformly for $\bm X\in\mc A_n$.
The functions $a_k$ may depend on the rank budget but are chosen independently of $\bm X$.
Consequently, for every $r\ge m$ there exists a separated operator of rank at most $r$,
\begin{equation}
    \mc R_{\bm X,r}f(s):=\sum_{k=1}^r a_k(s)c_k^{\bm X}(f),
    \qquad
    c_k^{\bm X}(f):=\int_\T b_k(t;\bm X)f(t)\,dt,
\end{equation}
such that
\begin{align}
    \norm{\mc R_{\bm X}-\mc R_{\bm X,r}}_{L^2(\T)\to L^\infty(\T)}
    &\le
    C_{{\rm rank},\infty} r^{-m},\label{eq:rank-rate-infty}\\
    \norm{\mc R_{\bm X}-\mc R_{\bm X,r}}_{L^2(\T)\to L^2(\T)}
    &\le
    C_{{\rm rank},2} r^{-m},\label{eq:rank-rate}
\end{align}
uniformly in $\bm X\in\mc A_n$.
\end{theorem}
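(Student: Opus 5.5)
The plan is to separate variables only in $s$, using a fixed linear approximation operator on the first variable. The trunk $\{a_k\}$ will then come from that operator and will not depend on $\bm X$. Concretely, take the Jackson-type (de la Vallée Poussin) trigonometric approximation operator $V_p$ of degree $p$ on $\T$. It is linear, bounded on $C^0(\T)$ uniformly in $p$, and satisfies
\[
\norm{g-V_pg}_{L^\infty(\T)}\le C_m p^{-m}\norm{g^{(m)}}_{L^\infty(\T)}
\quad\text{for } g\in C^m(\T).
\]
We apply $V_p$ in the $s$-variable to $R(\cdot,t;\bm X)$ for each fixed $t$ and define $R_p(\cdot,t;\bm X):=V_p[R(\cdot,t;\bm X)]$. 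Since $V_p g$ lies in the span of $\{1,\cos ks,\sin ks\}_{k\le p}$, there is a fixed real basis $a_1,\dots,a_{2p+1}$ of this space, and
\[
R_p(s,t;\bm X)=\sum_k a_k(s)\,b_k(t;\bm X),
\]
where $b_k(t;\bm X)$ is the coefficient functional of $V_p$ applied to $R(\cdot,t;\bm X)$. This is an integral of $R$ against a fixed bounded function of $s$, so $b_k(\cdot;\bm X)$ is continuous in $t$ and in particular lies in $L^2$.

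To meet the stated count $r_p\le m(p+1)$ instead of $2p+1$, I would simply reindex the degree: use degree $\lfloor (m(p+1)-1)/2\rfloor\ge p/2$ when $m\ge 2$. This only changes the constant in the rate. For $m=1$, I would use a one-sided-in-frequency count or accept a constant-factor change. The precise rank bookkeeping is cosmetic.

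The uniform error bound comes from the $C^m(\T^2)$ bound of Definition~\ref{def:smooth-kernel}. For each $t$, $\norm{\partial_s^m R(\cdot,t;\bm X)}_{L^\infty}\le M_R$, hence
\[
\sup_{s,t}\abs{R-R_p}\le C_m M_R\, p^{-m}=:C_{\rm ker}p^{-m}.
\]
This holds uniformly over $\bm X\in\mc A_n$, since only $M_R$ enters; the Lipschitz assumption \eqref{eq:lipschitz-kernel} is not needed here.

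For the operator statement, given $r\ge m$ I choose the largest $p$ with $m(p+1)\le r$ (or $p\ge 2$ adjusted), so that $p\ge c\,r/m$ and $p^{-m}\le C r^{-m}$. Pad with zero functions if the rank is below $r$. Then set $c_k^{\bm X}(f)=\int b_k f$, so that $\mc R_{\bm X}-\mc R_{\bm X,r}$ is the integral operator with kernel $R-R_p$. Applying \eqref{eq:kernel-to-operator-infty} and \eqref{eq:kernel-to-operator-two} gives \eqref{eq:rank-rate-infty} and \eqref{eq:rank-rate}, with $C_{{\rm rank},\infty}=|\T|^{1/2}C$ and $C_{{\rm rank},2}=|\T|C$.

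The small values $m\le r<2m$, where $p$ could be $0$ or $1$, are covered by the trivial bound $\norm{R}_\infty\le M_R$ with an enlarged constant, using the zero approximant.

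The main obstacle I anticipate is the uniform Jackson estimate with a linear, $\bm X$-independent operator. Best approximation is nonlinear, and its minimizing basis coefficients could depend on $t$ and $\bm X$ in a non-measurable or non-bounded way. This is why I insist on a fixed linear operator such as $V_p$, or alternatively Fejér/Jackson kernel convolution, which gives the $p^{-m}$ rate for $C^m$ periodic functions. With that choice, measurability and continuity of $b_k$ in $t$ and the independence of $a_k$ from $\bm X$ become immediate.
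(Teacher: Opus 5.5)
Your argument is correct, but it takes a different route from the paper.

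\textbf{The paper's route.} The construction is local. It places $p+1$ equispaced nodes $s_j$ and takes the order-$(m-1)$ Taylor polynomial of $s\mapsto R(s,t;\bm X)$ at each node. It then glues these with the periodic hat partition of unity $\chi_j$. So the trunk is $a_{j,\ell}(s)=\chi_j(s)d_j(s)^\ell/\ell!$, and the branch factor is the point evaluation $b_{j,\ell}(t;\bm X)=\partial_s^\ell R(s_j,t;\bm X)$. The rank is exactly $m(p+1)$. The error $\frac{M_R}{m!}\paren{\frac{2\pi}{p+1}}^m$ follows from the integral Taylor remainder and $\sum_j\chi_j=1$ alone, so the proof is self-contained with an explicit constant. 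The rank budget is then handled, as in your last step, by $p=\lfloor r/m\rfloor-1$ for $r\ge 3m$ and the zero operator for $m\le r<3m$.

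\textbf{Your route.} You use a global band-limited trunk (trigonometric polynomials) generated by a fixed near-best linear operator. Your branch factors are Fourier-type averages of $R(\cdot,t;\bm X)$ against fixed weights. You import the rate from Jackson's theorem together with the near-best property of the de la Vallée Poussin operator.

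\textbf{What each buys.} Your route relies on a classical external result and needs some rank reindexing. That reindexing is actually unnecessary for $m\ge 2$, since $2p+1\le m(p+1)$ already. In return it gives several things the paper's construction does not:
\begin{itemize}
\item The trunk is independent of $m$. The same basis therefore gives super-algebraic convergence in the $C^\infty$-boundary setting of Theorem~\ref{thm:pdo_remainder}, and exponential convergence for analytic boundaries. The paper's trunk is tied to a fixed Taylor order.
\item The trunk functions are smooth, in line with the harmonic encoding used in the Helmholtz experiments.
\item The branch factors need no pointwise $s$-derivatives of the kernel.
\end{itemize}

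\textbf{A caution about your fallback.} Fejér means saturate at order $p^{-1}$, and convolution with the classical positive Jackson kernel saturates at order $p^{-2}$. Neither delivers $p^{-m}$ for larger $m$. Keep the de la Vallée Poussin operator, or a Stechkin-type higher-order Jackson operator.
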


\begin{proof}
Fix $p\ge 2$ and set
\begin{equation}
    h:=\frac{2\pi}{p+1},\qquad s_j:=jh,\qquad j=0,\dots,p.
\end{equation}
Let $\{\chi_j\}_{j=0}^p$ be the periodic piecewise-linear hat partition of unity on $\T$ associated
with these nodes: $\chi_j\ge 0$, $\sum_j\chi_j=1$, and $\chi_j(s)$ is supported where the signed
periodic distance $d_j(s)$ from $s_j$ to $s$ satisfies $|d_j(s)|\le h$. Since $p\ge 2$, these supports
lie in coordinate arcs where $d_j$ is a smooth local coordinate.

For fixed $t$ and $\bm X$, Taylor expand the function $s\mapsto R(s,t;\bm X)$ at each node $s_j$ and
glue the local polynomials with the partition of unity:
\begin{equation}
    P_j(s,t;\bm X):=
    \sum_{\ell=0}^{m-1}
    \frac{\partial_s^\ell R(s_j,t;\bm X)}{\ell!}
    d_j(s)^\ell ,
\end{equation}
\begin{equation}
    R_p(s,t;\bm X)
    :=
    \sum_{j=0}^p
    \chi_j(s)P_j(s,t;\bm X).
\end{equation}
The representation is separated in $(s,t)$:
\begin{equation}
    R_p(s,t;\bm X)
    =
    \sum_{j=0}^p\sum_{\ell=0}^{m-1}
    a_{j,\ell}(s)b_{j,\ell}(t;\bm X),
\end{equation}
where
\begin{equation}
    a_{j,\ell}(s):=\chi_j(s)\frac{d_j(s)^\ell}{\ell!},
    \qquad
    b_{j,\ell}(t;\bm X):=\partial_s^\ell R(s_j,t;\bm X).
\end{equation}
There are $m(p+1)$ pairs $(j,\ell)$, so the separated rank is at most $m(p+1)$.

The kernel error follows from Taylor's theorem. If $\chi_j(s)\ne 0$, the integral form of Taylor's
theorem gives
\begin{equation}
    \begin{aligned}
    \abs{R(s,t;\bm X)-P_j(s,t;\bm X)}
    &\le
    \frac{|d_j(s)|^m}{(m-1)!}
    \int_0^1
    (1-\theta)^{m-1}
    \abs{\partial_s^m R(s_j+\theta d_j(s),t;\bm X)}
    \,d\theta\\
    &\le
    \frac{M_R}{m!}h^m .
    \end{aligned}
\end{equation}
Since the $\chi_j$ form a nonnegative partition of unity,
\begin{equation}
    \abs{R(s,t;\bm X)-R_p(s,t;\bm X)}
    \le
    \sum_{j=0}^p \chi_j(s)\frac{M_R}{m!}h^m
    =
    \frac{M_R}{m!}\paren{\frac{2\pi}{p+1}}^m.
\end{equation}
This is the stated kernel bound, after enlarging the constant and using $(p+1)^{-m}\le p^{-m}$.

Let $\mc R_{\bm X,p}$ denote the operator with kernel $R_p$. By
\eqref{eq:kernel-to-operator-infty}--\eqref{eq:kernel-to-operator-two},
\begin{align}
    \norm{\mc R_{\bm X}-\mc R_{\bm X,p}}_{L^2(\T)\to L^\infty(\T)}
    &\le
    |\T|^{1/2}C_{\rm ker}p^{-m},\\
    \norm{\mc R_{\bm X}-\mc R_{\bm X,p}}_{L^2(\T)\to L^2(\T)}
    &\le
    |\T|C_{\rm ker}p^{-m}.
\end{align}

It remains to express the estimate in terms of an arbitrary rank budget $r$. First suppose
$r\ge 3m$ and choose
\begin{equation}
    p:=\left\lfloor \frac{r}{m}\right\rfloor-1.
\end{equation}
Then $p\ge 2$ and $m(p+1)\le r$, while $p\ge r/(3m)$. Hence $p^{-m}\le (3m)^m r^{-m}$, giving
\eqref{eq:rank-rate-infty} and \eqref{eq:rank-rate}. For the finitely many ranks $m\le r<3m$, take
the zero operator and enlarge the constants using
\begin{equation}
    \norm{\mc R_{\bm X}}_{L^2\to L^\infty}
    \le
    |\T|^{1/2}M_R,
    \qquad
    \norm{\mc R_{\bm X}}_{L^2\to L^2}
    \le
    |\T|M_R.
\end{equation}
The constants remain independent of $\bm X$.
\end{proof}

\subsection{Exact-Geometry Neural Approximation}

\begin{theorem}[Exact-geometry approximation error]\label{thm:ideal-exact-geom}
Assume Definition~\ref{def:smooth-kernel}. Fix $\bm X\in \mc A_n$ and $r\ge m$. Let
\begin{equation}
    \mc R_{\bm X,r}f(s)=\sum_{k=1}^r a_k(s)c_k^{\bm X}(f)
\end{equation}
be a rank-$r$ approximation satisfying \eqref{eq:rank-rate-infty}. Assume a rank-$r$ DeepONet
\begin{equation}
    \mc N_r(f,\bm X)(s)=\sum_{k=1}^r \tau_k(s)\rho_k(f;\bm X)
\end{equation}
satisfies
\begin{equation}
    \sup_{s\in\T}\abs{a_k(s)-\tau_k(s)}\le \varepsilon_k^{\rm tr},
    \qquad
    \sup_{\norm{f}_{\mc U}\le 1}\abs{c_k^{\bm X}(f)-\rho_k(f;\bm X)}\le \varepsilon_k^{\rm br},
\end{equation}
for some Banach space $\mc U\hookrightarrow L^2(\T)$. Set
\begin{equation}
    C_{\mc U}:=\sup_{\norm{f}_{\mc U}\le 1}\norm{f}_{L^2(\T)},
    \qquad
    A_k:=\norm{a_k}_{L^\infty(\T)},
    \qquad
    B_k^{\bm X}:=\norm{c_k^{\bm X}}_{\mc U^\ast}.
\end{equation}
Then
\begin{equation}\label{eq:exact-geom-bound}
    \norm{\mc R_{\bm X}-\mc N_r(\cdot,\bm X)}_{\mc U\to L^\infty(\T)}
    \le
    E_{\rm rank}(r)+E_{\rm net}^{\rm exact}(r,\bm X),
\end{equation}
where
\begin{align}
    E_{\rm rank}(r)&:=C_{\mc U}C_{{\rm rank},\infty}r^{-m},\\
    E_{\rm net}^{\rm exact}(r,\bm X)
    &:=
    \sum_{k=1}^r
    \paren{
    B_k^{\bm X}\varepsilon_k^{\rm tr}
    +
    A_k\varepsilon_k^{\rm br}
    +
    \varepsilon_k^{\rm tr}\varepsilon_k^{\rm br}
    }.
\end{align}
The same algebraic rate holds for the operator norm $\mc U\to L^2(\T)$, after changing the constant.
\end{theorem}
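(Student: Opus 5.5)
We split the error with the triangle inequality through the intermediate separated operator $\mc R_{\bm X,r}$ supplied by Theorem~\ref{thm:rank-kernel-detail}:
\[
\mc R_{\bm X}-\mc N_r(\cdot,\bm X)=\paren{\mc R_{\bm X}-\mc R_{\bm X,r}}+\paren{\mc R_{\bm X,r}-\mc N_r(\cdot,\bm X)} .
\]
The two pieces are bounded separately in the $\mc U\to L^\infty(\T)$ operator norm.

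For the first term, we factor through the embedding $\mc U\hookrightarrow L^2(\T)$. For $\norm{f}_{\mc U}\le 1$ we have $\norm{f}_{L^2}\le C_{\mc U}$. Then \eqref{eq:rank-rate-infty} gives
\[
\norm{(\mc R_{\bm X}-\mc R_{\bm X,r})f}_{L^\infty}\le C_{{\rm rank},\infty}r^{-m}C_{\mc U}=E_{\rm rank}(r).
\]
Since the constant in \eqref{eq:rank-rate-infty} is uniform over $\mc A_n$, this term does not depend on $\bm X$.

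For the second term, we use the standard product-perturbation identity termwise. Writing $\tau_k=a_k-\delta_k$ and $\rho_k=c_k^{\bm X}-e_k$, we get
\[
a_k(s)c_k^{\bm X}(f)-\tau_k(s)\rho_k(f;\bm X)=\delta_k(s)c_k^{\bm X}(f)+a_k(s)e_k(f)-\delta_k(s)e_k(f).
\]
We take the supremum over $s\in\T$ and over $\norm{f}_{\mc U}\le 1$:
\begin{itemize}
\item the first summand is at most $\varepsilon_k^{\rm tr}B_k^{\bm X}$, since $|c_k^{\bm X}(f)|\le\norm{c_k^{\bm X}}_{\mc U^\ast}\norm{f}_{\mc U}$;
\item the second is at most $A_k\varepsilon_k^{\rm br}$;
\item the third is at most $\varepsilon_k^{\rm tr}\varepsilon_k^{\rm br}$.
\end{itemize}
Summing over $k=1,\dots,r$ gives $E^{\rm exact}_{\rm net}(r,\bm X)$. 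Note that $\rho_k$ need not be linear in $f$. The hypothesis $\sup_{\norm f_{\mc U}\le1}|c_k^{\bm X}(f)-\rho_k(f;\bm X)|\le\varepsilon_k^{\rm br}$ is used directly as a uniform bound on the unit ball. The operator "norm" of $\mc R_{\bm X}-\mc N_r$ is therefore interpreted as the supremum over the unit ball, which is the natural reading when $\mc N_r$ is nonlinear.

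For the $\mc U\to L^2(\T)$ statement, the same argument goes through with $\norm{\cdot}_{L^2}\le|\T|^{1/2}\norm{\cdot}_{L^\infty}$. Alternatively, one can use \eqref{eq:rank-rate} for the rank term. Either way the algebraic rate $r^{-m}$ is unchanged and only the constant changes.

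The only delicate point is finiteness of $B_k^{\bm X}$. This needs $c_k^{\bm X}$ to be bounded on $\mc U$, which follows from Cauchy--Schwarz since $b_k(\cdot;\bm X)\in L^2$ for the kernels constructed in Theorem~\ref{thm:rank-kernel-detail}, together with $\mc U\hookrightarrow L^2$. Beyond that, the proof is bookkeeping.
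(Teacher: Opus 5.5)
Your proposal is correct and follows the paper's proof: the same split through $\mc R_{\bm X,r}$, the embedding constant $C_{\mc U}$ for the rank term, and a termwise product-perturbation bound for the network term. The paper writes that term as $(a_k-\tau_k)c_k^{\bm X}(f)+\tau_k\paren{c_k^{\bm X}(f)-\rho_k}$ and uses $|\tau_k|\le A_k+\varepsilon_k^{\rm tr}$, which gives the same bound as your three-term expansion, and it obtains the $\mc U\to L^2(\T)$ statement from $\norm{g}_{L^2(\T)}\le|\T|^{1/2}\norm{g}_{L^\infty(\T)}$, exactly as you do.
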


\begin{proof}
Split the deterministic error as
\begin{equation}
    \mc R_{\bm X}-\mc N_r
    =
    (\mc R_{\bm X}-\mc R_{\bm X,r})
    +
    (\mc R_{\bm X,r}-\mc N_r).
\end{equation}
The first term is bounded by Theorem~\ref{thm:rank-kernel}:
\begin{equation}
    \norm{\mc R_{\bm X}-\mc R_{\bm X,r}}_{\mc U\to L^\infty(\T)}
    \le
    C_{\mc U}\norm{\mc R_{\bm X}-\mc R_{\bm X,r}}_{L^2(\T)\to L^\infty(\T)}
    \le
    C_{\mc U}C_{{\rm rank},\infty}r^{-m}.
\end{equation}

For the second term, take $\norm{f}_{\mc U}\le 1$ and expand
\begin{equation}
    a_k c_k^{\bm X}(f)-\tau_k\rho_k
    =
    (a_k-\tau_k)c_k^{\bm X}(f)+\tau_k(c_k^{\bm X}(f)-\rho_k),
\end{equation}
where $\rho_k=\rho_k(f;\bm X)$. Using $\abs{c_k^{\bm X}(f)}\le B_k^{\bm X}$ and
\begin{equation}
    \abs{\tau_k(s)}
    \le
    \abs{a_k(s)}+\abs{a_k(s)-\tau_k(s)}
    \le
    A_k+\varepsilon_k^{\rm tr},
\end{equation}
we obtain
\begin{equation}
    \abs{a_k c_k^{\bm X}(f)-\tau_k\rho_k}
    \le
    B_k^{\bm X}\varepsilon_k^{\rm tr}
    +
    (A_k+\varepsilon_k^{\rm tr})\varepsilon_k^{\rm br}.
\end{equation}

Summing over $k$ and taking the supremum in $s$ proves \eqref{eq:exact-geom-bound}. The corresponding
$\mc U\to L^2(\T)$ estimate follows from $\norm{g}_{L^2(\T)}\le |\T|^{1/2}\norm{g}_{L^\infty(\T)}$.
\end{proof}

\subsection{Finite-Node Deterministic Error}

Let
$s_j=\frac{2\pi j}{J}$, $j=0,\dots,J-1$, and define
\begin{equation}
    Z_J(\bm X):=\bigl(\bm X(s_0),\dots,\bm X(s_{J-1})\bigr)\in (\R^2)^J.
\end{equation}
Let $\Pi_J$ be a reconstruction operator, for example trigonometric interpolation, and set
\begin{equation}
    \bm X_J:=\Pi_J Z_J(\bm X).
\end{equation}

\begin{assumption}[Geometry reconstruction accuracy]\label{ass:geom}
There exist constants $C_{\Pi}$ and $J_0$ such that for every $\bm X\in\mc A_n$ and every $J\ge J_0$,
the reconstruction $\bm X_J$ remains admissible and satisfies
\begin{equation}
    \bm X_J\in \mc A_n(2M,c_0/2,c_1/2),
\end{equation}
and
\begin{equation}\label{eq:geom-interp}
    \norm{\bm X-\bm X_J}_{C^\nu(\T)}
    \le
    C_{\Pi} J^{-(n-\nu)} \norm{\bm X}_{C^n(\T)}
\end{equation}
\end{assumption}

In the discrete estimates below, Definition~\ref{def:smooth-kernel} is used on a class containing both
$\bm X$ and its reconstruction $\bm X_J$, for example $\mc A_n(2M,c_0/2,c_1/2)$. The constants $M_R$
and $L_R$ may change on this enlarged class; only their uniform finiteness is used.

\begin{assumption}[Input quadrature accuracy]\label{ass:quad}
The same integer $J$ is used for boundary nodes and input quadrature nodes. For some integer $q\ge 0$ and
all $g\in C^q(\T)$,
\begin{equation}
    \abs{\int_{\T} g(t)\,dt - \sum_{j=0}^{J-1}\omega_j g(s_j)}
    \le
    C_Q J^{-q}\norm{g}_{C^q(\T)}.
\end{equation}
\end{assumption}

\begin{theorem}[Discretized residual approximation error]\label{thm:discrete}
Assume Definition~\ref{def:smooth-kernel}, Assumption~\ref{ass:geom}, and
Assumption~\ref{ass:quad}, and let $r\ge m$, $J\ge J_0$. Let the rank-$r$ kernel approximation of
$\mc R_{\bm X_J}$ be
\begin{equation}
    R_r(s,t;\bm X_J)=\sum_{k=1}^r a_k(s)b_k(t;\bm X_J),
\end{equation}
and assume
\begin{equation}\label{eq:discrete-rank-kernel-error}
    \norm{R(\cdot,\cdot;\bm X_J)-R_r(\cdot,\cdot;\bm X_J)}_{L^\infty(\T^2)}
    \le
    C_{{\rm ker},C^0}r^{-m}.
\end{equation}
Also assume $b_k(\cdot;\bm X_J)\in C^q(\T)$ for $k=1,\dots,r$. Define the point-value DeepONet
\begin{equation}
    \widehat{\mc N}_{r,J}(f,\bm X)(s)
    :=
    \sum_{k=1}^r \widehat a_k(s)
    \sum_{j=0}^{J-1}\omega_j \widehat b_k(s_j;Z_J(\bm X))f(s_j).
\end{equation}
Assume
\begin{equation}
    \sup_s \abs{a_k(s)-\widehat a_k(s)}\le \delta_k^a,
    \qquad
    \sup_t \abs{b_k(t;\bm X_J)-\widehat b_k(t;Z_J(\bm X))}\le \delta_k^b.
\end{equation}
Then for every $f\in C^q(\T)$,
\begin{equation}\label{eq:full-discrete-bound}
    \norm{\mc R_{\bm X}f-\widehat{\mc N}_{r,J}(f,\bm X)}_{L^\infty(\T)}
    \le
    \paren{
    E_{\rm geom}(J)
    +
    E_{\rm rank}(r)
    +
    E_{\rm quad}(r,J)
    +
    E_{\rm net}^{\rm disc}(r,J)
    }
    \norm{f}_{C^q(\T)},
\end{equation}
where
\begin{align}
    E_{\rm geom}(J)&:=|\T|L_R C_{\Pi} M J^{-(n-\nu)},\\
    E_{\rm rank}(r)&:=C_{{\rm rank},C^0}r^{-m},\\
    C_{{\rm rank},C^0}&:=|\T|C_{{\rm ker},C^0},\\
    E_{\rm quad}(r,J)
    &:=
    C_Q C_{\rm prod}(q) J^{-q}
    \sum_{k=1}^r
    \norm{a_k}_{L^\infty(\T)}
    \norm{b_k(\cdot;\bm X_J)}_{C^q(\T)},\\
    E_{\rm net}^{\rm disc}(r,J)
    &:=
    \sum_{k=1}^r
    \paren{
    C_\omega \norm{b_k(\cdot;\bm X_J)}_{L^\infty}\delta_k^a
    +
    C_\omega \norm{a_k}_{L^\infty}\delta_k^b
    +
    C_\omega \delta_k^a\delta_k^b
    },
\end{align}
where $C_\omega:=\sum_{j=0}^{J-1}\abs{\omega_j}$ and $C_{\rm prod}(q)$ is any constant such that
\begin{equation}
    \norm{uv}_{C^q(\T)}
    \le
    C_{\rm prod}(q)\norm{u}_{C^q(\T)}\norm{v}_{C^q(\T)}.
\end{equation}
\end{theorem}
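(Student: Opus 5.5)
We split the error by a telescoping decomposition that passes through the reconstructed geometry $\bm X_J$, then through its rank-$r$ kernel, then through the quadrature applied to that kernel, and finally through the network factors. Concretely, for fixed $s$ we write
\begin{equation*}
\mc R_{\bm X}f-\widehat{\mc N}_{r,J}(f,\bm X)
=(\mc R_{\bm X}-\mc R_{\bm X_J})f
+(\mc R_{\bm X_J}-\mc R_{\bm X_J,r})f
+(\mc R_{\bm X_J,r}f-Q_Jf)
+(Q_Jf-\widehat{\mc N}_{r,J}(f,\bm X)),
\end{equation*}
where $\mc R_{\bm X_J,r}$ has kernel $R_r(\cdot,\cdot;\bm X_J)$. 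The term $Q_Jf(s):=\sum_k a_k(s)\sum_j\omega_j b_k(s_j;\bm X_J)f(s_j)$ is the exactly factored kernel with the integral replaced by the nodal rule. We bound each piece in $L^\infty$ by a constant times $\norm{f}_{C^q}$, using throughout $\norm{f}_{L^\infty}\le\norm{f}_{C^q}$ and $|\T|\,\norm{f}_{L^\infty}$ for integrals of $f$.

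\textbf{Geometry term.} Apply \eqref{eq:kernel-to-operator-czero} to the kernel difference. Since $\bm X$ and $\bm X_J$ both lie in the enlarged admissible class by Assumption~\ref{ass:geom}, the Lipschitz bound \eqref{eq:lipschitz-kernel} with $\mu\ge0$ gives $\norm{R(\cdot,\cdot;\bm X)-R(\cdot,\cdot;\bm X_J)}_{L^\infty}\le L_R\norm{\bm X-\bm X_J}_{C^\nu}$. Then \eqref{eq:geom-interp} with $\norm{\bm X}_{C^n}\le M$ yields exactly $E_{\rm geom}(J)$.

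\textbf{Rank term.} This is again \eqref{eq:kernel-to-operator-czero}, now combined with the assumed bound \eqref{eq:discrete-rank-kernel-error}, and it gives $|\T|C_{{\rm ker},C^0}r^{-m}=E_{\rm rank}(r)$.

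\textbf{Quadrature term.} Fix $s$. The difference equals $\sum_k a_k(s)\bigl(\int_\T b_k(t;\bm X_J)f(t)\,dt-\sum_j\omega_j b_k(s_j;\bm X_J)f(s_j)\bigr)$. Apply Assumption~\ref{ass:quad} to $g=b_k f\in C^q$, then the product inequality $\norm{b_kf}_{C^q}\le C_{\rm prod}(q)\norm{b_k}_{C^q}\norm{f}_{C^q}$. Taking $|a_k(s)|\le\norm{a_k}_{L^\infty}$ and summing over $k$ gives $E_{\rm quad}$.

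\textbf{Network term.} Follow the algebra of Theorem~\ref{thm:ideal-exact-geom}. Write $\beta_k:=\sum_j\omega_j b_k(s_j;\bm X_J)f(s_j)$ and $\widehat\beta_k$ for its network analogue, and use
\begin{equation*}
a_k\beta_k-\widehat a_k\widehat\beta_k=(a_k-\widehat a_k)\beta_k+a_k(\beta_k-\widehat\beta_k)-(a_k-\widehat a_k)(\beta_k-\widehat\beta_k).
\end{equation*}
The nodal sums satisfy $|\beta_k|\le C_\omega\norm{b_k}_{L^\infty}\norm{f}_{L^\infty}$ and $|\beta_k-\widehat\beta_k|\le C_\omega\delta_k^b\norm{f}_{L^\infty}$, where the latter uses the sup bound on $b_k-\widehat b_k$ at the nodes. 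Together with $|a_k-\widehat a_k|\le\delta_k^a$, this gives the three summands of $E_{\rm net}^{\rm disc}$. The triangle inequality then assembles \eqref{eq:full-discrete-bound}.

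The main obstacle is the geometry step. Definition~\ref{def:smooth-kernel} must be applicable to $\bm X_J$ with the same constants, which is exactly what Assumption~\ref{ass:geom}'s admissibility clause and the remark on the enlarged class are there to provide. We will also check that the $C^\mu$ norm dominates the sup norm. Beyond this point the argument is careful bookkeeping.
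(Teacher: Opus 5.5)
Your proposal is correct and matches the paper's proof essentially step for step: the same intermediate nodal operator $\mc Q_{r,J}^{\bm X}$, the same four-term telescoping through $\bm X_J$, and the same estimates for the geometry, rank, quadrature, and network pieces. Splitting the network term at the level of the nodal sums rather than pointwise in $a_kb_k-\widehat a_k\widehat b_k$ is only a cosmetic difference. One small correction: Definition~\ref{def:smooth-kernel} need not hold on the enlarged class with the \emph{same} constants. The paper only uses uniform finiteness there, so $L_R$ in $E_{\rm geom}(J)$ should be read as the Lipschitz constant on the enlarged class.
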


\begin{proof}
Introduce the intermediate discrete separated operator
\begin{equation}
    \mc Q_{r,J}^{\bm X}f(s)
    :=
    \sum_{k=1}^r a_k(s)\sum_{j=0}^{J-1}\omega_j b_k(s_j;\bm X_J)f(s_j),
\end{equation}
and decompose
\begin{equation}
    \mc R_{\bm X}-\widehat{\mc N}_{r,J}
    =
    (\mc R_{\bm X}-\mc R_{\bm X_J})
    +
    (\mc R_{\bm X_J}-\mc R_{\bm X_J,r})
    +
    (\mc R_{\bm X_J,r}-\mc Q_{r,J}^{\bm X})
    +
    (\mc Q_{r,J}^{\bm X}-\widehat{\mc N}_{r,J}).
\end{equation}

The geometry term is controlled by \eqref{eq:lipschitz-kernel} and \eqref{eq:geom-interp}:
\begin{equation}
    \norm{R(\cdot,\cdot;\bm X)-R(\cdot,\cdot;\bm X_J)}_{L^\infty(\T^2)}
    \le
    L_R C_{\Pi} M J^{-(n-\nu)}.
\end{equation}
Hence, by \eqref{eq:kernel-to-operator-czero},
\begin{equation}
    \norm{(\mc R_{\bm X}-\mc R_{\bm X_J})f}_{L^\infty(\T)}
    \le
    |\T|L_R C_{\Pi} M J^{-(n-\nu)}\norm{f}_{C^0(\T)}
    \le
    E_{\rm geom}(J)\norm{f}_{C^q(\T)}.
\end{equation}

For the rank-truncation term, the assumed kernel approximation
\eqref{eq:discrete-rank-kernel-error} and \eqref{eq:kernel-to-operator-czero} give
\begin{equation}
    \norm{(\mc R_{\bm X_J}-\mc R_{\bm X_J,r})f}_{L^\infty(\T)}
    \le
    |\T| C_{{\rm ker},C^0} r^{-m}\norm{f}_{C^0(\T)}
    \le
    E_{\rm rank}(r)\norm{f}_{C^q(\T)}.
\end{equation}

For the input quadrature term,
\begin{equation}
    \begin{aligned}
        \abs{\paren{\mc R_{\bm X_J,r}f-\mc Q_{r,J}^{\bm X}f}(s)}
        \le
        \sum_{k=1}^r \norm{a_k}_{L^\infty}
        \Biggl|
        \int_\T b_k(t;\bm X_J)f(t)\,dt\\
        -
        \sum_{j=0}^{J-1}\omega_j b_k(s_j;\bm X_J)f(s_j)
        \Biggr|.
    \end{aligned}
\end{equation}
Apply the quadrature assumption to
\begin{equation}
    g_k(t):=b_k(t;\bm X_J)f(t).
\end{equation}
Since $\norm{g_k}_{C^q(\T)}\le \norm{b_k(\cdot;\bm X_J)}_{C^q(\T)}\norm{f}_{C^q(\T)}$ up to a constant
given by the Banach-algebra estimate for $C^q(\T)$, this yields the bound
$E_{\rm quad}(r,J)\norm{f}_{C^q(\T)}$.

For the neural realization term,
\begin{equation}
    \abs{\paren{\mc Q_{r,J}^{\bm X}f-\widehat{\mc N}_{r,J}(f,\bm X)}(s)}
    \le
    \sum_{k=1}^r \sum_{j=0}^{J-1} |\omega_j|\,|f(s_j)|\,|a_k b_k-\widehat a_k\widehat b_k|,
\end{equation}
where
\begin{equation}
    |a_k b_k-\widehat a_k\widehat b_k|
    \le
    |a_k|\delta_k^b+|b_k|\delta_k^a+\delta_k^a\delta_k^b.
\end{equation}
Using $\max_j |f(s_j)|\le \norm{f}_{C^q(\T)}$ gives the bound
\begin{equation}
    \norm{\mc Q_{r,J}^{\bm X}f-\widehat{\mc N}_{r,J}(f,\bm X)}_{L^\infty(\T)}
    \le
    E_{\rm net}^{\rm disc}(r,J)\norm{f}_{C^q(\T)}.
\end{equation}

Combining the four estimates proves \eqref{eq:full-discrete-bound}.
\end{proof}

\subsection{Training Residual and Full-Map Error}

Let $\Theta_{r,J}$ be a class of low-rank DeepONets, and let $\mu$ describe the ensemble of boundary
data and geometries used to measure error. Define the mean residual loss
\begin{equation}
    \mc L_{\rm pop}(\theta)
    :=
    \mathbb E_{(\bm X,f)\sim\mu}
    \norm{\mc R_{\bm X}f-\mc N_\theta(f,\bm X)}_{L^2(\T)}^2,
\end{equation}
and its finite-sample counterpart
\begin{equation}
    \mc L_{\rm emp}(\theta)
    :=
    \frac1N\sum_{\ell=1}^N
    \norm{\mc R_{\bm X^\ell}f^\ell-\mc N_\theta(f^\ell,\bm X^\ell)}_{L^2(\T)}^2.
\end{equation}

\begin{theorem}[Discretized residual learning and full-map error]\label{thm:trained-total-detail}
Assume $\widehat\theta\in\Theta_{r,J}$ satisfies
\begin{equation}
    \mc L_{\rm emp}(\widehat\theta)
    \le
    \inf_{\theta\in\Theta_{r,J}}\mc L_{\rm emp}(\theta)+\eta_{\rm opt},
\end{equation}
and set
\begin{equation}
    \Delta_N:=
    \sup_{\theta\in\Theta_{r,J}}
    \abs{\mc L_{\rm pop}(\theta)-\mc L_{\rm emp}(\theta)}.
\end{equation}
Suppose there exists $\theta^\star\in\Theta_{r,J}$ such that
\begin{equation}\label{eq:det-witness}
    \sup_{(\bm X,f)\in\operatorname{supp}\mu}
    \norm{\mc R_{\bm X}f-\mc N_{\theta^\star}(f,\bm X)}_{L^2(\T)}
    \le
    E_{\rm det}(r,J).
\end{equation}
Then
\begin{equation}\label{eq:trained-total}
    \mc L_{\rm pop}(\widehat\theta)^{1/2}
    \le
    E_{\rm det}(r,J)+\sqrt{2\Delta_N}+\sqrt{\eta_{\rm opt}}.
\end{equation}
In the exact-geometry regime, $E_{\rm det}$ may be taken from
Theorem~\ref{thm:ideal-exact-geom}. In the fully discrete regime, it may be taken from
Theorem~\ref{thm:discrete}, after converting the $L^\infty(\T)$ bound to $L^2(\T)$ and imposing the
corresponding bound on the input norm over $\operatorname{supp}\mu$.

If the final approximation uses the same exact evaluation of $2\mc W_0$ as the residual target, then
\begin{equation}\label{eq:full-map-pop}
    \left(
    \mathbb E_{(\bm X,f)\sim\mu}
    \norm{\breve{\Lambda}_{\bm X}f-\widehat{\Lambda}_{\bm X}f}_{L^2(\T)}^2
    \right)^{1/2}
    \le
    E_{\rm det}(r,J)+\sqrt{2\Delta_N}+\sqrt{\eta_{\rm opt}}.
\end{equation}
If $2\mc W_0$ is truncated to $2\mc W_0^{(K)}$, add the root-mean-square Fourier term
\begin{equation}
    \left(
    \mathbb E_{(\bm X,f)\sim\mu}
    \norm{2\mc W_0f-2\mc W_0^{(K)}f}_{L^2(\T)}^2
    \right)^{1/2}.
\end{equation}
\end{theorem}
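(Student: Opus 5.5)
The bound is the standard excess-risk decomposition for an approximate empirical risk minimizer, written in root form so that the deterministic witness enters linearly. Abbreviate $\mc L_{\rm pop}$ and $\mc L_{\rm emp}$ as in the statement, and let $\theta^\star$ be the witness from \eqref{eq:det-witness}. I would chain three comparisons:
\begin{equation*}
    \mc L_{\rm pop}(\widehat\theta)
    \le
    \mc L_{\rm emp}(\widehat\theta)+\Delta_N
    \le
    \mc L_{\rm emp}(\theta^\star)+\eta_{\rm opt}+\Delta_N
    \le
    \mc L_{\rm pop}(\theta^\star)+2\Delta_N+\eta_{\rm opt}.
\end{equation*}
The first and third inequalities use the definition of $\Delta_N$ as a uniform deviation over $\Theta_{r,J}$, applied at $\widehat\theta$ and at $\theta^\star$. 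The middle inequality uses near-optimality of $\widehat\theta$ together with $\inf_\theta \mc L_{\rm emp}(\theta)\le \mc L_{\rm emp}(\theta^\star)$.

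Next, the witness bound controls the population loss of $\theta^\star$. Since the integrand is bounded pointwise on $\operatorname{supp}\mu$ by $E_{\rm det}(r,J)^2$, we get $\mc L_{\rm pop}(\theta^\star)\le E_{\rm det}(r,J)^2$. Taking square roots and using subadditivity, $\sqrt{a+b+c}\le\sqrt a+\sqrt b+\sqrt c$ for nonnegative terms, gives \eqref{eq:trained-total}. The constant $2$ inside $\sqrt{2\Delta_N}$ comes exactly from using the uniform deviation twice.

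For the full-map statement \eqref{eq:full-map-pop}, I would use the decomposition \eqref{eq21} and the reconstruction \eqref{eq:DtNSplitting}. When $2\mc W_0$ is evaluated exactly in both the residual target and the predictor, it cancels:
\begin{equation*}
    \breve\Lambda_{\bm X}f-\widehat\Lambda_{\bm X}f=\mc R_{\bm X}f-\mc N_{\widehat\theta}(f,\bm X).
\end{equation*}
Hence the full-map mean-square error coincides with $\mc L_{\rm pop}(\widehat\theta)$. If the predictor instead uses $2\mc W_0^{(K)}$, the error gains the additive term $2\mc W_0f-2\mc W_0^{(K)}f$. Minkowski's inequality in $L^2(\mu;L^2(\T))$ then adds exactly the stated root-mean-square Fourier truncation term.

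The remaining remarks identify $E_{\rm det}$. In the exact-geometry regime it comes from Theorem~\ref{thm:ideal-exact-geom}, evaluated at $f$ with $\norm{f}_{\mc U}$ bounded on the support. In the fully discrete regime it comes from Theorem~\ref{thm:discrete}, using $\norm{g}_{L^2(\T)}\le|\T|^{1/2}\norm{g}_{L^\infty(\T)}$ and a uniform bound on $\norm{f}_{C^q}$ over $\operatorname{supp}\mu$.

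I expect no real obstacle; the one point needing care is measurability and integrability. Specifically, $\mc L_{\rm pop}$ must be finite for every $\theta\in\Theta_{r,J}$ so that $\Delta_N$ is well defined, and this is what the paper implicitly assumes through boundedness on the support of $\mu$.
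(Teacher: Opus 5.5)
Your proposal is correct and follows the paper's proof essentially step for step. You use the same three-link chain through $\mathcal{L}_{\rm emp}$ with the uniform deviation applied twice, the witness bound, and subadditivity of the square root. For the full map you use the same exact cancellation of $2\mathcal{W}_0$, with Minkowski's inequality supplying the truncation term.
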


\begin{proof}
Compare the finite-sample and mean residual losses. For every $\theta\in\Theta_{r,J}$,
\begin{equation}
    \mc L_{\rm pop}(\widehat\theta)
    \le
    \mc L_{\rm emp}(\widehat\theta)+\Delta_N
    \le
    \mc L_{\rm emp}(\theta)+\eta_{\rm opt}+\Delta_N
    \le
    \mc L_{\rm pop}(\theta)+\eta_{\rm opt}+2\Delta_N.
\end{equation}
Taking $\theta=\theta^\star$ and using \eqref{eq:det-witness} gives
\begin{equation}
    \mc L_{\rm pop}(\widehat\theta)
    \le
    E_{\rm det}(r,J)^2+\eta_{\rm opt}+2\Delta_N.
\end{equation}
The square-root bound follows from
$\sqrt{a+b+c}\le\sqrt a+\sqrt b+\sqrt c$ for nonnegative $a,b,c$.

For the full-map estimate, use the exact splitting
\begin{equation}
    \breve{\Lambda}_{\bm X}f-\widehat{\Lambda}_{\bm X}f
    =
    (2\mc W_0f+\mc R_{\bm X}f)
    -
    (2\mc W_0f+\mc N_{\widehat\theta}(f,\bm X))
    =
    \mc R_{\bm X}f-\mc N_{\widehat\theta}(f,\bm X).
\end{equation}
Squaring, integrating over $\mu$, and applying the residual bound proves \eqref{eq:full-map-pop}. If
$2\mc W_0$ is replaced by $2\mc W_0^{(K)}$, the same identity gains the additive term
$2\mc W_0f-2\mc W_0^{(K)}f$, and Minkowski's inequality gives the stated Fourier contribution.
\end{proof}

\section{Extension to Other PDEs} \label{Other PDEs}
The proposed decomposition \eqref{eq21} is not restricted to the interior Laplace equation. It extends to other elliptic PDEs and exterior problems.

\paragraph{Helmholtz Equation.}
For the interior Helmholtz equation $(\Delta+k^2)u=0$, the free-space Green's function is given in Appendix~\ref{green's func2}. Near the origin, its leading singularity coincides with that of the two-dimensional Laplace Green's function:
\begin{equation} \label{eq1225_222}
G_k(\boldsymbol{x}) \sim
\frac{1}{2\pi}\log r
+ \frac{1}{2\pi}\log\left(\frac{k}{2}\right)
+ \frac{\gamma}{2\pi}
-\frac{i}{4},
\qquad r=|\boldsymbol{x}|\to 0^+ .
\end{equation}
Therefore, the same principal-part decomposition applies:
\begin{equation} \label{eq0605_22222}
\breve\Lambda_{\boldsymbol{X}, k} f
=
2\mathcal{W}_0 f
+
\mathcal{R}_{\boldsymbol{X}, k} f,
\end{equation}
where $2\mathcal{W}_0$ is the universal singular operator in \eqref{eq21}, while $\mathcal{R}_{\boldsymbol{X}, k}$ is a smoother remainder.

\paragraph{Exterior Problems.}
The same argument also applies to exterior formulations. For the exterior Helmholtz problem, the DtN map admits
\begin{equation} \label{eq:0605_helmholtz_exterior}
\breve\Lambda^{\mathrm{ext}}_{\boldsymbol{X}, k} f
=
-2\mathcal{W}_0 f
+
\mathcal{R}^{\mathrm{ext}}_{\boldsymbol{X}, k} f .
\end{equation}
A detailed derivation is provided following.

Let $\Omega \subset \mathbb{R}^2$ be a bounded simply connected domain with smooth
boundary $\Gamma = \partial \Omega$, and denote its exterior by
\[
\Omega^c := \mathbb{R}^2 \setminus \overline{\Omega}.
\]
We consider the exterior Helmholtz problem
\begin{equation}
(\Delta + k^2) u = 0 \quad \text{in } \Omega^c,
\label{eq:helmholtz_ex}
\end{equation}
subject to the Dirichlet boundary condition
\begin{equation}
u = g \quad \text{on } \Gamma,
\label{eq:dirichlet_exterior}
\end{equation}
together with the Sommerfeld radiation condition
\begin{equation}
\lim_{r \to \infty} r^{1/2}
\left(
\frac{\partial u}{\partial r} - i k u
\right) = 0,
\quad r = |x|.
\label{eq:sommerfeld}
\end{equation}
The radiation condition ensures the uniqueness of the outgoing solution.

For the exterior problem, the Dirichlet-to-Neumann (DtN) mapping is defined by
\begin{equation}
\Lambda_k^{\mathrm{ext}}[g]
:=
\partial_n u \big|_{\Gamma},
\end{equation}
where $\partial_n$ denotes the \emph{outward normal derivative with respect to
$\Omega$} (equivalently, the inward normal derivative with respect to $\Omega^c$).
As in the interior case, the operator
\[
\Lambda_k^{\mathrm{ext}} :
H^{1/2}(\Gamma;\mathbb{C}) \to H^{-1/2}(\Gamma;\mathbb{C})
\]
is linear and complex-valued.

We represent the exterior solution using a double-layer potential
\begin{equation}
u(x)
=
\mathcal{K}_k (\varphi) (x)
=
\int_{\Gamma}
\frac{\partial G_k(x-y)}{\partial n_y}
\,\varphi(y)\,ds_y,
\quad x \in \Omega^c.
\label{eq:doublelayer_exterior}
\end{equation}
This representation automatically satisfies
\eqref{eq:0605_helmholtz_exterior} and the Sommerfeld radiation condition
\eqref{eq:sommerfeld}.

The double-layer potential admits the following jump relation across $\Gamma$:
\begin{equation}
\lim_{x \to \Gamma,\, x \in \Omega^c} u
=
\left(
-\frac12 \mathcal{I} + \mathcal{K}_k
\right) \varphi.
\end{equation}
Imposing the Dirichlet boundary condition \eqref{eq:dirichlet_exterior}, we obtain
the second-kind boundary integral equation
\begin{equation}
\left(
-\frac12 \mathcal{I} + \mathcal{K}_k
\right) \varphi
=
g
\quad \text{on } \Gamma.
\label{eq:secondkind_exterior}
\end{equation}
Using the exact same derivation as in Section \ref{Decomposition of DtN} and Appendix~\ref{app:dtn_decomposition_details}, we can obtain equation \eqref{eq:0605_helmholtz_exterior}.

\section{Detailed Network Architectures and Hyperparameters} \label{app:hyperparameters}
To ensure that the performance improvements of PPDNO are derived from its structural inductive bias rather than an over-parameterized capacity, we maintain a comparable parameter count across all baseline models. Specifically, the \textbf{total memory footprint of trainable parameters of baseline models} is comparable to, or slightly larger than, that of our PPDNO model.

\subsection{Laplace Equation on Elliptical Domains} \label{app_laplace_e_parameters}
\begin{itemize}
    \item \textbf{DeepONet:} The branch network is configured with $4$ hidden layers of width $256$, and the trunk network uses $4$ hidden layers of width $256$, resulting in $723712$ parameters. \texttt{ReLU} activation functions are used.
    \item \textbf{FNO:} We employ a 1D-FNO with $4$ Fourier layers, a maximum of $9$ frequency modes, and a channel width of $100$. The total parameter count is $413857$. Note that most parameters in this model are complex numbers. \texttt{ReLU} activation functions are used.
    \item \textbf{OT:} The model consists of a Transformer encoder with $8$ layers to process the geometry and boundary data, and a Transformer decoder with $4$ layers for pointwise evaluation. We set the hidden dimension to $d_{\text{model}} = 64$, the feed-forward network dimension to $d_{\text{ff}} = 256$, and use $8$ attention heads. The total number of trainable parameters is $684146$. \texttt{ReLU} activation functions are used.
    \item \textbf{PPDNO-Direct:} The network architecture and hyperparameters for this ablation model are exactly identical to those of the standard PPDNO model, resulting in $646620$ trainable parameters.
    \item \textbf{PPDNO:} PPDNO utilizes $r=12$, with both $\mathrm{MLP}_{\boldsymbol{X}}$ and $\mathrm{MLP}_s$ comprising 4 hidden layers of width 144. The total number of trainable parameters is $646620$. \texttt{ReLU} activation functions are used.
\end{itemize}
All models are trained using the \texttt{Adam} optimizer with an initial learning rate of $1 \times 10^{-4}$ and a \texttt{ReduceLROnPlateau} scheduler.

\subsection{Laplace Equation on Fourier-Parameterized Domains} \label{app_laplace_f_parameters}
\begin{itemize}
    \item \textbf{DeepONet:} The branch network is configured with $4$ hidden layers of width $420$ and the trunk network uses $4$ hidden layers of width $420$, resulting in $ 1600292$ parameters. \texttt{ReLU} activation functions are used.
    \item \textbf{FNO:} We employ a 1D-FNO with $4$ Fourier layers, a maximum of $9$ frequency modes, and a channel width of $150$. The total parameter count is $920657$. Note that most parameters in this model are complex numbers. \texttt{ReLU} activation functions are used.
    \item \textbf{OT:} The model consists of a Transformer encoder with $8$ layers to process the geometry and boundary data, and a Transformer decoder with $4$ layers for pointwise evaluation. We set the hidden dimension to $d_{\text{model}} = 128$, the feed-forward network dimension to $d_{\text{ff}} = 256$, and use $8$ attention heads. The total number of trainable parameters is $1889025$. \texttt{ReLU} activation functions are used.
    \item \textbf{PPDNO-Direct:} The network architecture and hyperparameters for this ablation model are exactly identical to those of the standard PPDNO model, resulting in $1583376$ trainable parameters.
    \item \textbf{PPDNO:} PPDNO utilizes $r=16$, with both $\mathrm{MLP}_{\boldsymbol{X}}$ and $\mathrm{MLP}_s$ comprising 4 hidden layers of width 324. The total number of trainable parameters is $1583376$. \texttt{ReLU} activation functions are used.
\end{itemize}
All models are trained using the \texttt{Adam} optimizer with an initial learning rate of $1 \times 10^{-4}$ and a \texttt{ReduceLROnPlateau} scheduler.

\subsection{Exterior Helmholtz Equation on Rose Curves} \label{app_helmholtz_parameters}

\subsubsection{k = 1}
\begin{itemize}
    \item \textbf{FNO:} $n_{\max}=5$ for the harmonic encoding. We employ a 1D-FNO adapted for complex-valued inputs and outputs. To ensure architectural parity with PPDNO, the FNO utilizes real-valued weights for linear layers. The complex-valued input Dirichlet data is explicitly decomposed into its real and imaginary components, acting as independent input channels alongside the geometric coordinates and the identical harmonic trunk encoding $\gamma(s)$. After processing through the real-valued Fourier layers (where complex multiplications only occur organically during the spectral convolutions), the final linear layer outputs two real-valued channels, which are subsequently merged back into the complex domain using \texttt{torch.view\_as\_complex}. The network is configured with $4$ Fourier layers, a maximum of $10$ frequency modes, and a channel width of $150$. The total parameter count is $1012436$.
    \item \textbf{PPDNO-Direct:} The network architecture and hyperparameters for this ablation model are exactly identical to those of the standard PPDNO model, resulting in $1751988$ trainable parameters.
    \item \textbf{PPDNO:} PPDNO utilizes a rank $r=14$. The evaluation-point network $\mathrm{MLP}_s$ takes the harmonic encoding $\gamma(s)$ (with $n_{\max}=5$, resulting in a 10-dimensional input) and processes it through 4 hidden layers of width 196. The geometry-dependent network $\mathrm{MLP}_{\boldsymbol{X}}$ processes the flattened boundary coordinates through 4 hidden layers of width 196. The total number of trainable parameters is $1751988$. \texttt{ReLU} activation functions are used in both subnetworks.
\end{itemize}
All models are trained using the \texttt{Adam} optimizer. Given the highly oscillatory nature of the data, a slightly smaller initial learning rate of $5 \times 10^{-5}$ is employed, alongside a \texttt{ReduceLROnPlateau} scheduler (factor of $0.6$, patience of $100$) down to a minimum learning rate of $1 \times 10^{-7}$.

\subsubsection{k = 10}
\begin{itemize}
    \item \textbf{FNO:} $n_{\max}=10$ for the harmonic encoding. The network is configured with $4$ Fourier layers, a maximum of $10$ frequency modes, and a channel width of $240$. The total parameter count is $2572466$.
    \item \textbf{PPDNO-Direct:} The network architecture and hyperparameters for this ablation model are exactly identical to those of the standard PPDNO model, resulting in $4758592$ trainable parameters.
    \item \textbf{PPDNO:} PPDNO utilizes a rank $r=32$. The evaluation-point network $\mathrm{MLP}_s$ takes the harmonic encoding $\gamma(s)$ (with $n_{\max}=10$, resulting in a 20-dimensional input) and processes it through 4 hidden layers of width 256. The geometry-dependent network $\mathrm{MLP}_{\boldsymbol{X}}$ processes the flattened boundary coordinates through 4 hidden layers of width 256. The total number of trainable parameters is $4758592$. \texttt{ReLU} activation functions are used in both subnetworks.
\end{itemize}
All models are trained using the \texttt{Adam} optimizer. Given the highly oscillatory nature of the data, a slightly smaller initial learning rate of $5 \times 10^{-5}$ is employed, alongside a \texttt{ReduceLROnPlateau} scheduler (factor of $0.6$, patience of $100$) down to a minimum learning rate of $1 \times 10^{-7}$.

\section{Training Dynamics} \label{app:loss_curves}
To further illustrate the advantage of the analytic-neural decomposition, we compare the training dynamics of PPDNO against the DeepONet and FNO baselines.

\subsection{Laplace Equation on Elliptical Domains} \label{app_laplace_e}
Figure~\ref{fig_a1} plots the Mean Squared Error (MSE) training loss curves over epochs for the interior Laplace problem on elliptical domains. Note that due to the drastically different memory and computational complexities inherent to each architecture, the models were trained using different batch sizes and total epochs. This ensures that every baseline was given a substantial computational time budget, allowing them to reach a stable performance plateau. In fact, the actual wall-clock training times for DeepONet, PPDNO-Direct, and PPDNO are comparable, and all are significantly shorter than the exhaustive times required by FNO and OT.

\begin{figure}[htbp]
\centering
\begin{subfigure}[b]{0.32\textwidth}
    \centering
    \includegraphics[width=\textwidth]{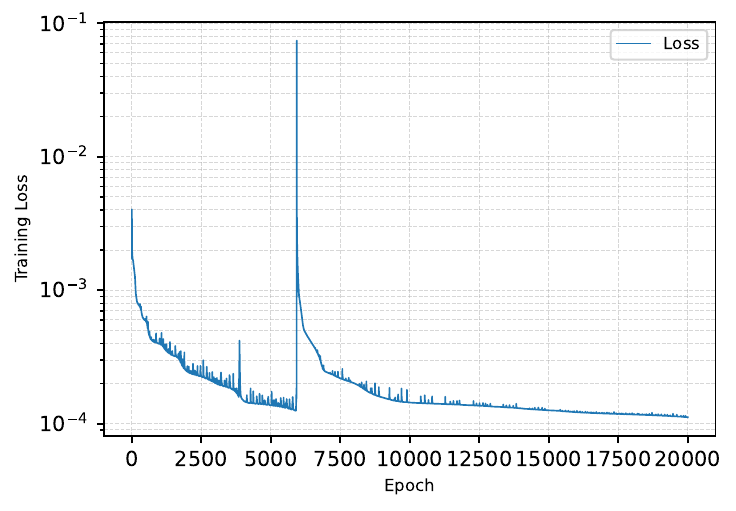}
    \caption{DeepONet}
\end{subfigure}
\hfill
\begin{subfigure}[b]{0.32\textwidth}
    \centering
    \includegraphics[width=\textwidth]{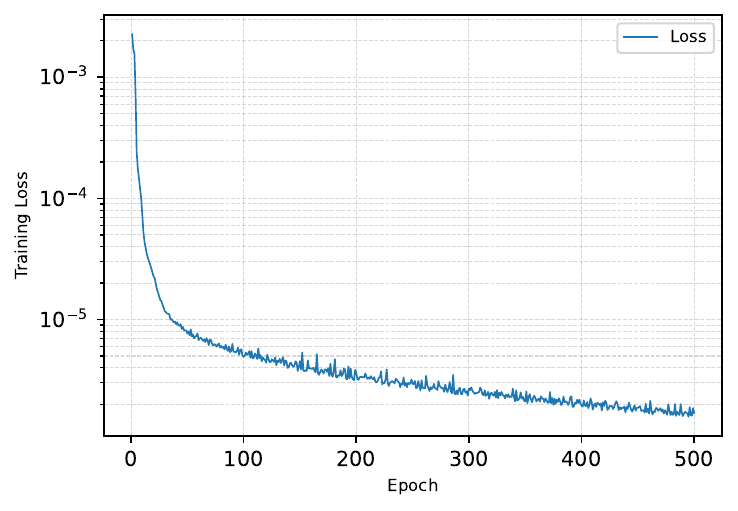}
    \caption{FNO}
\end{subfigure}
\hfill
\begin{subfigure}[b]{0.32\textwidth}
    \centering
    \includegraphics[width=\textwidth]{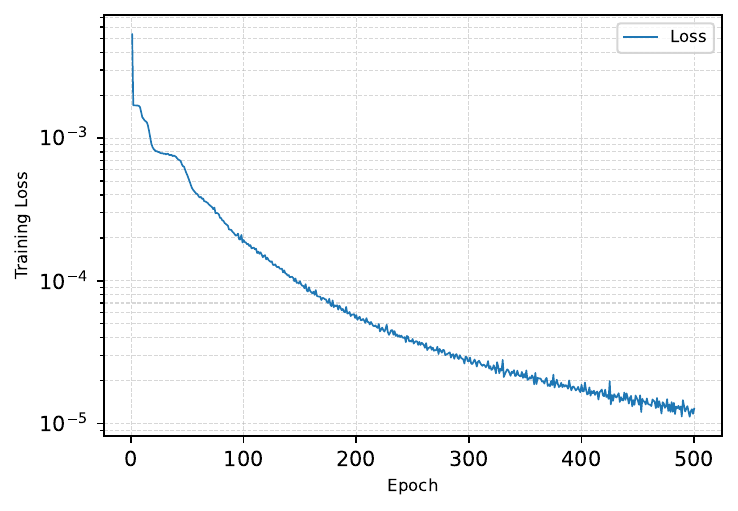}
    \caption{OT}
\end{subfigure}

\vspace{1em}
\begin{subfigure}[b]{0.32\textwidth}
    \centering
    \includegraphics[width=\textwidth]{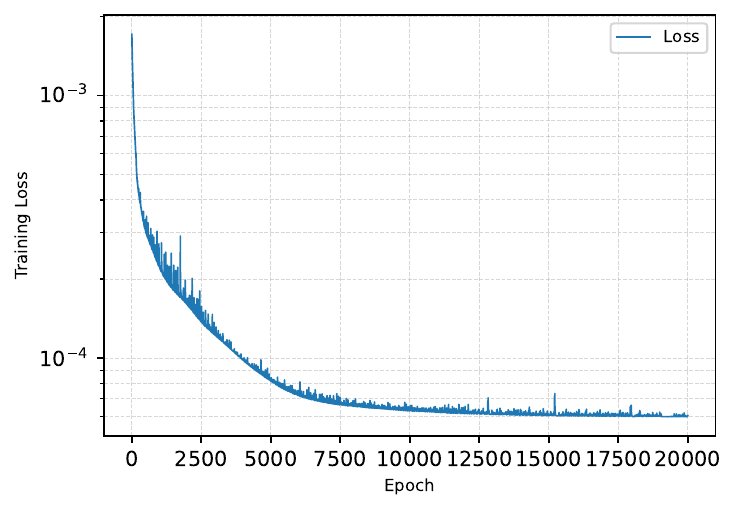}
    \caption{PPDNO-Direct}
\end{subfigure}
\hspace{0.05\textwidth}
\begin{subfigure}[b]{0.32\textwidth}
    \centering
    \includegraphics[width=\textwidth]{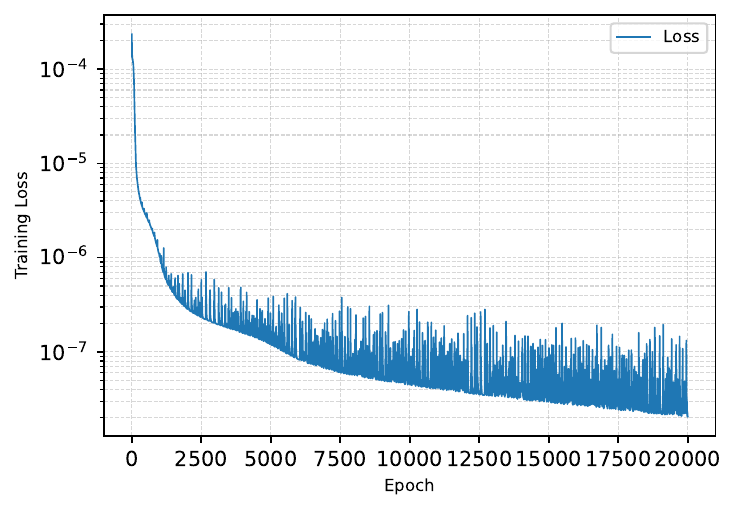}
    \caption{PPDNO}
\end{subfigure}
\caption{Training loss curves for various models (y-axis is in log scale). While FNO and OT are trained for fewer epochs due to hardware constraints, their dense mini-batch updates ensure extensive training that equals or exceeds the computational time of the other models.}
\label{fig_a1}
\end{figure}

\begin{itemize}
    \item \textbf{DeepONet:} Exhibits the highest level of instability, characterized by significant loss spikes and a relatively high final convergence plateau of approximately $1 \times 10^{-4}$. The total training time on a single \textbf{NVIDIA GeForce RTX 3080} GPU is approximately 6 minutes.

    \item \textbf{FNO:} Due to the substantial memory demand during forward and backward propagation—primarily caused by intermediate activations and gradient storage in the spectral convolution layers—mini-batch training with a batch size of 1000 was strictly necessary to fit within GPU limits. Consequently, the total number of epochs was capped at 500. However, since each epoch involves multiple gradient updates, the total computational effort and actual wall-clock training time greatly exceed those of the other models. FNO shows a smoother optimization trajectory but fails to penetrate the $10^{-6}$ threshold, eventually stagnating at $2 \times 10^{-6}$. The total training time on an \textbf{NVIDIA GeForce RTX 3080} is approximately 21 minutes.

    \item \textbf{OT:} Similarly, the Operator Transformer incurs massive memory and computational overhead due to the generation of dense self-attention matrices. To achieve stable and memory-efficient optimization, a much smaller batch size of 500 was employed. Although trained for 500 epochs, this process encompasses a massive number of iterative parameter updates, representing the longest actual training time among all baselines. As observed, the learning curve of OT gradually levels off, reaching a practical performance plateau at approximately $1 \times 10^{-5}$. The total training time on an \textbf{NVIDIA GeForce RTX 3080} is approximately 456 minutes (over 7.5 hours).

    \item \textbf{PPDNO-Direct:} The ablation model without principal-part decomposition demonstrates stable convergence but ultimately stagnates at $6 \times 10^{-5}$, constrained by its inability to cleanly resolve the order-one principal part of the full DtN map. The total training time on an \textbf{NVIDIA GeForce RTX 3080} is approximately 7 minutes.

    \item \textbf{PPDNO:} Demonstrates superior convergence speed and accuracy. Benefiting from the analytic subtraction of the principal part, it rapidly drops below $10^{-6}$ and reaches a final loss of $2 \times 10^{-8}$. Although the logarithmic scale visually amplifies the minor oscillations in the later stages, the absolute error remains consistently tightly bounded within the $10^{-6}$ to $10^{-8}$ range, significantly outperforming all purely data-driven baselines. The total training time on an \textbf{NVIDIA GeForce RTX 3080} is approximately 7 minutes.
\end{itemize}

\subsection{Laplace Equation on Fourier-Parameterized Domains} \label{app_laplace_f}
Figure~\ref{fig_a2} plots the Mean Squared Error (MSE) training loss curves over epochs for the interior Laplace problem on Fourier-parameterized domains. Note that due to the drastically different memory and computational complexities inherent to each architecture, the models were trained using different batch sizes and total epochs. This ensures that every baseline was given a substantial computational time budget, allowing them to reach a stable performance plateau. In fact, the actual wall-clock training times for DeepONet, PPDNO-Direct, and PPDNO are comparable, and all are significantly shorter than the exhaustive times required by FNO and OT.
\begin{figure}[htbp]
\centering
\begin{subfigure}[b]{0.32\textwidth}
    \centering
    \includegraphics[width=\textwidth]{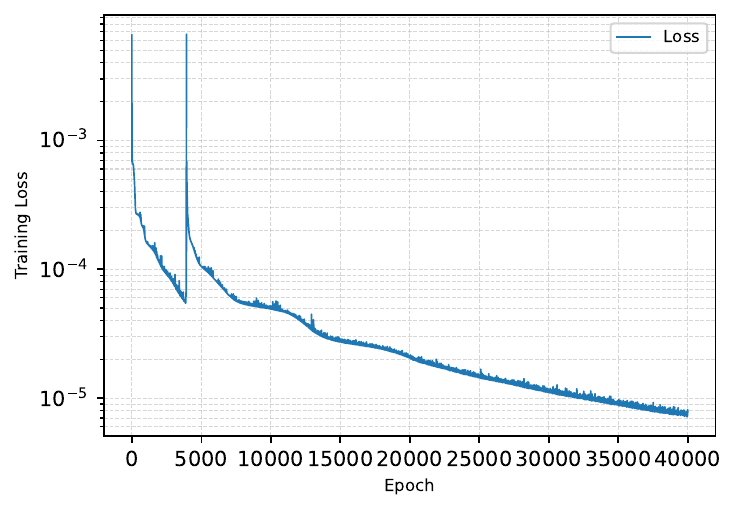}
    \caption{DeepONet}
\end{subfigure}
\hfill
\begin{subfigure}[b]{0.32\textwidth}
    \centering
    \includegraphics[width=\textwidth]{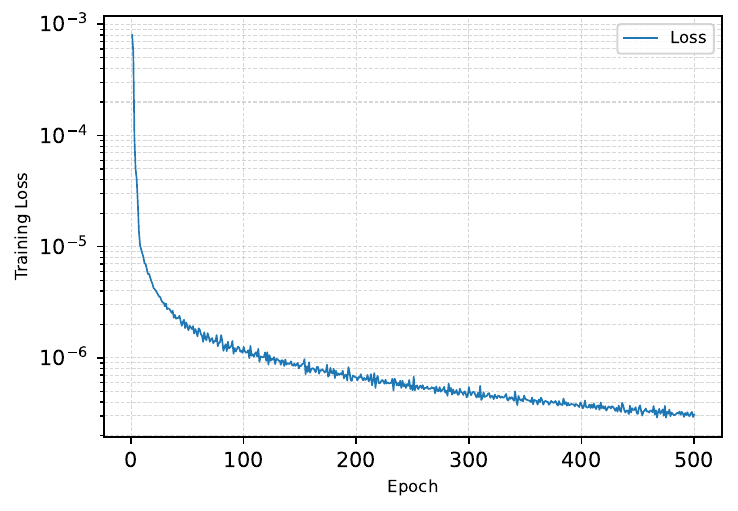}
    \caption{FNO}
\end{subfigure}
\hfill
\begin{subfigure}[b]{0.32\textwidth}
    \centering
    \includegraphics[width=\textwidth]{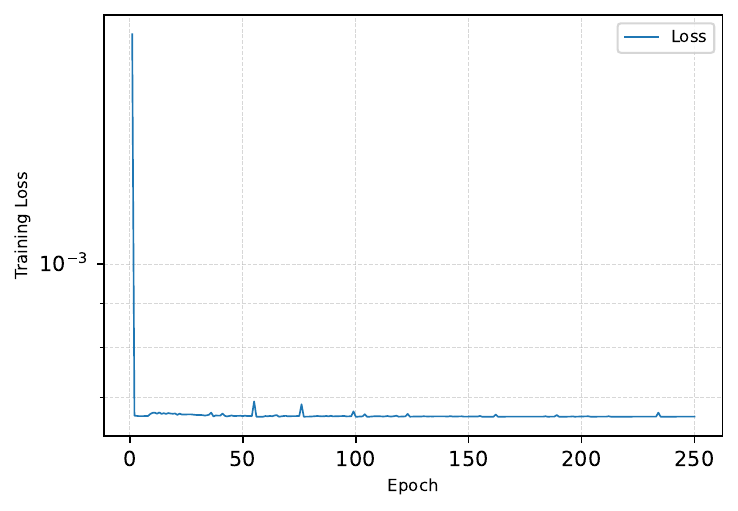}
    \caption{OT}
\end{subfigure}

\vspace{1em}
\begin{subfigure}[b]{0.32\textwidth}
    \centering
    \includegraphics[width=\textwidth]{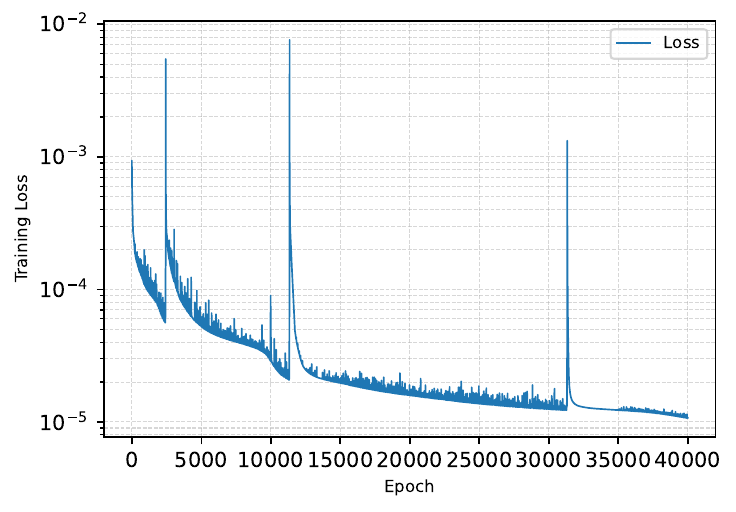}
    \caption{PPDNO-Direct}
\end{subfigure}
\hspace{0.05\textwidth}
\begin{subfigure}[b]{0.32\textwidth}
    \centering
    \includegraphics[width=\textwidth]{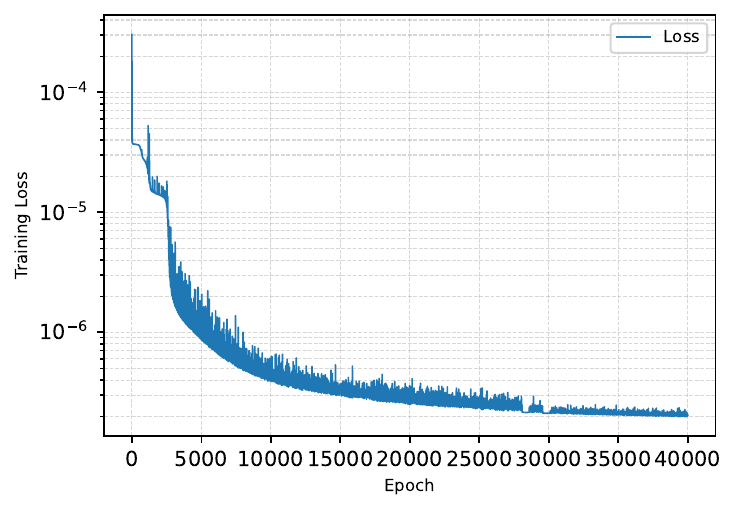}
    \caption{PPDNO}
\end{subfigure}
\caption{Training loss curves for various models (y-axis is in log scale). While FNO and OT are trained for fewer epochs due to hardware constraints, their dense mini-batch updates ensure extensive training that equals or exceeds the computational time of the other models.}
\label{fig_a2}
\end{figure}

\begin{itemize}
    \item \textbf{DeepONet:} Final loss $\sim7 \times 10^{-6}$; training time $\sim 31$ min on \textbf{NVIDIA GeForce RTX 3080}.

    \item \textbf{FNO:} Batch size $1000$; stagnates at $5 \times 10^{-7}$; training time $\sim 62$ min on \textbf{NVIDIA GeForce RTX 3080}.

    \item \textbf{OT:} Batch size $250$; stagnates at $7 \times 10^{-4}$; training time $\sim 547$ min on \textbf{NVIDIA GeForce RTX 3080}.

    \item \textbf{PPDNO-Direct:} Final loss $\sim 1 \times 10^{-5}$; training time $\sim 37$ min on \textbf{NVIDIA GeForce RTX 3080}.

    \item \textbf{PPDNO:} Final loss $\sim 2 \times 10^{-7}$; training time $\sim 38$ min on \textbf{NVIDIA GeForce RTX 3080}.
\end{itemize}

\subsection{Exterior Helmholtz Equation on Rose Curves} \label{app_helmholtz_loss}
Note that due to the drastically different memory and computational complexities inherent to each architecture, the models were trained using different batch sizes and total epochs. This ensures that every baseline was given a substantial computational time budget, allowing them to reach a stable performance plateau.

\subsubsection{k = 1}
Figure~\ref{fig_a3} plots the training loss curves over epochs for the exterior Helmholtz problem on Rose Curve domains. Note that the loss here is defined as the Mean Squared Error over the complex plane (i.e., the sum of squared errors of the real and imaginary parts).

\begin{figure}[htbp]
\centering
\begin{subfigure}[b]{0.32\textwidth}
    \centering
    \includegraphics[width=\textwidth]{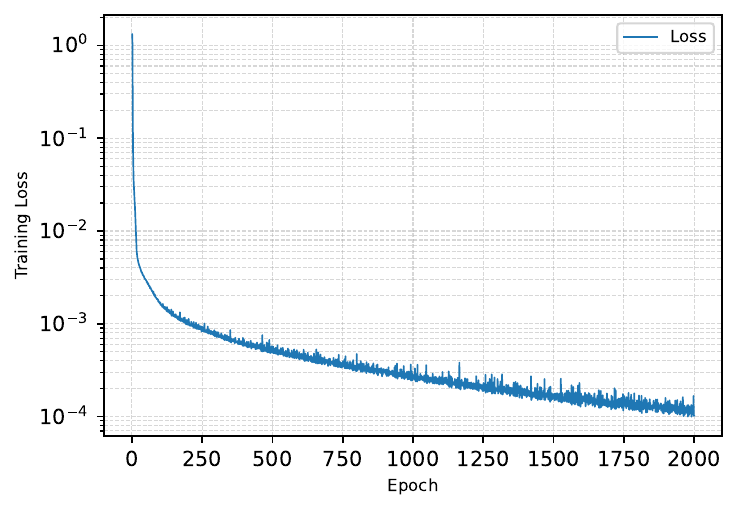}
    \caption{FNO}
\end{subfigure}
\hfill
\begin{subfigure}[b]{0.32\textwidth}
    \centering
    \includegraphics[width=\textwidth]{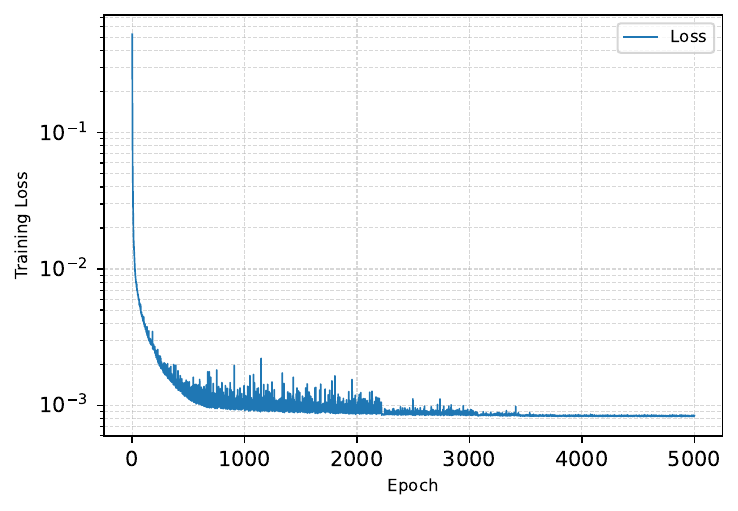}
    \caption{PPDNO-Direct}
\end{subfigure}
\hfill
\begin{subfigure}[b]{0.32\textwidth}
    \centering
    \includegraphics[width=\textwidth]{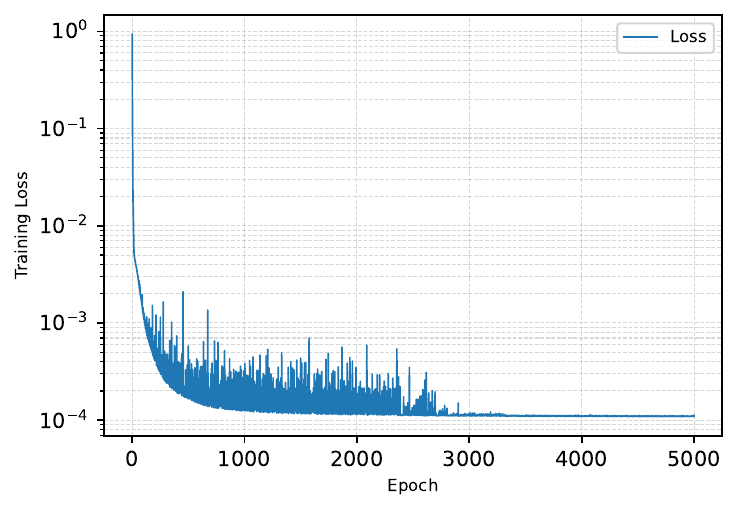}
    \caption{PPDNO}
\end{subfigure}
\caption{Complex MSE training loss curves for the exterior Helmholtz problem ($k=1$) on Rose Curves (y-axis is in log scale).}
\label{fig_a3}
\end{figure}

\begin{itemize}
    \item \textbf{FNO:} Batch size $1024$; final loss $1 \times 10^{-4}$; training time $\sim 128$ min on \textbf{NVIDIA GeForce RTX 3080}.

    \item \textbf{PPDNO-Direct:} Batch size $1024$; final loss $\sim 6 \times 10^{-4}$; training time $\sim 67$ min on \textbf{NVIDIA GeForce RTX 3080}.

    \item \textbf{PPDNO:} Batch size $1024$; final loss $\sim 1 \times 10^{-4}$; training time $\sim 67$ min on \textbf{NVIDIA GeForce RTX 3080}.
\end{itemize}

\subsubsection{k = 10}
Figure~\ref{fig_a4} plots the training loss curves over epochs for the exterior Helmholtz problem on Rose Curve domains.

\begin{figure}[htbp]
\centering
\begin{subfigure}[b]{0.32\textwidth}
    \centering
    \includegraphics[width=\textwidth]{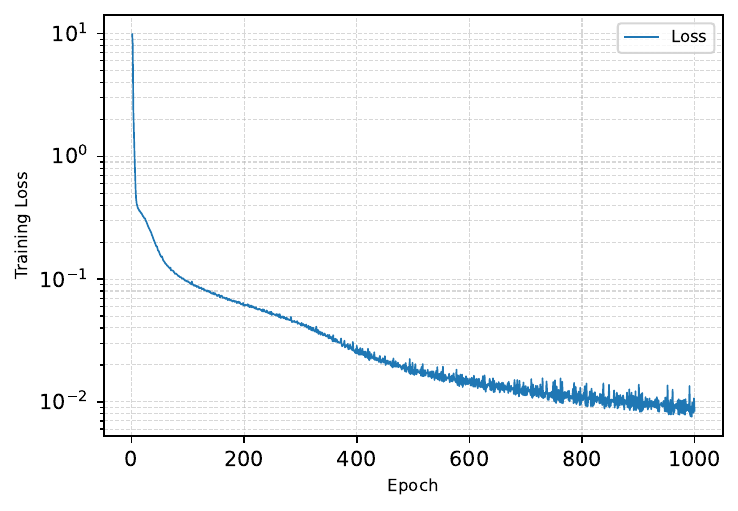}
    \caption{FNO}
\end{subfigure}
\hfill
\begin{subfigure}[b]{0.32\textwidth}
    \centering
    \includegraphics[width=\textwidth]{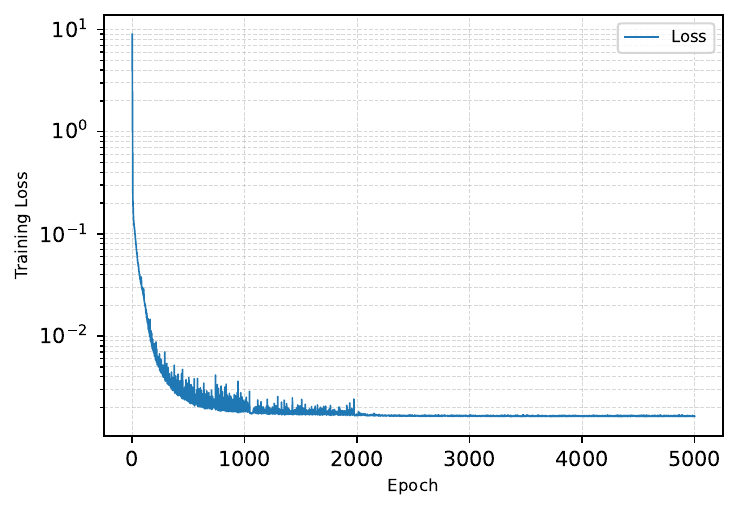}
    \caption{PPDNO-Direct}
\end{subfigure}
\hfill
\begin{subfigure}[b]{0.32\textwidth}
    \centering
    \includegraphics[width=\textwidth]{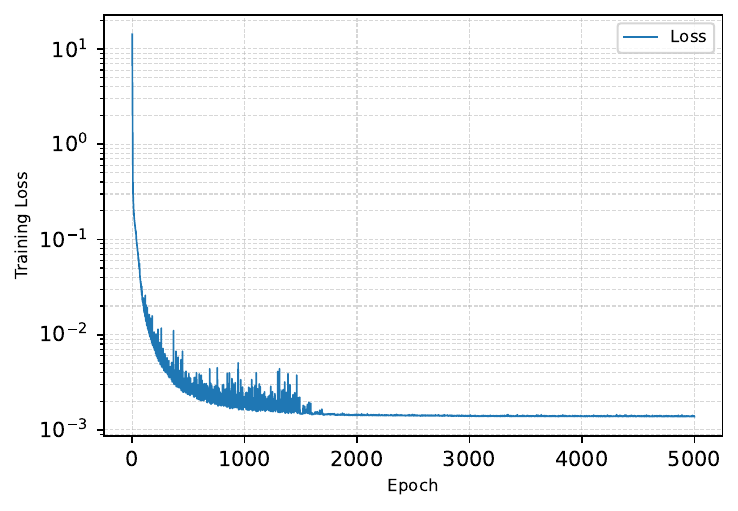}
    \caption{PPDNO}
\end{subfigure}
\caption{Complex MSE training loss curves for the exterior Helmholtz problem ($k=10$) on Rose Curves (y-axis is in log scale).}
\label{fig_a4}
\end{figure}

\begin{itemize}
    \item \textbf{FNO:} Batch size $1024$; final loss $6 \times 10^{-3}$; training time $\sim 95$ min on \textbf{NVIDIA GeForce RTX 3080}.

    \item \textbf{PPDNO-Direct:} Batch size $1024$; final loss $\sim 1 \times 10^{-3}$; training time $\sim 69$ min on \textbf{NVIDIA GeForce RTX 3080}.

    \item \textbf{PPDNO:} Batch size $1024$; final loss $\sim 1 \times 10^{-3}$; training time $\sim 70$ min on \textbf{NVIDIA GeForce RTX 3080}.
\end{itemize}

\section{Additional Experimental Details}

\subsection{Inference Benchmark Protocol} \label{app:benchmark_protocol}
For the latency results in Table~\ref{tab:laplace_ellipse}, all models are profiled on a single NVIDIA GeForce RTX 3080 GPU. The reported time is end-to-end inference for the full $10{,}000$-sample test set, including neural-network forward passes, the FFT-based principal-part computation for PPDNO, and any required post-processing.

DeepONet, PPDNO-Direct, and PPDNO are evaluated in one full batch of $10{,}000$ samples because their memory use scales mildly with the number of evaluation functions. By contrast, FNO and OT have much larger activation footprints: FNO stores intermediate spectral-convolution features over the boundary grid, while OT forms attention tensors over boundary tokens. A full $10{,}000$-sample batch is therefore either infeasible or substantially less efficient for these architectures on the same GPU. We evaluate FNO and OT with mini-batches of size $64$ to avoid memory pressure and keep GPU utilization stable, and accumulate the serial mini-batch times to obtain the total latency for all $10{,}000$ samples. Thus the table reports the wall-clock cost of processing the same test set, not the latency of a single mini-batch.

We use $50$ warmup runs for full-batch models and $10$ warmup runs for mini-batch models, then report the median over $500$ and $50$ timed runs, respectively. A \texttt{torch.cuda.synchronize()} call is made before each timestamp to avoid asynchronous CUDA timing bias.

\subsection{Additional Laplace Specific-Case Visualizations} \label{app:specific_case_visualizations}
This appendix provides the pointwise prediction plots for the Laplace specific test cases discussed in Section~\ref{Laplace Equation on Elliptical Domains} and Section~\ref{Laplace Fourier}. These visualizations complement the quantitative errors reported in Tables~\ref{tab:ood_cases} and~\ref{tab_laplace_fourier}.

\begin{figure}[htbp]
\centering
\begin{subfigure}[b]{0.45\textwidth}
    \centering
    \includegraphics[width=\textwidth]{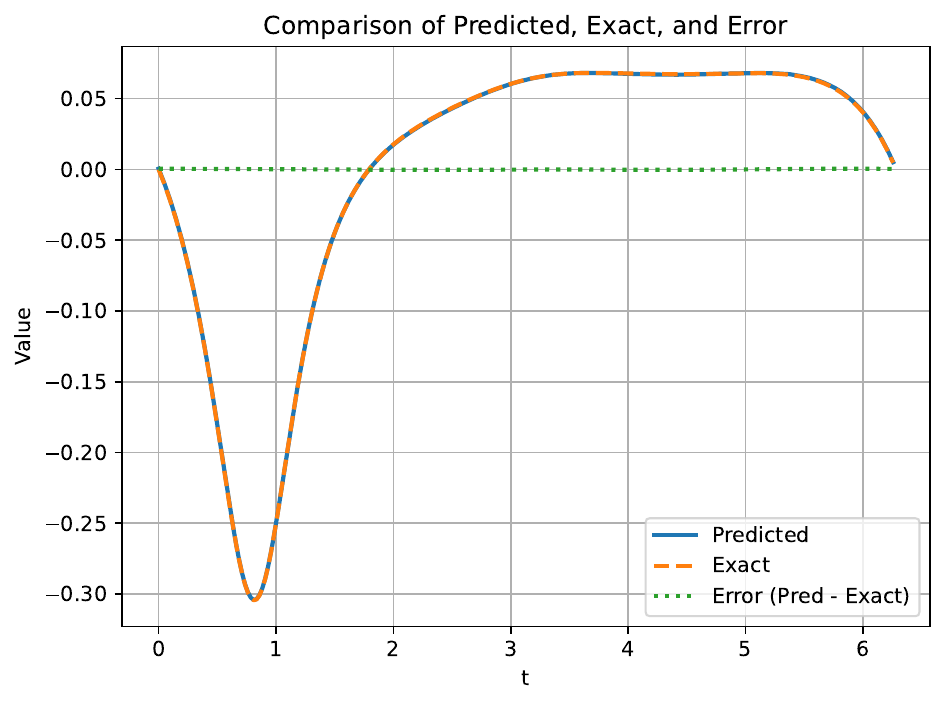}
    \caption{Case 1}
\end{subfigure}
\hfill
\begin{subfigure}[b]{0.45\textwidth}
    \centering
    \includegraphics[width=\textwidth]{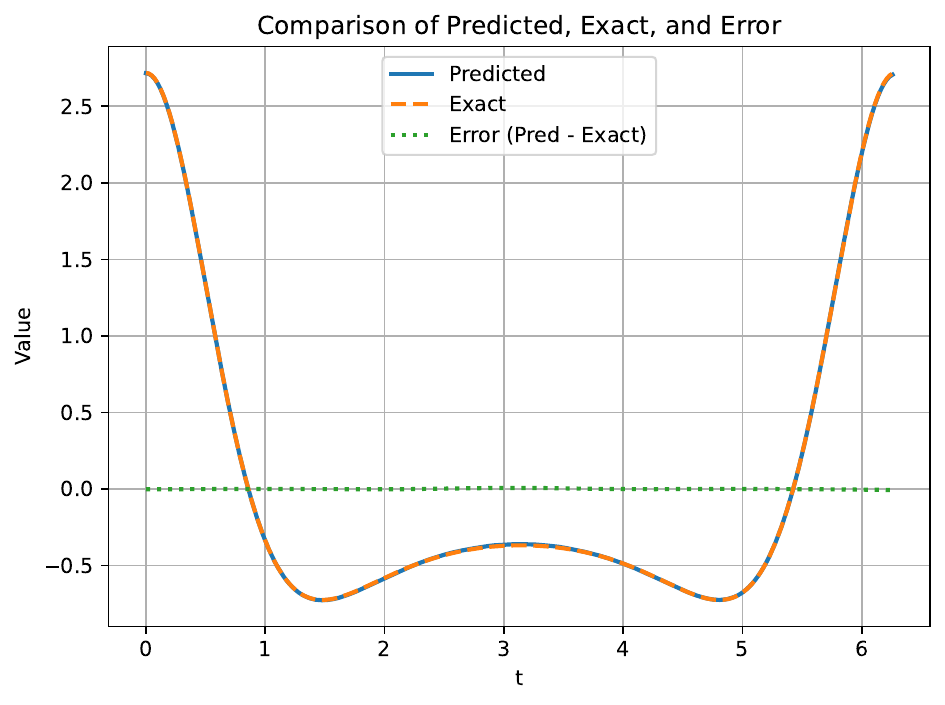}
    \caption{Case 2}
\end{subfigure}
\caption{Prediction of PPDNO versus the exact Neumann trace on the test ellipse.}
\label{fig_laplace_e}
\end{figure}

\begin{figure}[htbp]
\centering
\begin{subfigure}[b]{0.32\textwidth}
    \centering
    \includegraphics[width=\textwidth]{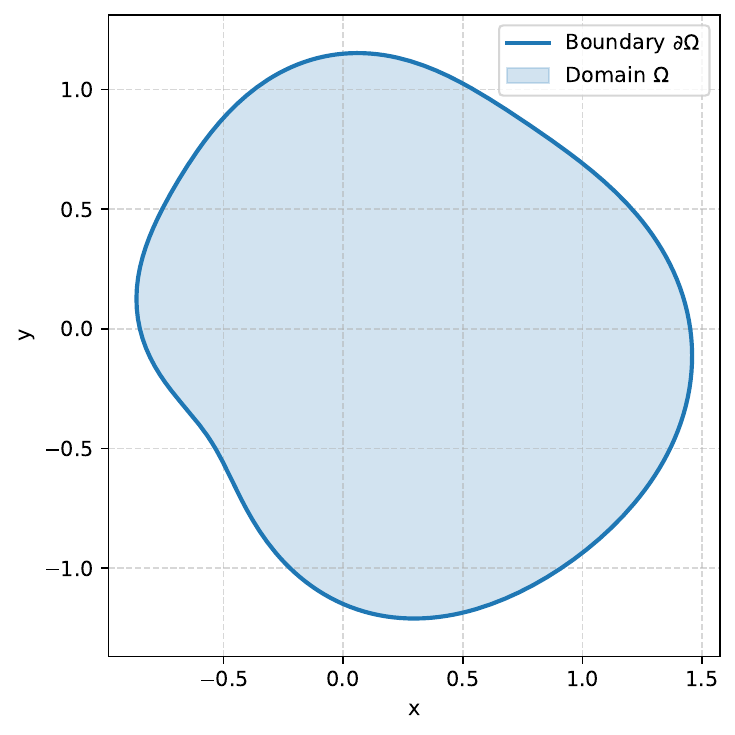}
    \caption{Domain $\Omega$}
\end{subfigure}
\hfill
\begin{subfigure}[b]{0.32\textwidth}
    \centering
    \includegraphics[width=\textwidth]{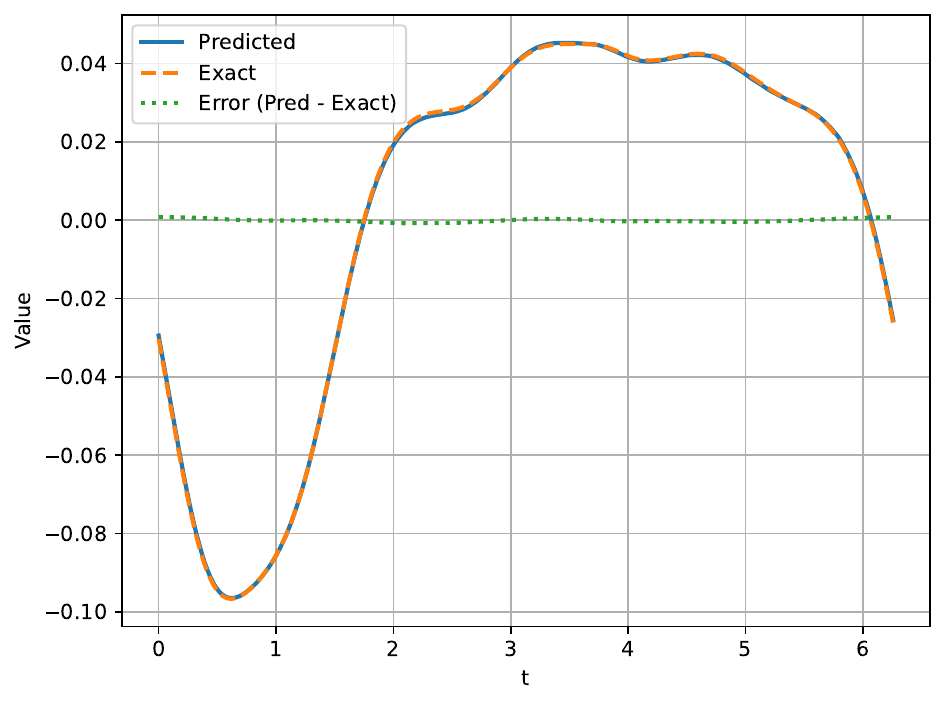}
    \caption{Case 1: Point Source}
\end{subfigure}
\hfill
\begin{subfigure}[b]{0.32\textwidth}
    \centering
    \includegraphics[width=\textwidth]{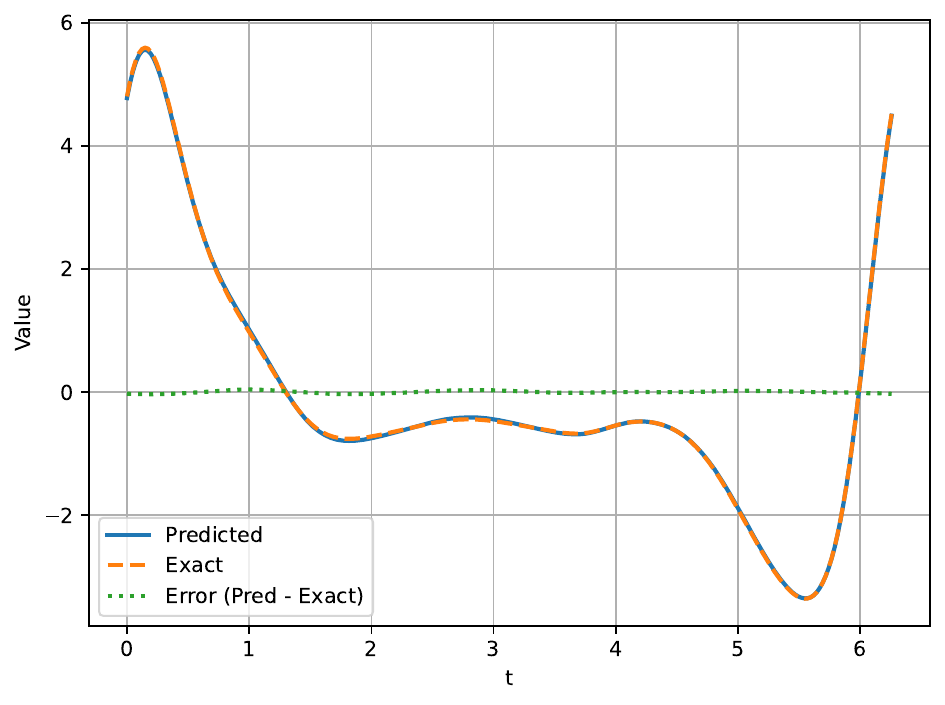}
    \caption{Case 2: $e^x(\sin y+\cos y)$}
\end{subfigure}
\caption{Left: the specific Fourier domain. Center and right: pointwise prediction of PPDNO versus the exact Neumann trace for the two test cases.}
\label{fig_laplace_fourier}
\end{figure}

\subsection{Detailed Helmholtz OOD Errors} \label{app:helmholtz_ood_details}
Table~\ref{tab:helmholtz_ood_cases} reports the real- and imaginary-component errors for the specific exterior Helmholtz test cases discussed in Section~\ref{sec:helmholtz_exterior}.

\begin{table}[htbp]
\centering
\caption{Relative errors on the specific test cases for the exterior Helmholtz equation.}
\label{tab:helmholtz_ood_cases}
\textbf{(a) $\mathbf{k=1}$}
\vspace{1em}
\resizebox{\textwidth}{!}{
\begin{tabular}{lcccccccc}
\toprule
\multirow{3}{*}{\textbf{Method}}
& \multicolumn{4}{c}{\textbf{Case 1: $H_0^{(1)}$ Point Source}}
& \multicolumn{4}{c}{\textbf{Case 2: $H_1^{(1)}(kr)e^{i\theta}$ (OOD)}} \\
\cmidrule(lr){2-5} \cmidrule(lr){6-9}
& \multicolumn{2}{c}{\textbf{Real Part}}
& \multicolumn{2}{c}{\textbf{Imaginary Part}}
& \multicolumn{2}{c}{\textbf{Real Part}}
& \multicolumn{2}{c}{\textbf{Imaginary Part}} \\
\cmidrule(lr){2-3} \cmidrule(lr){4-5}
\cmidrule(lr){6-7} \cmidrule(lr){8-9}
& Rel. $L^2$ & Rel. Max
& Rel. $L^2$ & Rel. Max
& Rel. $L^2$ & Rel. Max
& Rel. $L^2$ & Rel. Max \\
\midrule
FNO
& $1.68 \times 10^{-2}$ & $4.14 \times 10^{-2}$
& $1.22 \times 10^{-2}$ & $2.70 \times 10^{-2}$
& $1.09 \times 10^{0}$ & $1.58 \times 10^{0}$
& $1.29 \times 10^{0}$ & $1.27 \times 10^{0}$ \\
FFT-only
& $1.09 \times 10^{0}$ & $1.23 \times 10^{0}$
& $1.28 \times 10^{0}$ & $1.78 \times 10^{0}$
& $1.01 \times 10^{0}$ & $8.43 \times 10^{-1}$
& $1.01 \times 10^{0}$ & $8.50 \times 10^{-1}$ \\
PPDNO-Direct
& $3.14 \times 10^{-4}$ & $7.33 \times 10^{-4}$
& $1.53 \times 10^{-3}$ & $3.64 \times 10^{-3}$
& $6.79 \times 10^{-3}$ & $9.28 \times 10^{-3}$
& $6.45 \times 10^{-3}$ & $7.50 \times 10^{-3}$ \\
\textbf{PPDNO}
& $\mathbf{2.12 \times 10^{-4}}$ & $\mathbf{4.38 \times 10^{-4}}$
& $\mathbf{2.86 \times 10^{-4}}$ & $\mathbf{6.63 \times 10^{-4}}$
& $\mathbf{3.34 \times 10^{-3}}$ & $\mathbf{3.78 \times 10^{-3}}$
& $\mathbf{3.33 \times 10^{-3}}$ & $\mathbf{3.43 \times 10^{-3}}$ \\
\bottomrule
\end{tabular}
}
\textbf{(b) $\mathbf{k=10}$}
\resizebox{\textwidth}{!}{
\begin{tabular}{lcccccccc}
\toprule
\multirow{3}{*}{\textbf{Method}}
& \multicolumn{4}{c}{\textbf{Case 1: $H_0^{(1)}$ Point Source}}
& \multicolumn{4}{c}{\textbf{Case 2: $H_1^{(1)}(kr)e^{i\theta}$ (OOD)}} \\
\cmidrule(lr){2-5} \cmidrule(lr){6-9}
& \multicolumn{2}{c}{\textbf{Real Part}}
& \multicolumn{2}{c}{\textbf{Imaginary Part}}
& \multicolumn{2}{c}{\textbf{Real Part}}
& \multicolumn{2}{c}{\textbf{Imaginary Part}} \\
\cmidrule(lr){2-3} \cmidrule(lr){4-5}
\cmidrule(lr){6-7} \cmidrule(lr){8-9}
& Rel. $L^2$ & Rel. Max
& Rel. $L^2$ & Rel. Max
& Rel. $L^2$ & Rel. Max
& Rel. $L^2$ & Rel. Max \\
\midrule
FNO
& $7.66 \times 10^{-2}$ & $1.21 \times 10^{-1}$
& $4.42 \times 10^{-2}$ & $1.19 \times 10^{-1}$
& $2.72 \times 10^{-2}$ & $4.58 \times 10^{-2}$
& $2.40 \times 10^{-2}$ & $4.58 \times 10^{-2}$ \\
FFT-only
& $1.44 \times 10^{0}$ & $1.39 \times 10^{0}$
& $1.03 \times 10^{0}$ & $1.38 \times 10^{0}$
& $1.16 \times 10^{0}$ & $1.11 \times 10^{0}$
& $1.16 \times 10^{0}$ & $1.13 \times 10^{0}$ \\
PPDNO-Direct
& $1.44 \times 10^{-3}$ & $2.59 \times 10^{-3}$
& $9.93 \times 10^{-4}$ & $1.85 \times 10^{-3}$
& $\mathbf{1.40 \times 10^{-3}}$ & $\mathbf{1.82 \times 10^{-3}}$
& $\mathbf{1.95 \times 10^{-3}}$ & $\mathbf{3.52 \times 10^{-3}}$ \\
\textbf{PPDNO}
& $\mathbf{1.16 \times 10^{-3}}$ & $\mathbf{2.08 \times 10^{-3}}$
& $\mathbf{5.68 \times 10^{-4}}$ & $\mathbf{1.05 \times 10^{-3}}$
& $1.96 \times 10^{-3}$ & $2.22 \times 10^{-3}$
& $2.61 \times 10^{-3}$ & $3.68 \times 10^{-3}$ \\
\bottomrule
\end{tabular}
}
\end{table}

\subsection{Results for Zero-Shot Super-Resolution} \label{app_resolution}

In this section, we evaluate the grid-independence of PPDNO under dyadically refined output resolutions while keeping the input grid fixed at $N_{\text{in}} = 256$. This setup verifies the model's capability to predict boundary responses on finer evaluation grids without retraining.

For the analytic pathway, the super-resolution of the universal principal part $2\mathcal{W}_0$ is achieved exactly through frequency-domain zero-padding. Specifically, we apply a 1D FFT to the 256-point input $f$ to obtain its spectrum. We then insert $N_{\text{out}} - N_{\text{in}}$ zeros strictly at the high-frequency center of the spectrum array (the Nyquist frequency region between the positive and negative halves). After applying the target Fourier multiplier $\hat{\mathcal{W}}_0(k) = |k|/2$, an Inverse FFT of size $N_{\text{out}}$ is performed. The final spatial trace is scaled by $N_{\text{out}}/N_{\text{in}}$ to maintain amplitude consistency under standard FFT normalization. According to the Shannon-Nyquist theorem, this procedure yields a mathematically exact, lossless trigonometric interpolation for smooth boundary data on the torus $\mathbb{T}$.

Concurrently, the neural residual pathway operates seamlessly under refinement. Since the boundary geometry $X$ and input data $f$ remain bound to the training resolution $N_{\text{in}} = 256$, the geometry network $MLP_X$ processes the inputs exactly as it did during training, avoiding any out-of-distribution domain shift or quadrature weight mismatch. Meanwhile, the evaluation-point network $MLP_s$ acts as a continuous implicit coordinate mapping. To predict at a finer resolution, we simply query $MLP_s$ with a uniformly spaced coordinate tensor of size $N_{\text{out}}$. The resulting high-resolution basis vectors are then contracted with the branch coefficients to form the smooth residual correction.

Table~\ref{tab:super_res_total} summarizes the relative $L_2$ errors across all four PDE settings. PPDNO exhibits exceptional stability across all grid refinements, validating the structural robustness of our analytic-neural decomposition.

\begin{table}[htbp]
\centering
\small
\caption{Relative $L_2$ errors of zero-shot super-resolution on specific test cases. The input resolution for geometry $X$ and boundary data $f$ is fixed at $256$, while the output evaluation resolution $N$ varies under power-of-two refinements from $256$ to $2048$. For the complex-valued predictions, the relative $L_2$ error is computed via the standard complex modulus, naturally incorporating both real and imaginary components simultaneously.}
\label{tab:super_res_total}
\begin{tabular}{llccccc}
\toprule
\textbf{Setting} & \textbf{Specific Case} & \textbf{Method} & \textbf{$N=256$} & \textbf{$N=512$} & \textbf{$N=1024$} & \textbf{$N=2048$} \\
\midrule
\multirow{6}{*}{Laplace (Ellipse)} & \multirow{3}{*}{Point Source} & FFT-only & 1.72e-1 & 1.72e-1 & 1.72e-1 & 1.72e-1 \\
 & & PPDNO-Direct & 7.28e-2 & 7.28e-2 & 7.28e-2 & 7.28e-2 \\
 & & \textbf{PPDNO} & \textbf{2.72e-3} & \textbf{2.72e-3} & \textbf{2.72e-3} & \textbf{2.72e-3} \\
\cmidrule{2-7}
 & \multirow{3}{*}{$e^x \cos y$} & FFT-only & 1.69e-1 & 1.69e-1 & 1.69e-1 & 1.69e-1 \\
 & & PPDNO-Direct & 1.09e-2 & 1.11e-2 & 1.11e-2 & 1.12e-2 \\
 & & \textbf{PPDNO} & \textbf{2.83e-3} & \textbf{2.84e-3} & \textbf{2.85e-3} & \textbf{2.85e-3} \\
\midrule
\multirow{6}{*}{Laplace (Fourier)} & \multirow{3}{*}{Point Source} & FFT-only & 1.56e-1 & 1.56e-1 & 1.56e-1 & 1.56e-1 \\
 & & PPDNO-Direct & 2.99e-2 & 3.00e-2 & 3.00e-2 & 3.00e-2 \\
 & & \textbf{PPDNO} & \textbf{8.64e-3} & \textbf{8.65e-3} & \textbf{8.65e-3} & \textbf{8.65e-3} \\
\cmidrule{2-7}
 & \multirow{3}{*}{$e^x(\sin y + \cos y)$} & FFT-only & 1.07e-1 & 1.07e-1 & 1.07e-1 & 1.07e-1 \\
 & & PPDNO-Direct & 6.03e-2 & 6.04e-2 & 6.04e-2 & 6.04e-2 \\
 & & \textbf{PPDNO} & \textbf{9.94e-3} & \textbf{9.94e-3} & \textbf{9.94e-3} & \textbf{9.94e-3} \\
\midrule
\multirow{6}{*}{Helmholtz ($k=1$)} & \multirow{3}{*}{Point Source} & FFT-only & 1.14e0 & 1.14e0 & 1.14e0 & 1.14e0 \\
 & & PPDNO-Direct & 7.93e-4 & 2.03e-3 & 2.00e-3 & 2.00e-3 \\
 & & \textbf{PPDNO} & \textbf{2.31e-4} & \textbf{1.34e-3} & \textbf{1.34e-3} & \textbf{1.34e-3} \\
\cmidrule{2-7}
 & \multirow{3}{*}{Cylindrical Wave} & FFT-only & 1.01e0 & 1.14e0 & 1.14e0 & 1.14e0 \\
 & & PPDNO-Direct & 6.62e-3 & 7.24e-3 & 7.24e-3 & 7.24e-3 \\
 & & \textbf{PPDNO} & \textbf{3.34e-3} & \textbf{4.01e-3} & \textbf{4.05e-3} & \textbf{4.05e-3} \\
\midrule
\multirow{6}{*}{Helmholtz ($k=10$)} & \multirow{3}{*}{Point Source} & FFT-only & 1.16e0 & 1.16e0 & 1.16e0 & 1.16e0 \\
 & & PPDNO-Direct & 1.13e-3 & \textbf{3.96e-3} & \textbf{3.94e-3} & \textbf{3.95e-3} \\
 & & \textbf{PPDNO} & \textbf{7.74e-4} & 5.89e-3 & 5.75e-3 & 5.77e-3 \\
\cmidrule{2-7}
 & \multirow{3}{*}{Cylindrical Wave} & FFT-only & 1.16e0 & 1.16e0 & 1.16e0 & 1.16e0 \\
 & & PPDNO-Direct & \textbf{1.70e-3} & \textbf{4.92e-3} & \textbf{4.95e-3} & \textbf{4.96e-3} \\
 & & \textbf{PPDNO} & 2.31e-3 & 7.85e-3 & 7.76e-3 & 7.79e-3 \\
\bottomrule
\end{tabular}
\end{table}

Unlike the Laplace equation which requires no specialized coordinate encoding, the Helmholtz equation explicitly introduces a harmonic feature encoding structure to resolve high-frequency wave oscillations. Under the bounded capacity of the MLP decoder, this high-frequency encoding structure inevitably induces a subtle representation residual when interpolating at unseen interstitial coordinates during grid refinement ($N_{\text{out}} > 256$). Crucially, because the network weights are frozen post-training, this continuous mapping remains strictly invariant; further refining the grid to 512, 1024, or 2048 merely increases the sampling density over the same learned representation, thereby producing a perfectly stable error plateau.

\end{document}